\newtheorem{thm}{Theorem}[section]
\newtheorem{cor}[thm]{Corollary}
\newtheorem{lem}[thm]{Lemma}
\newtheorem{prop}[thm]{Proposition}
\theoremstyle{definition}
\newtheorem{defn}[thm]{Definition}
\newtheorem{assumption}[thm]{Assumption}
\theoremstyle{remark}
\newtheorem{rem}[thm]{Remark}
\numberwithin{equation}{section}
\newcommand{\Bhat}{\hat{B}}
\begin{document}

\title[]
 {Existence, Lifespan and Transfer Rate of Ricci Flows on Manifolds with Small Ricci Curvature}

\author{Fei He}

\address{School of Mathematics, University of Minnesota, Twin Cities}

\email{hexxx221@umn.edu}

\thanks{}

\thanks{}

\subjclass{}

\keywords{}

\date{}

\dedicatory{}

\commby{}


\begin{abstract}
We show that in dimension 4 and above, the lifespan of Ricci flows depends on the relative smallness of the Ricci curvature compared to the Riemann curvature on the initial manifold. We can generalize this lifespan estimate to the local Ricci flow, using which we prove the short-time existence of Ricci flow solutions on noncompact Riemannian manifolds with at most quadratic curvature growth, whose Ricci curvature and its first two derivatives are sufficiently small in regions where the Riemann curvature is large. Those Ricci flow solutions may have unbounded curvature. Moreover, our method implies that, under some appropriate assumptions, the spatial transfer rate (the rate at which high curvature regions affect low curvature regions) of the Ricci flow resembles that of the heat equation.
\end{abstract}

\maketitle

\section*{Introduction}
In the analysis of various parabolic geometric flows, estimating the lifespan is a fundamental problem which is closely related to the existence and regularity theories. The well-known Ricci flow equation
\begin{equation}\label{equation: ricci flow}
\frac{\partial}{\partial t} g = -2 Ric
\end{equation}
has been extensively studied. The initial value problem of Ricci flow can be solved on complete Riemannian manifolds with bounded curvature, and the solution can extend to larger time intervals as long as the curvature remains bounded. Therefore, to estimate the lifespan of the Ricci flow, one only need to control the curvature. The Riemann curvature tensor $Rm$ evolves by a second order parabolic equation under the Ricci flow. When the underlying manifold is compact, the maximum principle implies the so called doubling-time estimate (Lemma \ref{lemma: doubling time estimate of Hamilton}), which states that the spatial supremum of $|Rm|$, where $|\cdot|$ denotes the norm, will not double in a short time interval, whose length is reciprocal to the supremum of $|Rm|$ on the initial manifold.

The doubling-time lower bound gives an effective estimate of the lifespan of Ricci flows. It is actually sharp in form for the shrinking sphere example: $g(t) = (1- 2(n-1)t) g_{ \mathbb{S}^n}$ which becomes a point at $t=\frac{1}{2(n-1)}$. However, this estimate is not sharp when the Ricci curvature is small relative to $Rm$. Heuristically, if the initial manifold has vanishing Ricci curvature, then the Ricci flow always has a trivial long time solution, no matter how large the curvature is. Starting from a sufficiently small perturbation of the Ricci-flat metric, we shall still expect the solution to have a relatively long lifespan, which the doubling time estimate fails to predict. To deal with this situation, we prove the following version of doubling-time estimate, which features the role played by the Ricci curvature.

\begin{thm}\label{theorem: improved doubling time estimate}
Let $(M,g)$ be a compact Riemannian manifold with dimension $n$. Suppose there are constants $K >0$ and $A \geq 0$, such that $\sup_M |Rm| \leq K$, and $\sup_M |\nabla^m Ric | \leq (n-1) K^{1+\frac{m}{2}} e^{-AK}$ for $m=0,1,2$. Let $T_d$ be the first time that $\sup_M|Rm|$ becomes $2K$ under the Ricci flow starting from $(M,g)$, then
\[T_d \geq c(n)\max \{ A, \frac{1}{K}\},\]
where $c(n)$ is a constant depending only on $n$.
\end{thm}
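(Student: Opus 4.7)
The overall strategy is a bootstrap on the interval $[0, T_d]$, where $\sup_M |Rm| \leq 2K$ by definition of $T_d$: I propagate the initial exponential smallness $e^{-AK}$ of $Ric$, $\nabla Ric$, and $\nabla^2 Ric$, then convert it into a pointwise bound on $\partial_t Rm$. The case $AK \leq 1$ reduces to Lemma \ref{lemma: doubling time estimate of Hamilton}, so I may assume $AK \geq 1$ and aim for $T_d \geq c(n) A$.

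\textbf{Step 1 (Persistence of Ricci smallness).} Under Ricci flow the Ricci tensor satisfies $\partial_t Ric = \Delta Ric + B(Rm, Ric)$ for a bilinear curvature contraction $B$, giving
\[
\partial_t |Ric|^2 \leq \Delta|Ric|^2 - 2|\nabla Ric|^2 + C(n)|Rm|\,|Ric|^2.
\]
Analogous inequalities for $|\nabla Ric|^2$ and $|\nabla^2 Ric|^2$ acquire additional forcing terms of the form $|\nabla^i Rm|\,|\nabla^j Ric|$ with $i+j \leq 2$. Coupling with Shi-type derivative bounds $|\nabla^m Rm| \leq C(n,m) K^{1+m/2}$ on $[0, T_d]$ --- which extend down to $t=0$ using the hypothesis on $|\nabla^m Ric|$ to control $\Delta Rm$ initially via the second Bianchi identity --- and applying the maximum principle to the combined quantity $\phi(t) := \max_{m=0,1,2} K^{-m/2}\sup_M|\nabla^m Ric|(t)$, one obtains $\phi'(t) \leq C(n)K\phi(t)$. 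Since $\phi(0) \leq (n-1)Ke^{-AK}$, this yields a constant $c_1(n) > 0$ such that on $[0, c_1 A]$,
\[
\sup_M|\nabla^m Ric|(t) \leq 2(n-1) K^{1+m/2} e^{-AK/2}, \qquad m = 0, 1, 2.
\]

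\textbf{Step 2 (Pointwise growth of $|Rm|$).} The crucial idea is to avoid the heat form $\partial_t Rm = \Delta Rm + Q(Rm)$, whose reaction term $|Q(Rm)| \sim |Rm|^2$ produces the $|Rm|^3$ term responsible for Hamilton's doubling time, and instead to use the first-variation formula for $Rm$ under the metric variation $\partial_t g = -2Ric$:
\[
\partial_t R_{ijkl} = -\nabla_i\nabla_k R_{jl} + \nabla_i\nabla_l R_{jk} + \nabla_j\nabla_k R_{il} - \nabla_j\nabla_l R_{ik} + (Rm * Ric)_{ijkl}.
\]
This expresses $\partial_t Rm$ explicitly in terms of $\nabla^2 Ric$ and $Ric$, with coefficients depending only on $n$ and $Rm$. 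Inserting the Step 1 bounds,
\[
|\partial_t Rm|(p, t) \leq C_1(n)|\nabla^2 Ric|(p,t) + C_2(n)|Rm|(p,t)|Ric|(p,t) \leq C_3(n)K^2 e^{-AK/2},
\]
uniformly on $[0, c_1 A]$. Since $\frac{d}{dt}|Rm|(p,t) \leq |\partial_t Rm|(p,t)$ pointwise, integration gives
\[
\sup_M|Rm|(t) \leq K + C_3(n) K^2 e^{-AK/2}\, t \leq K + c_1 C_3(n)\,(AK\, e^{-AK/2})\, K.
\]
The function $x \mapsto xe^{-x/2}$ is bounded by $2/e$ on $[0, \infty)$; shrinking $c_1 = c_1(n)$ so that $c_1 C_3(n)\cdot(2/e) \leq 1/2$ yields $\sup_M|Rm|(t) \leq 3K/2 < 2K$ throughout $[0, c_1 A]$, forcing $T_d \geq c_1 A$. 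Combined with Hamilton's $T_d \geq c(n)/K$, the theorem follows.

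The principal obstacle is Step 1: the evolution of $|\nabla^2 Ric|^2$ contains cross-terms such as $|\nabla^2 Rm|\,|Ric|\,|\nabla^2 Ric|$ which, after substituting Shi's $|\nabla^2 Rm| \leq CK^2$, contribute forcing of size $\sim K^5 e^{-AK}$ that must be absorbed by Young's inequality without destroying the exponential factor in the propagation. This coupled hierarchy is why the hypothesis bounds $|\nabla^m Ric|$ for $m = 0, 1, 2$ with the \emph{same} decay factor $e^{-AK}$: only then does $\phi(t)$ satisfy a clean Grönwall-type inequality preserving smallness over an interval of length $\sim A$, and only then does the first-variation formula in Step 2 produce an integrable bound on $|\partial_t Rm|$.
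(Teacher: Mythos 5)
Your proposal identifies and uses exactly the paper's key idea: replace the parabolic reaction--diffusion form $\partial_t Rm = \Delta Rm + Q(Rm)$ (whose $|Rm|^2$ reaction term is responsible for Hamilton's bound) with the first-variation formula $\partial_t R_{ijkl} = \nabla^2 Ric * \cdots + Rm*Ric$, so that the growth of $|Rm|$ is driven directly by $Ric$ and $\nabla^2 Ric$, whose exponential smallness you propagate by the maximum principle. Steps 1 and 2 mirror the paper's Steps 2 and 3 respectively, and the final arithmetic (observing $x e^{-x/2}$ is bounded, combined with Hamilton's lemma for the regime $AK \lesssim 1$) is a valid, slightly different packaging of the same conclusion that the paper reaches via its auxiliary Lemma \ref{lemma: minimum of a continuous function}.

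The one place where your argument glosses over a genuine subtlety is the handling of $|\nabla^2 Rm|$ near $t=0$. You assert that Shi-type bounds $|\nabla^m Rm| \leq C(n,m)K^{1+m/2}$ ``extend down to $t=0$,'' and in your closing paragraph you substitute $|\nabla^2 Rm| \leq CK^2$ uniformly. This is not available: the hypotheses bound $\nabla^m Ric$ only up to $m=2$, so via equation (\ref{equation: elliptic equation for Rm}) and elliptic regularity one gets $|\nabla Rm|(0) \leq C K^{3/2}$ (as in Lemma \ref{lemma: control nabla Rm by Rm and nabla nabla Ric}), but there is no initial bound on $|\nabla^2 Rm|$ without assuming $\nabla^3 Ric$. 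Shi's estimate gives only $|\nabla^2 Rm|(t) \leq CK^{3/2}t^{-1/2}$ for $t < 1/K$. Consequently your quantity $\phi$ satisfies a Gr\"onwall inequality of the form $\phi' \leq C(K + K^{1/2}t^{-1/2})\phi$ rather than $\phi' \leq CK\phi$ on $(0,1/K]$; the singularity is integrable (contributing only a fixed multiplicative factor $e^{C(n)}$), so the conclusion survives, but this needs to be said explicitly. The paper handles precisely this point by observing $\int_0^{1/K} CK^{5/2}t^{-1/2}e^{C_1Kt-AK}\,dt \leq 2Ce^{C_1}K^2 e^{-AK}$ before applying the maximum principle. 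With that one correction, your argument is sound.
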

\begin{rem}
Since the Ricci curvature controls $Rm$ in dimension $2$ and $3$, the above theorem is new only in dimension $n \geq 4$.
\end{rem}
When the Ricci curvature is very small relative to $Rm$, we can take $A$ large and Theorem \ref{theorem: improved doubling time estimate} implies a long lifespan. In particular on Ricci-flat manifolds we can take $A=\infty$, hence $T_d =\infty$. On the other hand, when the Ricci curvature dominates we can still make the assumption true by choosing $A=0$ and $K$ large enough. We prove Theorem \ref{theorem: improved doubling time estimate} in section \ref{subsection: uniformly small ricci curvature}. The key idea is to control the temporal growth of $|Rm|$ at each point directly by the second derivative of $Ric$. This idea has also been implemented in a different way in the recent work of Kotschwar, Munteanu and Wang \cite{kotschwar2015local}, where they established an explicit upper bound of $|Rm|$ in terms of its initial value, a uniform bound of $|Ric|$ in a space-time neighbourhood, and the elapsed time. However, their estimate loses sharpness when the Ricci curvature is small, hence it does not serve our purpose. Curvature and derivative estimates based on the space-time bound of $|Ric|$ have also been obtained by Chen in \cite{chen2016compactness}, where an injectivity lower bound or a pinching condition was also required on a time interval. Our results in this paper only need assumptions on the initial manifolds.

To make this lifespan estimate more useful, we shall restrict the smallness assumption on $Ric$ and its covariant derivatives only to regions where $Rm$ is large. In section \ref{subsection: non-uniforl smallness of ricci}, we will show lifespan estimates (Proposition \ref{theorem: lifespan estimate - compact case}, \ref{theorem: lifespan estimate - compact case - slow curvature growth}) for Ricci flows starting from compact Riemannian manifolds satisfying some good covering conditions, which generalize Theorem \ref{theorem: improved doubling time estimate}.

With some more efforts we can estimate the lifespan of the local Ricci flow (\ref{equation: local Ricci flow}) introduced by Deane Yang \cite{yang1992convergence}, see Proposition \ref{theorem: lifespan estimate - local ricci flow} in section \ref{subsection: lifespan estimate for the local ricci flow}. This estimate serves as a key ingredient to establish the following existence result for the Ricci flow on complete manifolds with potentially unbounded curvature, which will be proved in section \ref{subsection: existence of rf}.
\begin{defn}\label{definition: second order curvature scale}
For any point $x$ in a Riemannian manifold, we define $\rho_x$ to be the largest number such that
\[\sup_{B(x,\rho_x)} |Rm| \leq \frac{1}{\rho_x^{2}},\]
\[\sup_{B(x,\rho_x)} |\nabla^m Ric|(x) \leq \frac{n-1}{\rho_x^{2+m}}, \quad m=1,2.\]
\end{defn}
Clearly $\rho_x$ always exists and $\rho_x>0$ for any point $x$ in a smooth Riemannian manifold.
\begin{thm}\label{theorem: existence of ricci flow strong assumption}
Let $(M,g)$ be a compete Riemannian manifold with dimension $n$. Let $\rho_x$ be as in Definition \ref{definition: second order curvature scale}. Suppose there are constants $\alpha, \beta, \gamma>0$, such that for any $x, y \in M$, we have

(i) $|\nabla^m Ric|(x) \leq (n-1)\rho_x^{-2-m} e^{\beta- \alpha\rho_x^{-2}}$ for $m=0,1,2$,

(ii) $|\rho_x^{-2} - \rho_y^{-2}|\leq \gamma$ when $d(x,y) < \rho_x + \rho_y$.

\noindent Then there exists a complete solution of the Ricci flow $g(t)$, $t\in [0,T_0]$, with $g(0)=g$, where $T_0>0$ depends only on $n, \alpha, \beta, \gamma$. Moreover, $(M,g(t))$ is equivalent to $(M,g)$ and has at most quadratic curvature growth.
\end{thm}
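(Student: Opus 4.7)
The plan is to construct $g(t)$ as a smooth limit of local Ricci flows run on a compact exhaustion of $M$, invoking Proposition \ref{theorem: lifespan estimate - local ricci flow} to secure a lifespan bound that is uniform in the exhaustion index. Although $|Rm|$ may be unbounded on $(M,g)$ so that classical existence theorems do not apply directly, the combined effect of (i) and (ii) provides a quantitative cover of $M$ by balls $B(x,\rho_x)$ on each of which the hypotheses of the local lifespan estimate are verified with parameters depending only on $n,\alpha,\beta,\gamma$.

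First I would fix an exhaustion $\Omega_1 \subset \Omega_2 \subset \cdots$ with each $\overline{\Omega}_k$ compact and $\bigcup_k \Omega_k = M$, choose cutoffs $\phi_k$ equal to $1$ on $\Omega_k$ and vanishing outside $\Omega_{k+1}$, and solve the local Ricci flow equation (\ref{equation: local Ricci flow}) with these cutoffs and initial data $g$. Since $\phi_k$ has compact support and the initial curvature is bounded on that support, standard arguments yield a smooth solution $g_k(t)$ for a short time. To promote this to a uniform lifespan $T_0>0$, I would verify the hypotheses of Proposition \ref{theorem: lifespan estimate - local ricci flow} relative to the canonical cover $\{B(x,\rho_x/2)\}_{x\in M}$: condition (ii) converts pointwise scales $\rho_x$ into a local covering with bounded multiplicity and a Lipschitz radius function, while condition (i) provides exactly the exponential smallness of $|\nabla^m Ric|$ for $m=0,1,2$ that plays the role of the factor $e^{-AK}$ in Theorem \ref{theorem: improved doubling time estimate} with $K\sim \rho_x^{-2}$ and $A\sim \alpha\rho_x^{-2}$. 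The output is $T_0=T_0(n,\alpha,\beta,\gamma)$ together with pointwise bounds $|Rm|(x,t) \leq C\rho_x^{-2}$ on $[0,T_0]$.

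With these uniform curvature bounds in hand, Shi's interior derivative estimates give uniform $C^\infty$ control of $g_k(t)$ on every relatively compact subset of $M$ for $t\in[0,T_0]$. A standard diagonal subsequence then converges in $C^\infty_{\mathrm{loc}}$ to a smooth $g(t)$ on $M\times[0,T_0]$, and since $\phi_k \to 1$ pointwise with uniformly bounded derivatives on fixed compact sets, the limit solves the genuine Ricci flow equation. Equivalence with $g$ (and hence completeness) follows by integrating the $|Ric|$ bound in time, giving locally uniform inequalities of the form $e^{-CT_0 \rho_x^{-2}}\, g \leq g(t) \leq e^{CT_0 \rho_x^{-2}}\, g$. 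For quadratic curvature growth, iterating condition (ii) along a minimizing geodesic from a basepoint $x_0$ yields $\rho_x^{-2}\leq C(1+d(x,x_0)^2)$, and this property is inherited by $g(t)$ via the uniform bound $|Rm|(x,t)\leq C\rho_x^{-2}$.

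The main obstacle is extracting a lifespan that is uniform both in $k$ and across $M$. Where $|Rm|$ is very large --- equivalently where $\rho_x$ is small --- one must exploit the sharply diminishing factor $e^{\beta-\alpha\rho_x^{-2}}$ from (i) so that the smallness of $|\nabla^m Ric|$ compensates for the largeness of $|Rm|$, in exactly the balance demanded by Proposition \ref{theorem: lifespan estimate - local ricci flow}. Verifying this balance on each ball of the covering, while ensuring that the cutoff-induced error terms from the local Ricci flow are absorbed into the same estimate and do not accumulate near the boundary of $\Omega_k$, is the most delicate step of the argument.
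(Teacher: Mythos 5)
Your overall strategy matches the paper's: reduce to the good-covering framework (Assumption \ref{initial good cover assumption}), invoke Proposition \ref{theorem: lifespan estimate - local ricci flow} to get a uniform lifespan for local Ricci flows on a compact exhaustion, then take a Cheeger--Gromov limit. That is precisely how the paper proceeds via Theorem \ref{theorem: existence of ricci flow on noncompact manifolds}.

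However, the covering step --- which is the entire technical content of the paper's proof of this theorem --- is asserted rather than carried out, and as stated it would not go through. The ``canonical cover'' $\{B(x,\rho_x/2)\}_{x\in M}$ is an uncountable family, not a cover in the sense required by Assumption \ref{initial good cover assumption}: you need a Vitali-type extraction of a maximal disjoint subfamily $\{B(x_i,r_i/L)\}$ for a scale $L$ chosen in terms of $\gamma$, you need the radii capped at $1$ (the paper takes $r_i=\min\{\rho_{x_i}/16,1\}$, so that $K_i\geq \bar r^2$ is ensured even where $\rho_x$ is large), and you then need to verify the comparability claim $r_j/(16\sqrt{1+\gamma})<r_i<16\sqrt{1+\gamma}\,r_j$ when $\hat B_i\cap\hat B_j\neq\emptyset$, which is what makes both the selected family a cover and the multiplicity bound $I<\infty$ (via volume comparison, using the uniform Ricci bound implied by (i)) work. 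Condition (d) of Assumption \ref{initial good cover assumption} --- that $K_j\leq K_i+\Gamma$ for overlapping balls --- is the crux, and to obtain it for all points $y\in\hat B_i$ rather than just for centers you also need Lemma \ref{lemma: no mutual inclusion under curvature scale} giving $\rho_y<2\rho_{x_i}$. Without these verifications, you have no $\Gamma$, no $A$, no $\bar K$, and no $I$ to feed into Proposition \ref{theorem: lifespan estimate - local ricci flow}, so the uniform lifespan $T_0(n,\alpha,\beta,\gamma)$ does not follow. The quadratic-growth and equivalence conclusions you sketched at the end are fine, as is the diagonal-limit argument, but those are the easy parts.
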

\begin{rem}\label{remark: unbounded curvature}
Assumption (ii) implies that the initial manifolds in Theorem \ref{theorem: existence of ricci flow strong assumption} can have at most quadratic curvature growth, in that case, so will the solutions for a short time.
\end{rem}
\begin{rem}
The condition $|\nabla^m Ric|(x) \leq (n-1)\rho_x^{-2-m} e^{\beta- \alpha\rho_x^{-2}}$ for $m=0,1,2$, is trivial when $\rho_x \geq \sqrt{\alpha/\beta}$. Therefore the smallness of the Ricci curvature is only assumed on highly curved regions.
\end{rem}

Existence of Ricci flow solutions on noncompact manifolds was first proved by Shi in \cite{shi1989deforming}, where the curvature is assumed to be uniformly bounded. Under weaker curvature conditions and some noncollapsing assumptions, short-time existence has been proved by Xu \cite{xu2013short}, Simon \cite{simon2012ricci} and Hochard \cite{hochard2016short}, where the curvature may be unbounded initially, but becomes uniformly bounded instantaneously for any positive time. See also the existence result of Chau, Li and Tam in \cite{chau2014deforming} for the K\"ahler Ricci flow. Solutions with unbounded curvature for positive time have been obtained by Cabezas-Rivas and Wilking in \cite{cabezas2011produce}, where the manifolds are assumed to have nonnegative complex sectional curvature. And by Giesen and Topping in \cite{giesen2013ricci} for 2 dimensional manifolds. In general, the existence of Ricci flow solutions with unbounded curvature is a very subtle problem, one can refer to \cite{topping2014ricci} for a survey. On one hand the existence is expected to be not true in general, a possible counter-example is a sequence of spheres connected by thinner and thinner necks, which should be pinched immediately under the flow. On the other hand, since the Ricci flow is such a natural geometric equation, it'll be interesting to know what initial manifolds admit a short-time solution.

The solutions provided by Theorem \ref{theorem: existence of ricci flow strong assumption} may also have unbounded curvature for positive time, as pointed out by Remark \ref{remark: unbounded curvature}.  We would also like to point out that the uniqueness of Ricci flows with quadratic curvature growth and some other mild assumptions has been established by Kotschwar in \cite{kotschwar2014energy}. It's easy to check that our solutions satisfy these conditions.

In section \ref{subsection: existence of rf} we will prove the existence of Ricci flows on manifolds satisfying a good covering condition (\ref{initial good cover assumption}), see Theorem \ref{theorem: existence of ricci flow on noncompact manifolds}. Our approach is to take limit of a sequence of local Ricci flows with domains exhausting the entire manifold, this strategy has also been applied in \cite{xu2013short} and \cite{yang1992convergence}. The main difficulty in the analysis of the local Ricci flow is that higher order derivatives of the cut-off function appears in evolution equations of the curvatures, and need to be controlled. In section \ref{section: derivatives estimates for the local ricci flow} we establish some derivative estimates for the local Ricci flow, which may be of independent interests.

During the proof of lifespan estimates we establish barrier function upper bounds for the norm of the Ricci curvature and its first two derivatives, which also imply an estimate for the spatial `transfer rate' -- a term borrowed from heat conduction -- of the Ricci flow. Recall that the heat transfer rate reflects how fast (or slow) lower temperature regions are affected  by higher temperature regions. As implied by the fundamental solution of the heat equation on $\mathbb{R}^n$, it decays at the rate of $e^{-d^2}$, where $d$ is the distance to the origin. Since the Ricci curvature tensor can be viewed as the Laplacian of the metric tensor, it is reasonable to expect the following Ricci flow analogue, which is proved in section \ref{section: transfer rate of the ricci flow}.

%

\begin{thm}\label{theorem: transfer rate estimate}
Suppose $(M,g)$ is a noncompact manifold satisfying
\[|Rm|(x) \leq (1+d(x,p))^{2}, \]
\[|\nabla^m Ric|(x) \leq (1+d(x,p))^{2+m} \exp ( -\alpha (1+d(x,p))^2), \quad m=0,1,2,\]
for all $x\in M \backslash \Omega_0$, where $\Omega_0$ is a compact subset, $p\in M$ is a fixed point, $\alpha$ is a positive constant and $d(\cdot,\cdot)$ is the distance function. Then there is a complete solution of the Ricci flow on $M\times [0,T_1]$ with initial metric $g$, whose Ricci curvature has decay:
\[|\nabla^m Ric|(x) \leq C_1 (1+d(x,p))^{2+m} \exp ( - C_2 (1+d(x,p))^2), \quad m=0,1,2,\]
for $(x,t) \in M\backslash \Omega_0 \times [0, T_1]$, where the constants $C_1, C_2$ and $T_1 >0$ depend on $\Omega_0, n$ and $ \alpha$.
\end{thm}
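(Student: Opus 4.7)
The plan is two-fold: first obtain short-time existence via the noncompact existence result from Section \ref{subsection: existence of rf}, then upgrade to Gaussian spatial decay of $|\nabla^m Ric|$ by a parabolic maximum-principle argument using a spatially decaying barrier.

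For existence, I would verify the hypotheses of Theorem \ref{theorem: existence of ricci flow strong assumption}. Outside the compact set $\Omega_0$, the bound $|Rm|(x)\leq (1+d(x,p))^2$ forces the second-order curvature scale $\rho_x$ of Definition \ref{definition: second order curvature scale} to satisfy $\rho_x^{-2}\leq C(1+d(x,p))^2$, and the Gaussian decay hypothesis on $|\nabla^m Ric|$ translates into condition (i) with parameters determined by $\alpha$ and uniform bounds on $\Omega_0$. Condition (ii) follows from the triangle inequality applied to $d(\cdot,p)$. This yields a complete Ricci flow on $M\times[0,T_1]$ with at most quadratic curvature growth; after possibly shrinking $T_1$, the metrics $g(t)$ are uniformly equivalent to $g(0)$, so $d_{g(t)}(x,p)$ is comparable to $d_{g(0)}(x,p)$.

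The heart of the argument is the construction of a space-time barrier with Gaussian spatial decay. Write $f(x)=1+d_{g(0)}(x,p)$ and consider
\[\Phi_m(x,t) = C_1\, f(x)^{2(2+m)}\exp\bigl(-C_2 f(x)^2 + L t\bigr),\quad m=0,1,2.\]
The standard evolution inequality
\[(\partial_t-\Delta)|\nabla^m Ric|^2 \leq -2|\nabla^{m+1}Ric|^2 + C\sum_{i+j=m}|\nabla^i Rm|\,|\nabla^j Ric|\,|\nabla^m Ric|\]
holds on the flow. On $\Omega_0$ all quantities are uniformly bounded. Outside $\Omega_0$, since $|Rm|\leq f^2$ grows only polynomially while each $|\nabla^j Ric|$ carries a Gaussian factor $e^{-\alpha f^2}$, the reaction terms inherit Gaussian decay, whereas $(\partial_t-\Delta)\Phi_m$ contains a favorable term of size $C_2|\nabla f|^2\Phi_m\approx C_2 f^2\Phi_m$ coming from the Laplacian acting on the Gaussian. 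Choosing $C_2<\alpha$ and $L$ sufficiently large then makes $\Phi_m$ a supersolution; an Omori--Yau cutoff argument at infinity, justified by the at-most-quadratic curvature growth, yields $|\nabla^m Ric|^2\leq\Phi_m$ on $M\times[0,T_1]$, which is the stated decay after taking square roots.

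The main obstacle I expect is that the inequality above involves $|\nabla^i Rm|$ for $i\leq m$, which are not controlled directly by the hypotheses. To handle this, after obtaining $g(t)$ I would apply Shi-type interior derivative estimates on balls of radius comparable to $f(x)^{-1}$ centred at each $x$, using the quadratic bound on $|Rm|$, to conclude $|\nabla^i Rm|(x,t)\leq C_i f(x)^{2+i}$ for $i=1,2$ and $t\in(0,T_1]$, shrinking $T_1$ if necessary. With these ambient derivative bounds, the reaction terms become polynomial in $f$ times Gaussian, and the constants $C_1, C_2, L$ can be tuned successively for $m=0,1,2$ in a coupled three-function barrier where the Gaussian control of $|\nabla^j Ric|$ for $j<m$ feeds back into the differential inequality for $|\nabla^m Ric|^2$. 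This closes the argument and produces the claimed constants $C_1, C_2, T_1$ depending only on $\Omega_0$, $n$ and $\alpha$.
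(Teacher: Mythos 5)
Your overall strategy --- obtain existence from the noncompact existence theorem, then upgrade to Gaussian spatial decay by a barrier/maximum-principle argument --- is conceptually reasonable, but the specific barrier you write down has the wrong sign in the crucial term, and the argument as stated would fail.

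Writing $v = 2(2+m)\ln f - C_2 f^2$ so that $\Phi_m = C_1 e^{v + Lt}$, one has
\[
(\partial_t - \Delta)\Phi_m \;=\; \Phi_m\,\bigl(L - \Delta v - |\nabla v|^2\bigr),
\]
and for large $f$ the dominant term is $-|\nabla v|^2 \approx -4C_2^2 f^2 |\nabla f|^2$, not $+C_2|\nabla f|^2$. The ``favorable term of size $C_2 |\nabla f|^2 \Phi_m$'' that you invoke comes from $-\Delta v$ and is only $O(\Phi_m)$; it is swamped by the $O(f^2)\Phi_m$ contribution of $-|\nabla v|^2$. In other words a static Gaussian multiplied by $e^{Lt}$ is a \emph{sub}solution of the heat equation at spatial infinity (the Gaussian spreads out), not a supersolution. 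Worse, the reaction term you need to dominate is itself $\approx C f^2 \Phi_m$ (because $|Rm|\le f^2$), so a supersolution would need $(\partial_t-\Delta)\Phi_m \gtrsim f^2\Phi_m$, while your $\Phi_m$ gives $(\partial_t-\Delta)\Phi_m \lesssim -f^2\Phi_m$. This is a genuine gap; it cannot be fixed by choosing $L$ large since $L$ is a constant while the deficit grows like $f^2$.

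The standard fix is a Gaussian with time-widening variance, e.g.\ $\Phi_m = C_1 f^{2(2+m)}\exp\bigl(-C_2 f^2/(1+Lt)\bigr)$ (with $C_2 < 2\alpha$ so that $\Phi_m(\cdot,0)$ dominates the initial data). Then $\partial_t \Phi_m$ produces the favorable term $+L C_2 f^2 (1+Lt)^{-2}\Phi_m$, which beats both $-4C_2^2 f^2 (1+Lt)^{-2}\Phi_m$ and the reaction term once $L$ is large and $T_1 \lesssim 1/L$. Even with this fix you would still need to estimate $\Delta_{g(t)} f$ with $f$ built from the $g(0)$-distance, and to justify the maximum principle at infinity.

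The paper actually takes a different and in some ways cleaner route: it directly verifies Assumption \ref{initial good cover assumption} from the hypotheses of Theorem \ref{theorem: transfer rate estimate} (building an explicit cover with $K_i \approx (1+d(x_i,p))^2$), invokes Theorem \ref{theorem: existence of ricci flow on noncompact manifolds}, and then simply reads off the decay from Lemma \ref{lemma: barrier estimates of Ric and its derivatives}. The barrier there is the piecewise object $\Phi_m = \sum_i \phi_i K_i^m P_i^2 e^{\Lambda K_i t}$; the time-growing factor $e^{\Lambda K_i t}$ is dominated by $e^{A K_i}$ because the lifespan already satisfies $T \le A/\Lambda$, and this is absorbed by the spatial decay $P_i^2 \sim K_i^2 e^{-2AK_i}$. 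This cleverly sidesteps the Gaussian-versus-heat-operator tension: the barrier comparison is run for the \emph{local} Ricci flow on compactly supported cutoff domains, where the metric is unchanged near the boundary and no Omori--Yau argument at infinity is needed, and the decay then passes to the Cheeger--Gromov limit. Your plan would also involve re-deriving, for the maximum principle, the Shi-type bounds $|\nabla^i Rm|\lesssim f^{2+i}$ for the $g(t)$-metric, which is extra work that Lemma \ref{lemma: barrier estimates of Ric and its derivatives} already packages.
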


\begin{rem}
It is easy to see that the temporal growth of the geometry under the solution in Theorem \ref{theorem: transfer rate estimate} has exponential decay, more precisely,
\[ |\ln (\frac{g(x,t)}{g(x,0)})| \leq C_1 \exp ( - C_2 (1+d(x,p))^2),  \]
\[
| |Rm|(x,t)-|Rm|(x,0) | \leq C_1 (1+d(x,p))^2 \exp ( -C_2 (1+d(x,p))^2),
\]
for any $(x,t) \in M \backslash \Omega_0 \times [0,T_1]$ and some constants $C_1, C_2$.
\end{rem}
A interesting particular case is described by the following immediate corollary.
\begin{cor}
Suppose a complete Riemannian manifold $(M,g)$ is Ricci-flat outside of a compact domain $\Omega$, and has at most quadratic curvature growth. Then $(M,g)$ admits a short-time solution of the Ricci flow, whose Ricci curvature has spatial decay at the rate of $e^{-d^2}$, where $d$ is the distance to $\Omega$ with respect to $g$.
\end{cor}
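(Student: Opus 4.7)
The plan is to obtain the existence of $g(t)$ from Theorem \ref{theorem: existence of ricci flow strong assumption} and then propagate the Gaussian decay of $|\nabla^m Ric|$ via a parabolic comparison with an explicit barrier.

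First I verify the hypotheses of Theorem \ref{theorem: existence of ricci flow strong assumption}. Outside $\Omega_0$, the quadratic bound on $|Rm|$ together with the derivative bounds implies that the curvature scale $\rho_x$ of Definition \ref{definition: second order curvature scale} satisfies $\rho_x^{-2}\leq C_n(1+d(x,p))^2$, while inside the compact set $\Omega_0$ the scale $\rho_x$ has a uniform positive lower bound; after relabelling constants this makes hypothesis (i) equivalent to the exponential smallness of $|\nabla^m Ric|$ assumed in the theorem. Hypothesis (ii) follows because $(1+d(x,p))^2$ is $2$-Lipschitz, so $\rho_x^{-2}$ changes by at most a bounded multiple on overlapping balls $B(x,\rho_x),B(y,\rho_y)$. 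This yields a complete solution $g(t)$ on $M\times[0,T_0]$ with metric equivalence and at-most-quadratic curvature growth.

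Next I construct the barrier. Write $r(x)=1+d_{g(0)}(x,p)$; by metric equivalence we may use $r$ in place of $1+d_{g(t)}(x,p)$ up to constants. The Hamilton-type evolution inequalities read
\begin{equation*}
\partial_t |\nabla^m Ric|\leq \Delta_{g(t)} |\nabla^m Ric| + C_n\sum_{i+j\leq m} |\nabla^i Rm|\,|\nabla^j Ric|,\quad m=0,1,2.
\end{equation*}
On $M\setminus\Omega_0$, Shi's interior derivative estimates applied on balls of radius $\rho_x$ bound $|\nabla^i Rm|$ by $C_n r^{2+i}$, so the right-hand side is dominated by $Cr^2$ times $|\nabla^j Ric|$ with $j\leq m$. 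Take the barrier
\[
\phi_m(x,t) = A\,r(x)^{2+m}\exp\bigl(-\alpha(t)\,r(x)^2\bigr),
\]
with $\alpha(0)$ slightly below $\alpha$ and $\alpha(t)$ solving an ODE of the form $\alpha'=-C_1\alpha^2-C_2$, which keeps $\alpha(t)>0$ on a short interval $[0,T_1]$ depending only on $\Omega_0,n,\alpha$. Using $|\nabla r|=1$ and the Laplacian comparison $\Delta r\leq Cr$ available under the quadratic $|Rm|$-bound, a direct calculation shows $\phi_m$ is a supersolution of the coupled system above, the dominant term $4\alpha(t)^2r^2\phi_m$ in $\Delta\phi_m$ being absorbed by $-\alpha'(t)r^2\phi_m$ in $\partial_t\phi_m$.

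The main obstacle is the parabolic boundary on $\partial\Omega_0\times[0,T_1]$ together with non-compactness at spatial infinity. Near $\partial\Omega_0$ the initial curvature is bounded by a constant depending only on $\Omega_0$, so Shi's estimates on a slight thickening of $\Omega_0$ give uniform bounds on $|\nabla^m Ric|$ for $t\in[0,T_1]$, which are absorbed into $\phi_m$ by enlarging $A$. At infinity the Gaussian $\phi_m$ decays much faster than the polynomial upper bound on $|\nabla^m Ric|$ guaranteed by Theorem \ref{theorem: existence of ricci flow strong assumption}, so a standard maximum-principle argument (with a vanishing $\varepsilon$-correction at infinity) gives $|\nabla^m Ric|\leq \phi_m$ on $(M\setminus\Omega_0)\times[0,T_1]$ for $m=0,1,2$ simultaneously. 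Setting $C_1=A$ and $C_2=\alpha(T_1)$ produces the claimed bound.
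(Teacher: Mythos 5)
Your route goes through Theorem \ref{theorem: existence of ricci flow strong assumption}, but the paper treats this corollary as a one-line specialization of Theorem \ref{theorem: transfer rate estimate}: with $Ric\equiv 0$ outside $\Omega$, the hypotheses $|\nabla^m Ric|(x)\leq (1+d(x,p))^{2+m}\exp(-\alpha(1+d(x,p))^2)$ are satisfied trivially for any $\alpha>0$, and after rescaling the quadratic growth gives $|Rm|\leq(1+d(x,p))^2$, so the conclusion of Theorem \ref{theorem: transfer rate estimate} is exactly the claimed decay. No new barrier construction is needed; the decay was already built into that theorem's conclusion (via Lemma \ref{lemma: barrier estimates of Ric and its derivatives} inside the local Ricci flow framework).

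There is also a genuine gap in your verification of Theorem \ref{theorem: existence of ricci flow strong assumption}. Hypothesis (ii) demands the \emph{additive} bound $|\rho_x^{-2}-\rho_y^{-2}|\leq\gamma$ whenever $d(x,y)<\rho_x+\rho_y$, and this does not follow from ``at most quadratic curvature growth.'' First, the function $(1+d(x,p))^2$ is not $2$-Lipschitz (its Lipschitz constant is $2(1+d)$, which is unbounded); the heuristic cancellation one wants is that $d(x,y)<\rho_x+\rho_y\lesssim (1+d)^{-1}$ makes the product bounded, but this only works if $\rho_x^{-2}$ is comparable to $(1+d(x,p))^2$ from both sides. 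Quadratic growth only gives the upper bound $\rho_x^{-2}\leq C(1+d)^2$; in low-curvature pockets $\rho_x$ can be arbitrarily large, and near a sharp curvature spike $\rho_x^{-2}$ can jump by an unbounded amount between two points whose balls still overlap. Second, even granting a multiplicative comparison on overlapping balls, that is weaker than the required additive bound and does not yield a finite $\gamma$ when $\rho_x\to 0$. The paper sidesteps exactly this issue by defining the cover radius via $K_x=64\bar r^2(1+d(x,p))^2$, a pure distance function, rather than via $\rho_x$; condition (d) of Assumption \ref{initial good cover assumption} is then verified by the triangle inequality, which has no analogue for $\rho_x$. Finally, even setting this aside, your closing maximum-principle argument at spatial infinity is asserted but not justified: on a complete noncompact manifold with merely quadratic (hence unbounded) curvature, applying a parabolic maximum principle requires a growth hypothesis on the comparison quantity that needs to be checked, and the paper instead obtains the supersolution bound on compact domains through the local Ricci flow and then passes to the limit, avoiding this subtlety entirely.
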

Similar decay estimates for Ricci flows with bounded curvature coming out of Ricci-flat cones have been proved by Siepmann \cite{siepmann2013ricci} and Deruelle \cite{deruelle2014asymptotic}.

Finally we would like to point out that parallel results can be proved by the same method for the normalized Ricci flow
\[\frac{\partial}{\partial t} g = -2(Ric -\lambda g), \quad \lambda=Const,\]
where the smallness assumption should be imposed on the tensor $Ric -\lambda g$ instead of $Ric$.

\textbf{Acknowledgements:} The author is grateful to Prof. Jiaping Wang for very helpful conversations. He would also like to thank Prof. Robert Gulliver for his interest in this work.

\section{Implications of bounded $\nabla^2 Ric$}
By the second Bianchi identity and the commutation formula for covariant derivatives, we can calculate directly that
\begin{equation}\label{equation: elliptic equation for Rm}
\Delta R_{ijkl} = Q(Rm)_{ijkl} + \nabla_k\nabla_i R_{jl} - \nabla_k \nabla_j R_{il} +\nabla_l \nabla_j R_{ik} - \nabla_l \nabla_i R_{jk},
\end{equation}
where $Q(Rm)$ is a quadratic term. This equation allows us to estimate the first derivative of $Rm$ in terms of bounds on $|Rm|$ and $|\nabla^2 Ric|$. In the following, we use $B_g(p,r)$ to denote a geodesic ball with radius $r$, centered at a point $p$ on a complete $n$-dimensional Riemannian manifold $(M,g)$.

\begin{lem}\label{lemma: control nabla Rm by Rm and nabla nabla Ric}
Suppose $|Rm|(x)\leq C_0$ and $|\nabla^2 Ric|(x)\leq C_0$, for all $x\in B_g(p,1)$, then there is a constant $C_1$ depending on $C_0$ and the dimension $n$, such that $|\nabla Rm|(x)\leq C_1$ for any $x\in B_g(p,\frac{1}{4})$.
\end{lem}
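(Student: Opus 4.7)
The plan is to view equation (\ref{equation: elliptic equation for Rm}) as an inhomogeneous elliptic equation for $Rm$ whose right-hand side is a priori controlled by the hypotheses, then obtain a pointwise interior bound for $|\nabla Rm|$ by combining the Bochner identity with a cut-off energy argument and Moser iteration. Immediately from (\ref{equation: elliptic equation for Rm}) and the hypotheses, $|\Delta Rm|(x) \le C(n) C_0^2 + C(n) C_0$ on $B_g(p, 1)$, since the quadratic term $Q(Rm)$ is bounded by $|Rm|^2$ up to constants and each of the four $\nabla^2 Ric$ terms is bounded by $C_0$. Effectively, I am proving an interior gradient estimate for $\Delta Rm = f$ with $|Rm|, |f| \le C(n, C_0)$.

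The first concrete step is to fix a cut-off $\phi \in C_c^\infty(B_g(p, 1/2))$ with $\phi \equiv 1$ on $B_g(p, 1/4)$, $|\nabla \phi| \le C(n)$, $|\Delta \phi| \le C(n, C_0)$; the Hessian bound uses Laplacian comparison on the distance function from $p$, which requires only the Ricci lower bound supplied by $|Rm|\leq C_0$. Multiplying the Bochner identity $\Delta |Rm|^2 = 2|\nabla Rm|^2 + 2\langle Rm, \Delta Rm\rangle$ by $\phi^2$ and integrating by parts to move $\Delta$ onto $\phi^2$ produces the weighted $L^2$ estimate
$$\int \phi^2 |\nabla Rm|^2\, dV \le C(n, C_0),$$
using the pointwise bound on $|\langle Rm, \Delta Rm\rangle|$ from the first step together with volume comparison for $B_g(p,1/2)$.

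To upgrade this $L^2$ control to a pointwise bound, I would iterate an analogous energy inequality for $u := |\nabla Rm|^2$ in the style of Moser. Differentiating (\ref{equation: elliptic equation for Rm}) once yields $\Delta \nabla Rm = Rm \ast \nabla Rm + \nabla^3 Ric$, so the Bochner identity $\Delta u = 2|\nabla^2 Rm|^2 + 2\langle \nabla Rm, \Delta \nabla Rm\rangle + (\mathrm{curvature}) \cdot u$ contains the term $2\langle \nabla Rm, \nabla^3 Ric\rangle$. This is the main obstacle: a direct pointwise Bernstein / maximum-principle argument on $\phi^2|\nabla Rm|^2 + \Lambda |Rm|^2$ is blocked because $\nabla^3 Ric$ is not controlled by the hypotheses. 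The resolution is to stay in integral form and shift the extra derivative off $Ric$ by one more integration by parts:
$$\int \phi^{2k} u^{p-1} \langle \nabla Rm, \nabla^3 Ric\rangle \;=\; -\int \langle \nabla(\phi^{2k} u^{p-1} \nabla Rm),\, \nabla^2 Ric\rangle,$$
after which the bounded factor $|\nabla^2 Ric| \le C_0$ takes over and the emerging $|\nabla^2 Rm|$ contributions are absorbed into the good Bochner term $2\phi^{2k} u^{p-1} |\nabla^2 Rm|^2$ by Cauchy--Schwarz. The standard Moser iteration then yields $\sup_{B_g(p,1/4)} u \le C_1(n, C_0)$, which is the desired bound. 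An alternative route, once $|Rm| \le C_0$ is in hand, is to introduce harmonic coordinates on balls of radius $\sim C_0^{-1/2}$ and apply an interior $C^{1,\alpha}$ Schauder estimate componentwise to the system $\Delta Rm = f$; this also produces the pointwise gradient bound directly, at the cost of more coordinate bookkeeping.
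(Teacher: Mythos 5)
Your proposal is correct and follows essentially the same route as the paper's proof: you derive the same auxiliary equation $\Delta\nabla Rm = Rm*\nabla Rm + \nabla^3 Ric$, identify the uncontrolled $\nabla^3 Ric$ term as the obstacle, resolve it by one further integration by parts to land on $\nabla^2 Ric$, and close with a cut-off energy estimate and Moser iteration against the Saloff-Coste Sobolev inequality, with the $L^2$ bound on $\nabla Rm$ supplied separately (the paper's Lemma \ref{lemma: l2 estimate of nabla Rm}). The only differences are cosmetic (Bochner for $u=|\nabla Rm|^2$ vs.\ testing directly against $|\nabla Rm|^{2p-2}\nabla Rm$, and requesting a Hessian bound on the cut-off that the paper avoids by integrating by parts only once in the $L^2$ step).
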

\begin{proof}
Equation (\ref{equation: elliptic equation for Rm}) implies
\begin{equation}\label{equation: elliptic equation for nabla Rm}
\Delta \nabla Rm = Rm*\nabla Rm + \nabla^3 Ric,
\end{equation}
where $\nabla^3 Ric$ denotes a linear combination of $4$ terms of third order derivative of $Ric$. Here and in the following we use $*$ to denote a contraction by the Riemannian metric.

Let $\phi$ be a cut-off function compactly supported in $B_g(p,1/2)$ which equals $1$ in $B_g(p,1/4)$, and satisfies $|\nabla \phi|\leq 8$.
Multiply both sides of (\ref{equation: elliptic equation for nabla Rm}) by $\phi^2 |\nabla Rm|^{2p-2} \nabla Rm$ and integrate by parts to get
\[
\begin{split}
&\int \phi^2 |\nabla Rm|^{2p-2} |\nabla^2 Rm|^2 + (p-1) \int \phi^2 |\nabla Rm|^{2p-2} |\nabla |\nabla Rm||^2 \\
=& \int 2 \phi |\nabla Rm|^{2p-2} \nabla \phi * \nabla Rm * \nabla^2 Rm + \int \phi^2 |\nabla Rm|^{2p-2} Rm * \nabla Rm * \nabla Rm \\
& + \int \phi^2 |\nabla Rm|^{2p-2} \nabla Rm * \nabla^3 Ric.
\end{split}
\]
Integrate by parts again to get rid of $\nabla^3 Ric$ in the last term,
\[
\begin{split}
&\int \phi^2 |\nabla Rm|^{2p-2} \nabla Rm * \nabla^3 Ric \\
= & 2 \int \phi^2 |\nabla Rm|^{2p-2} \nabla^2 Rm * \nabla^2 Ric - \int 2\phi |\nabla Rm|^{2p-2} \nabla \phi * \nabla Rm * \nabla^2 Ric  \\
& - (2p-2)\int \phi^2 |\nabla Rm|^{2p-3} \nabla |\nabla Rm| * \nabla Rm * \nabla^2 Ric.
\end{split}
\]
Then the Cauchy-Schwarz inequality and standard absorbing technique imply that
\[
\begin{split}
&\int \phi^2 |\nabla Rm|^{2p-2} |\nabla^2 Rm|^2 + (p-1) \int \phi^2 |\nabla Rm|^{2p-2} |\nabla |\nabla Rm||^2 \\
\leq & C \int_{B_g(p,1)} |\nabla Rm|^{2p}  + Cp \int \phi^2 |\nabla Rm|^{2p-2},
\end{split}
\]
where $C$ is a constant depending on $n$ and $C_0$.
The Sobolev inequality (\ref{lemma: Sobolev inequality}) and the Nash-Moser iteration technique yield
\[
\sup_{B_g(p, 1/4)} |\nabla Rm|^2 \leq C(n,C_0)\left( 1+  \frac{1}{Vol_g(B_g(p,1))} \int_{B_g(p,1/2)} |\nabla Rm|^2 \right).
\]
Then the claimed result follows from Lemma \ref{lemma: l2 estimate of nabla Rm}.
\end{proof}

\begin{lem}\label{lemma: l2 estimate of nabla Rm}
Suppose $|Rm|(x)\leq C_0$ and $|\nabla^2 Ric|(x)\leq C_0$ for all $x\in B_g(p,1)$, then there is a constant $C_2$ depending on $C_0$ and the dimension $n$, such that $\int_{B_g(p,1/2)}|\nabla Rm|^2\leq C_2 Vol_g(B_g(p,1))$.
\end{lem}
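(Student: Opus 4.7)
The plan is to use the elliptic equation (\ref{equation: elliptic equation for Rm}) for $Rm$ directly, paired with a standard cut-off function, and run a one-step Bochner/Caccioppoli argument. Let $\phi$ be a smooth cut-off supported in $B_g(p,1)$ with $\phi \equiv 1$ on $B_g(p,1/2)$ and $|\nabla \phi| \leq C(n)$. Multiply both sides of (\ref{equation: elliptic equation for Rm}) by $\phi^2\, Rm$, integrate over $B_g(p,1)$, and integrate by parts once on the left-hand side to move a derivative off $\Delta Rm$:
\[
-\int \phi^2 |\nabla Rm|^2 - 2\int \phi\, \nabla \phi * Rm * \nabla Rm = \int \phi^2\, Rm * Q(Rm) + \int \phi^2\, Rm * \nabla^2 Ric.
\]

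Next, I would estimate each term using the hypotheses $|Rm|\leq C_0$ and $|\nabla^2 Ric| \leq C_0$ on $B_g(p,1)$, together with the Cauchy--Schwarz inequality and the usual absorbing trick. The cross term is bounded by $\tfrac{1}{2}\int \phi^2 |\nabla Rm|^2 + C\int |\nabla \phi|^2 |Rm|^2$, the $Q(Rm)$ term by $C(n)\int \phi^2 |Rm|^3 \leq C(n,C_0)\int \phi^2$, and the last term by $C_0^2 \int \phi^2$. Absorbing the $\tfrac{1}{2}\int \phi^2|\nabla Rm|^2$ into the left yields
\[
\int \phi^2 |\nabla Rm|^2 \leq C(n,C_0) \int_{B_g(p,1)} \bigl( \phi^2 + |\nabla \phi|^2 \bigr) \leq C_2(n,C_0)\, \mathrm{Vol}_g(B_g(p,1)),
\]
and since $\phi \equiv 1$ on $B_g(p,1/2)$ this is exactly the desired bound.

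I do not expect a genuine obstacle here: because $|\nabla^2 Ric|$ is already bounded by hypothesis, there is no need to integrate by parts a second time to trade $\nabla^3 Ric$ for $\nabla^2 Ric$ (as was done in Lemma \ref{lemma: control nabla Rm by Rm and nabla nabla Ric}). The only mild subtlety is making sure that the constant in the volume-comparison step is uniform; but since the cut-off $\phi$ is supported inside $B_g(p,1)$ and the pointwise bounds on $|Rm|, |\nabla^2 Ric|$ hold throughout that ball, all constants above depend only on $n$ and $C_0$.
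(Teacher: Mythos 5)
Your proof is correct and is essentially the same argument as the paper's: both are one-step Caccioppoli estimates based on the elliptic equation for $Rm$ with a single integration by parts. The paper first forms the Bochner-type identity $\Delta|Rm|^2 = 2|\nabla Rm|^2 + 2\langle Rm, Rm*Rm + \nabla^2 Ric\rangle$ and then tests against $\phi^2$, while you test the tensor equation directly against $\phi^2\,Rm$; these differ only by a trivial algebraic rearrangement and yield the same inequality.
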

\begin{proof}
Equation (\ref{equation: elliptic equation for Rm}) implies
\[
\Delta |Rm|^2= 2|\nabla Rm|^2 + 2\langle Rm, Rm* Rm + \nabla^2 Ric\rangle.
\]
For any cut-off function $\phi$ compactly supported in $B_g(p,1)$,
\[
\begin{split}
2\int \phi^2 |\nabla Rm|^2 \leq & \int \phi^2 \Delta |Rm|^2 + C(n) \int \phi^2 (|Rm|^3 + |Rm||\nabla^2 Ric|). \\
\end{split}
\]
Integration by parts yields
\[
\begin{split}
\int \phi^2 \Delta |Rm|^2 = & -4\int \phi |Rm|\langle\nabla \phi, \nabla |Rm|\rangle \\
\leq & \int \phi^2 |\nabla Rm|^2 + 4\int |\nabla \phi|^2|Rm|^2.
\end{split}
\]
Then the proof is finished by choosing $\phi=1$ on $B_g(p,1/2)$ and $|\nabla \phi|\leq 2$.
\end{proof}

The Sobolev inequality used in the Nash-Moser iteration is provided by the well-known result due to Saloff-Coste \cite{saloff1992uniformly}, which we state here for reference in the following sections.
\begin{lem}[Saloff-Coste]\label{lemma: Sobolev inequality}
Suppose $n>2$ and $Ric \geq -(n-1)Kg$ on $B_g(p,r)$, where $K>0$ is a constant. Then there is a constant C depending only on $n$, such that
\begin{equation}\label{equation: sobolev inequality by Saloff-Coste}
\left( \int |f|^\frac{2n}{n-2} \right)^\frac{n-2}{n} \leq e^{C(1+\sqrt{K}r)}\frac{r^2}{Vol_g(B_g(p,r))^{2/n}} \int (|\nabla f|^2 + r^{-2} f^2),
\end{equation}
for any $W^{1,2}$ function $f$ compactly supported in $B_g(p,r)$.
\end{lem}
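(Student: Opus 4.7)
The plan is to follow the classical route of deducing a local Sobolev inequality from two standard consequences of a lower Ricci curvature bound: volume doubling and a scale-invariant $L^2$ Poincar\'e inequality. This is essentially the original argument of Saloff-Coste, and in parallel of Grigor'yan.

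The two ingredients are: (i) Bishop-Gromov volume comparison under $Ric \geq -(n-1)Kg$, which produces a doubling constant of the form $e^{C(n)(1+\sqrt{K}r)}$ for concentric balls contained in $B_g(p,r)$; and (ii) a Neumann Poincar\'e inequality
\[
\int_{B_g(x,s)} |f - f_{B_g(x,s)}|^2 \, dV \leq e^{C(n)(1+\sqrt{K}r)} \, s^2 \int_{B_g(x,s)} |\nabla f|^2 \, dV
\]
for every ball $B_g(x,s) \subset B_g(p,r)$. Once both are in hand, the local Sobolev inequality follows from the Grigor'yan-Saloff-Coste equivalence between the Sobolev inequality and the doubling-plus-Poincar\'e conditions, via a telescoping ball argument combined with Moser-type iteration. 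For $f$ compactly supported in $B_g(p,r)$, one extends by zero in order to kill the mean-value term; the auxiliary $r^{-2}f^2$ term on the right-hand side then appears naturally when one converts a Poincar\'e-type estimate into a Sobolev-type estimate without a mean, by interpolating on the annular region where $f$ is allowed to be nonzero.

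The main obstacle is step (ii). Bishop-Gromov is immediate, and the final functional-analytic deduction is by now entirely standard. But the scale-invariant Poincar\'e inequality under only a lower Ricci bound is the genuine geometric/analytic content: the Bochner formula alone does not close up into an $L^2$ inequality, so one must appeal either to the Cheeger-Colding segment inequality or to a heat-semigroup (Buser-type) argument. Careful bookkeeping of constants is required to ensure that all dependence on the geometry enters through the single prefactor $e^{C(1+\sqrt{K}r)} r^2/Vol_g(B_g(p,r))^{2/n}$ displayed in the statement, and in particular that no implicit $Vol_g(B_g(p,r))$ contamination survives in the doubling constant or in the Poincar\'e constant beyond what is already absorbed into the exponential factor.
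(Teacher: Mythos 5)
The paper does not prove this lemma; it is quoted verbatim as a known result and attributed to Saloff-Coste \cite{saloff1992uniformly}, to be used as a black box in the Nash--Moser iterations later on. So there is no ``paper's own proof'' against which to compare.

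That said, your outline is indeed the standard route by which this inequality is established in the literature, and it matches the structure of Saloff-Coste's own argument: a lower Ricci bound gives (i) Bishop--Gromov volume doubling with constant $e^{C(n)(1+\sqrt{K}r)}$ and (ii) a scale-invariant $L^2$ Neumann--Poincar\'e inequality on balls with the same type of constant, and doubling plus Poincar\'e is equivalent (Grigor'yan, Saloff-Coste) to a family of local Sobolev/Nash inequalities of exactly the displayed form, with the volume normalization $Vol_g(B_g(p,r))^{-2/n}$ and the lower-order $r^{-2}f^2$ term appearing precisely as you describe. Two remarks on completeness. First, you are right to flag the Poincar\'e inequality as the genuine content: Bochner alone does not give it, and one needs either the Buser-type heat-kernel/cutting argument or the segment inequality of Cheeger--Gromov/Cheeger--Colding; a complete write-up would have to execute one of these, keeping track of how $\sqrt{K}r$ enters. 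Second, the passage from Poincar\'e-plus-doubling to the Sobolev inequality with the explicit $e^{C(1+\sqrt{K}r)}$ prefactor is itself a nontrivial iteration (e.g.\ the ``$L^1$ pseudo-Poincar\'e to Nash to Sobolev'' chain), so merely invoking ``the Grigor'yan--Saloff-Coste equivalence'' leaves the quantitative claim unverified. As a plan it is correct and is the right plan; as a proof it would need both of those steps fleshed out with the constant bookkeeping you mention.
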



In the following sections of this paper, we need cut-off functions with bounded higher order (up to three) derivatives. A key ingredient is the smooth distance-like function with higher derivative bounds constructed by Tam \cite{tam}, see also \cite{chow2010ricci}.
\begin{lem}[L.-F. Tam]\label{lemma: distance like function with higher derivative bounds}
Let $B(p,r)$ be a geodesic ball properly embedded in an $n$-dimensional complete Riemannian manifold $(M,g)$, suppose $r\geq 1$. There exist constants $C_0$ and $C_1$ depending only on $n$, such that if $\sup_{B(p,r)}|Rm|\leq 1$ and $\sup_{B(p,r)}|\nabla Rm|\leq 1$, then there is a function $f: M \to \mathbb{R}$ with the following bounds on $B(p,2r/3)$:

(i) $d_g(x,p)+1 \leq f(x) \leq d_g(x,p)+C_0$;

(ii) $\sup_M|\nabla^m f(x)| \leq C_1$, $m=1,2,3$.
\end{lem}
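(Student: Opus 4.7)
The plan is to construct $f$ by mollifying the distance function $\rho(x) := d_g(x,p)$, which is $1$-Lipschitz and smooth off the cut locus, using a compactly supported averaging kernel pulled back by the exponential map. Fix a smooth nonnegative radial bump $\eta:[0,\infty)\to[0,\infty)$ supported in $[0,1/8]$ with $\int_{B(0,1/8)\subset\mathbb{R}^n}\eta(|v|)\,dv = 1$, and set
\[
f(x) \;=\; c_0 + \int_{T_x M} \rho(\exp_x(v))\,\eta(|v|)\,dv,
\]
where $c_0$ is a dimensional constant chosen so that (i) holds. Since $r\geq 1$ and $|Rm|\leq 1$ on $B(p,r)$, the injectivity radius is bounded below on $B(p,2r/3)$ (by Cheeger–Gromov–Taylor), so the support of the kernel lies away from the cut locus of $\exp_x$ for $x\in B(p,2r/3)$, and the integral is well defined.

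For (i), the Lipschitz bound $|\rho(\exp_x(v)) - \rho(x)|\leq |v|\leq 1/8$ together with $\int\eta(|v|)\,dv = 1$ gives $|f(x) - c_0 - \rho(x)|\leq 1/8$; choosing $c_0$ appropriately yields the two-sided estimate $d_g(x,p)+1\leq f(x)\leq d_g(x,p)+C_0$ with $C_0=C_0(n)$.

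For (ii), I differentiate $f$ in $x$ by transferring the integration to a fixed reference tangent space using parallel transport along short geodesics. The resulting $x$-derivatives fall on three objects: the mollifier $\eta$ (bounded by construction), the volume density of the exponential map (whose first two derivatives are controlled by $|Rm|$ and $|\nabla Rm|$ via the Jacobi equation), and the map $x\mapsto \exp_x(v)$ itself (whose $k$-th derivative at fixed $v$ is governed by the Jacobi equation and its derivatives, and so is controlled by $|Rm|,\ldots,|\nabla^{k-1}Rm|$). Because we only take up to three derivatives of $f$, only $|Rm|$ and $|\nabla Rm|$ enter, and the hypotheses $\sup_{B(p,r)}|Rm|\leq 1$ and $\sup_{B(p,r)}|\nabla Rm|\leq 1$ are exactly what is needed to get $|\nabla^m f|\leq C_1(n)$ for $m=1,2,3$ on $B(p,2r/3)$.

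The main technical obstacle is making the interchange of differentiation and integration rigorous in the presence of the cut locus of $\exp_x$ and the non-smoothness of $\rho$ along the cut locus of $p$. This is handled by combining the injectivity-radius estimate above (which removes singularities of $\exp_x$ from the support of $\eta$) with the observation that $\rho$ is smooth outside a set of measure zero and is $1$-Lipschitz globally, so that derivatives may be moved entirely onto $\eta$ and the volume form via an integration-by-parts argument in the exponential chart; the precise execution is carried out in Tam \cite{tam}.
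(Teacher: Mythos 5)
The key step in your proposal is the claim that $r\geq1$ and $|Rm|\leq1$ on $B(p,r)$ yield a lower bound on the injectivity radius on $B(p,2r/3)$ via Cheeger--Gromov--Taylor. That theorem does \emph{not} give an injectivity radius bound from a curvature bound alone: it also needs a volume lower bound (noncollapsing). A sequence of flat tori of shrinking diameter, or a thin flat cylinder, has $|Rm|=0$ but arbitrarily small injectivity radius, so no lower bound exists under the hypotheses of the lemma. Without a uniform injectivity radius lower bound, the exponential-chart mollification you propose cannot deliver uniform second and third derivative bounds: to be able to push derivatives onto $\eta$ and the volume density you need $\exp_x$ to be a diffeomorphism on the support of the kernel, and even if the mollifier scale $\epsilon$ is shrunk to fit inside the injectivity radius, the Hessian and third derivative of the mollified function pick up factors of $\epsilon^{-1}$ and $\epsilon^{-2}$, which blow up as the injectivity radius does. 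This is precisely the obstruction that the paper's (Tam's) argument is designed to avoid.

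The paper instead takes a Greene--Wu distance-like function as initial data and evolves it by the heat equation (using a Dirichlet heat kernel after cutting off to $B(p,r)$). The reason this works with \emph{only} curvature hypotheses and no noncollapsing assumption is that the required heat kernel Gaussian upper bound and the resulting $C^1$--$C^3$ estimates for the solution depend only on the Ricci lower bound and on $|Rm|$, $|\nabla Rm|$; no local coordinate chart, hence no injectivity radius control, is needed. Your Riemannian-convolution idea is essentially the Greene--Wu construction, which is the \emph{input} to the paper's proof, but it gives only $C^0$ and Lipschitz control uniformly; the heat flow step is what upgrades this to the $|\nabla^m f|\leq C_1(n)$, $m=1,2,3$, claimed in part (ii). Your proposal skips this step and cannot be repaired as written without adding a noncollapsing hypothesis that the lemma does not have.
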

\begin{proof}
The idea is to evolve a smooth distance-like function constructed in \cite{greene1979c} by the heat equation. We refer to Proposition 26.49 and Remark 26.50 in \cite{chow2010ricci} for a complete proof when $r=\infty$. Note that the heat kernel upper bound used in this proof can be obtained locally, and the bounds on $|Rm|$ and $|\nabla Rm|$ are also used locally to estimate higher derivatives of $f$. Therefore we can simply cut off the initial data on $B(p,r)$ and convolute with a Dirichlet heat kernel, then the same proof as in \cite{chow2010ricci} yields the lemma.
\end{proof}

The desired cut-off function is constructed in the following lemma.
\begin{lem}\label{lemma: cut-off function on each ball}
Let $B(r)$ be a geodesic ball with radius $r$ in a complete Riemannian manifold $(M,g)$ with dimension $n$. Suppose $\sup_{B(r)} |Rm|\leq K$, $\sup_{B(r)} |\nabla Rm| \leq K^{3/2}$, and $r\geq \bar{r}/\sqrt{K}$, where $\bar{r}=4C_0$ for the same constant $C_0$ in Lemma \ref{lemma: distance like function with higher derivative bounds}. Then there is a smooth function $0\leq \phi\leq 1$ compactly supported in $B(r)$, with $\phi(x)\equiv 1$ on $B(r/4)$, and $\sup_{B(r)}|\nabla^m \phi|\leq C K^{m/2}$, $m=1,2,3$, where $C$ depends only on $n$.
\end{lem}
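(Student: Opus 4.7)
The plan is to reduce to the setting of Lemma \ref{lemma: distance like function with higher derivative bounds} via a homothetic rescaling, compose Tam's function with a one-dimensional bump profile, and then truncate using a connected-component argument. Setting $\tilde g = K g$ converts the hypotheses into $|\widetilde{Rm}|_{\tilde g} \leq 1$ and $|\tilde\nabla \widetilde{Rm}|_{\tilde g} \leq 1$ on $B_{\tilde g}(p, \sqrt K r)$, with $\sqrt K r \geq \bar r = 4 C_0 \geq 1$. Lemma \ref{lemma: distance like function with higher derivative bounds} then produces a smooth $f: M \to \Real$ satisfying $d_{\tilde g}(x,p) + 1 \leq f(x) \leq d_{\tilde g}(x,p) + C_0$ on $B_{\tilde g}(p, 2\sqrt K r/3)$ and $|\tilde\nabla^m f|_{\tilde g} \leq C_1$ on all of $M$ for $m = 1, 2, 3$.

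Next, fix a smooth profile $\eta: \Real \to [0,1]$ with $\eta \equiv 1$ on $(-\infty, A]$ and $\eta \equiv 0$ on $[A + 3/4, \infty)$, where $A = \sqrt K r/4 + C_0$, with all derivatives of $\eta$ vanishing at the two endpoints of the transition interval and $|\eta^{(m)}|$ bounded by a universal constant for $m = 1, 2, 3$. Provisionally set $\phi_0 = \eta \circ f$; by the chain rule and Tam's estimates $|\tilde\nabla^m \phi_0|_{\tilde g} \leq C(n)$ for $m = 1, 2, 3$. For $x \in B_g(p, r/4) = B_{\tilde g}(p, \sqrt K r/4)$, property (i) of Tam's lemma gives $f(x) \leq A$, hence $\phi_0(x) = 1$ on $B_g(p, r/4)$ as required.

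Since Tam's bounds on $f$ are only known inside $B_{\tilde g}(p, 2\sqrt K r/3)$, the support of $\phi_0$ could in principle extend beyond $B_g(p, r)$. To remove the spurious piece, let $V$ be the connected component of $\{\phi_0 > 0\} \subseteq \{f < A + 3/4\}$ containing $p$, and redefine $\phi = \phi_0$ on $V$ and $\phi \equiv 0$ on $M \setminus V$. Any continuous curve in $V$ issuing from $p$ must stay inside $B_{\tilde g}(p, 2\sqrt K r/3)$: otherwise it would meet the sphere of radius $2\sqrt K r/3$, where Tam's lower bound gives $f \geq 2\sqrt K r/3 + 1 > A + 3/4$ (the final inequality uses $\sqrt K r \geq 4 C_0$), contradicting $f < A + 3/4$. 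Combined with the upper bound in (i), this forces $\bar V \subset B_{\tilde g}(p, A) \subset B_g(p, r/2)$, so $\bar V$ is compact in $B_g(p, r)$. Global smoothness of $\phi$ follows because $\eta$ and all its derivatives vanish at $A + 3/4$, so $\phi_0$ vanishes to infinite order at each point of $\partial V \subset \{f = A + 3/4\}$, ensuring that the piecewise definition of $\phi$ across $\partial V$ is smooth.

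Finally, I transfer the bounds back. Since $\tilde g = K g$ and $\phi$ is a scalar, one has $|\nabla^m \phi|_g = K^{m/2} |\tilde\nabla^m \phi|_{\tilde g}$ for each $m$, which yields $|\nabla^m \phi|_g \leq C(n) K^{m/2}$ on $B_g(p, r)$, completing the proof. The main technical subtlety is the smoothness of $\phi$ across $\partial V$; it is handled by arranging the bump profile $\eta$ to vanish to infinite order at the right endpoint of its transition region, so $\phi$ exhibits no jump across the boundary of the selected connected component.
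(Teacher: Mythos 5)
Your proof follows the same route as the paper's: rescale by $\tilde g = Kg$ so that Lemma \ref{lemma: distance like function with higher derivative bounds} applies, compose Tam's distance-like function $f$ with a one-dimensional bump, and rescale the derivative bounds back using $|\nabla^m\phi|_g = K^{m/2}|\tilde\nabla^m\phi|_{\tilde g}$. The one place where you go beyond what the paper writes is the connected-component truncation. Since property (i) of Lemma \ref{lemma: distance like function with higher derivative bounds} is only guaranteed on $B_{\tilde g}(p, 2\sqrt{K}r/3)$, the naive composition $\eta\circ f$ could in principle pick up spurious support where (i) fails; the paper dismisses this with ``one can easily check,'' while you remove it explicitly by restricting to the connected component $V$ of $\{\phi_0>0\}$ containing $p$, and you correctly observe that the restriction stays smooth because $\eta$ (and hence $\phi_0$) vanishes to infinite order along $\partial V \subset \{f = A+3/4\}$. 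This is a legitimate and welcome extra level of rigor. One small slip: when concluding $\bar V\subset B_{\tilde g}(p,A)$, it is the \emph{lower} bound $d_{\tilde g}(x,p)+1\leq f(x)$ of (i) that yields $d_{\tilde g}(x,p)<A-\tfrac14$, not the upper bound. The remaining arithmetic (e.g.\ $\tfrac{2}{3}\sqrt{K}r+1>A+\tfrac34$ using $\sqrt{K}r\geq 4C_0$, and $B_{\tilde g}(p,A)\subset B_g(p,r/2)$) checks out.
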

\begin{proof}
Let $0\leq \psi(s) \leq 1$ be a smooth function on $\mathbb{R}$ such that $\psi(s)=1$ when $s\leq \frac{1}{4}$, $\psi(s)=0$ when $s>\frac{1}{3}$, and $|\psi'(s)|+|\psi''(s)|+|\psi'''(s)|\leq 100$ for all $s$.
Let $f$ be the smooth distance-like function constructed in Lemma \ref{lemma: distance like function with higher derivative bounds} on $B(r)$ with the rescaled metric $\tilde{g}=Kg$. Define $\phi(x)=\psi((\sqrt{K}r+\bar{r})^{-1}f(x))$. One can easily check that $\phi$ is compactly supported on $B(r)$ and $\phi(x)=1$ for any $x\in B(r/4)$, and
\[|\nabla_{\tilde{g}}^m \phi(x)|_{\tilde{g}}\leq C(n), \quad m=1,2,3,\quad x\in B(r).\]
By the scaling property of covariant derivatives, we have
\[|\nabla_{g}^m \phi(x)|_{g}\leq C(n)K^{m/2}, \quad m=1,2,3,\quad x\in B(r).\]
%
\end{proof}
\section{Lifespan of Ricci flows on compact manifolds}\label{section: lifespan cpt case}

\subsection{Uniformly small Ricci curvature}\label{subsection: uniformly small ricci curvature}
Under the Ricci flow, the Riemann curvature tensor evolves by a parabolic equation
\begin{equation}
\frac{\partial }{\partial t} Rm = \Delta Rm + Rm * Rm.
\end{equation}
When the manifold is compact, the maximum principle implies the doubling-time estimate which plays a fundamental role in the analysis of the Ricci flow:
\begin{lem}[Hamilton \cite{hamilton1982three}]\label{lemma: doubling time estimate of Hamilton}
Let $g(t)$ be a Ricci flow solution on a compact manifold $M$ with dimension $n$. Let $T_d$ be the first time that $\sup_{M \times [0,T_d)} |Rm| =2 \sup_M |Rm| (0)$, then $T_d \geq \frac{c(n)}{\sup_M |Rm| (0)}$, where $c(n)$ is a constant depending only on the dimension.
\end{lem}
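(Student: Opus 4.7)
The plan is to reduce the tensor evolution to a scalar ODE for the spatial maximum of $|Rm|$, and then integrate. Starting from the stated evolution equation $\partial_t Rm = \Delta Rm + Rm * Rm$, I would first compute
\[
\partial_t |Rm|^2 = \Delta|Rm|^2 - 2|\nabla Rm|^2 + 2\langle Rm, Rm * Rm\rangle,
\]
and use the pointwise estimate $|\langle Rm, Rm * Rm\rangle| \leq c_1(n)|Rm|^3$ together with the favorable sign of the gradient term to obtain the clean inequality
\[
\partial_t |Rm|^2 \;\leq\; \Delta|Rm|^2 + c_1(n)\,|Rm|^3.
\]

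Next, since $M$ is compact, I would set $u(t) := \sup_{x \in M} |Rm|^2(x,t)$, which is locally Lipschitz in $t$. Applying Hamilton's scalar maximum principle (or equivalently, using $\Delta|Rm|^2 \leq 0$ at an interior spatial maximum) converts the PDE inequality into the ODE comparison
\[
\tfrac{d u}{dt} \;\leq\; c_1(n)\,u^{3/2},
\]
interpreted in the upper Dini sense. Separation of variables gives $\tfrac{d}{dt}u^{-1/2} \geq -c_1(n)/2$, so writing $K_0 := \sup_M |Rm|(0)$,
\[
u(t)^{-1/2} \;\geq\; K_0^{-1} - \tfrac{c_1(n)}{2}\,t,
\]
and hence $\sup_M |Rm|(t) \leq \bigl(K_0^{-1} - c_1(n)\,t/2\bigr)^{-1}$ as long as the denominator is positive. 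Requiring the right-hand side to stay below $2K_0$ forces $t \leq 1/(c_1(n)\,K_0)$, which yields $T_d \geq c(n)/K_0$ with $c(n) = 1/c_1(n)$.

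There is no real obstacle here — this is the classical short-time estimate that underlies the entire Ricci flow theory on compact manifolds. The only mild technical point is that $|Rm|^2$ need not be smooth at points where $Rm$ vanishes; this is routinely handled by working with $|Rm|^2 + \varepsilon$ and sending $\varepsilon \to 0$, or equivalently by invoking the Lipschitz version of the parabolic maximum principle on compact manifolds. Note also that the same strategy — derive a clean PDE inequality for a scalar curvature quantity, pass to the spatial supremum, and integrate an ODE — is precisely what will have to be refined in Theorem \ref{theorem: improved doubling time estimate}, where the extra smallness of $Ric$ and $\nabla^m Ric$ must be exploited to replace the $|Rm|^3$ reaction term by something substantially smaller.
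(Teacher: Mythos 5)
The paper does not prove this lemma; it cites it directly from Hamilton, so there is no in-paper argument to compare against. Your proof is the standard one — derive $\partial_t|Rm|^2 \leq \Delta|Rm|^2 + c_1(n)|Rm|^3$, pass to the spatial supremum via Hamilton's scalar maximum principle, and integrate the resulting Riccati-type ODE — and it lands correctly on $T_d \geq c(n)/\sup_M|Rm|(0)$. One small imprecision: $|Rm|^2$ is already a smooth function of $(x,t)$ (a quadratic polynomial in the components of $Rm$), so no $\varepsilon$-regularization is needed at zeros of $Rm$; the only non-smooth object is the time-function $u(t)=\sup_M|Rm|^2(\cdot,t)$, which is Lipschitz and handled exactly by the Dini-derivative version of the maximum principle you invoke.
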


Theorem \ref{theorem: improved doubling time estimate} can be viewed as an improved version of the above result of Hamilton. Although it can be implied by the more general Proposition \ref{theorem: lifespan estimate - compact case - slow curvature growth} in the following subsection, we would like to present its proof here as a more economic illustration of how the smallness of the Ricci curvature is used.

Basic ingredients in the proof are the well-known Shi's estimate, and it's modified version which is stated here as a lemma.
\begin{lem}[Lu-Tian \cite{lu2005uniqueness}]\label{lemma: modified shi's estimates}
For any $\alpha, K>0$, and any integers $ n \geq 2, l \geq 0, m \geq 1$, there exists a constant $C$ depending only on $\alpha, n, l$ and $m$ such that if $M$ is a manifold of dimension $n$, $p \in M$, and $g(t), t\in [0,T]$, $0< T < \frac{\alpha }{K}$, is a solution to the Ricci flow on an open neighbourhood $\mathcal{U}$ containing $\bar{B}_{g(0)}(p,r)$, $0< r < \frac{\alpha}{\sqrt{K}}$, and if
\[|Rm|(x,t) \leq K \quad for \hspace{2pt} all \hspace{2pt} (x,t) \in \mathcal{U} \times [0,T],
\]
and
\[|\nabla ^i Rm|(x,0) \leq K^{1+\frac{i}{2}} \quad for \hspace{2pt} all \hspace{2pt} x \in \mathcal{U}, \hspace{2pt} and \hspace{2pt} 1 \leq i \leq l,\]
then
\[ | \nabla^m Rm|(y,t) \leq \frac{CK^{1+\min \{ m, l\}/2}}{t^{(m-l)_+/2}}\]
for all $(y,t) \in B_{g(0)}(p, \frac{r}{2}) \times (0, T]$.
\end{lem}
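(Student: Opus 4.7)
The plan is to follow the classical Bernstein--Shi derivative-estimate technique for Ricci flow, modified so that the hypothesized initial control on $\nabla^i Rm$ for $1\leq i\leq l$ is exploited to eliminate the negative power of $t$ at those levels. The starting point is the parabolic evolution identity, derived from the commutation formulas and $\partial_t Rm = \Delta Rm + Rm * Rm$:
\[
(\partial_t - \Delta)|\nabla^i Rm|^2 \leq -2|\nabla^{i+1}Rm|^2 + c(n,i)\sum_{j+k=i}|\nabla^j Rm|\,|\nabla^k Rm|\,|\nabla^i Rm|.
\]
The bad cross-term at level $i$ will be absorbed by the good dissipation term at level $i{+}1$ after an appropriate linear combination is set up.

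For $1\leq m\leq l$ I would work without time weights. Set
\[
F_m = \sum_{i=0}^{m} a_{m,i}\, K^{m-i}|\nabla^i Rm|^2,
\]
choosing the positive weights $a_{m,i}$ inductively so that every cross-term at level $i$ is dominated by $-2 a_{m,i+1} K^{m-i-1}|\nabla^{i+1}Rm|^2$ in $(\partial_t-\Delta)F_m$. After this bookkeeping one has $(\partial_t-\Delta)F_m \leq C(n,m,\alpha)\,K\,F_m$. Multiplying by $\phi^{2m}$ for a standard parabolic cut-off supported in $B_{g(0)}(p,r)$ and applying the maximum principle on $B_{g(0)}(p,r)\times[0,T]$, with the scale conditions $r<\alpha/\sqrt{K}$, $T<\alpha/K$ making every cut-off-derivative loss controllable by the curvature scale (with constants depending on $\alpha$), yields the time-independent bound $|\nabla^m Rm|(y,t)\leq CK^{1+m/2}$ on $B_{g(0)}(p,r/2)$, precisely the conclusion for $m\leq l$.

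For $m>l$ the initial datum no longer controls $\nabla^{l+1}Rm$, so time weights must be inserted starting at level $l$. Define
\[
G_m = \sum_{i=l}^{m} b_{m,i}\, t^{\,i-l} K^{m-i}|\nabla^i Rm|^2,
\]
using the output of the previous stage as the $i=l$ starting bound. The $\partial_t$ derivative now produces an extra $b_{m,i}(i-l)t^{\,i-l-1}K^{m-i}|\nabla^i Rm|^2$, which combines after Cauchy--Schwarz with the dissipation $-2b_{m,i-1}t^{\,i-1-l}K^{m-i+1}|\nabla^i Rm|^2$ from the preceding summand, so the $b_{m,i}$ can be picked inductively to make everything absorbable. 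The same cut-off argument applied to $\phi^{2m}G_m$, together with the fact that the weights $t^{i-l}$ vanish at $t=0$ for $i>l$ (so only the $i=l$ datum, already controlled by the previous stage, contributes at initial time), produces $t^{\,m-l}|\nabla^m Rm|^2\leq CK^{m+2-l}$ on $B_{g(0)}(p,r/2)\times(0,T]$, which is the stated estimate.

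The main technical obstacle is the combinatorial bookkeeping: one must choose all of the coefficients $a_{m,i}$, $b_{m,i}$, the cut-off power $2m$, and the constant $C$ uniformly in $m$ and $t$, and also track carefully how the dependence of $C$ on $\alpha$ arises from the scale relations $r<\alpha/\sqrt{K}$ and $T<\alpha/K$ when absorbing cut-off losses. Once this is organized correctly the maximum principle closes the argument in one step, exactly as in Shi \cite{shi1989deforming} and its refinement in Lu--Tian \cite{lu2005uniqueness}.
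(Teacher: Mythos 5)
The paper does not prove this lemma; it is cited directly from Lu--Tian \cite{lu2005uniqueness} as a known refinement of Shi's local derivative estimates, so there is no in-paper argument to compare with. Your strategy --- Bernstein--Shi Bochner identities plus a cut-off maximum principle, with the time weight $t^{(m-l)_+}$ inserted only from level $l$ onward so that the hypothesized initial data furnishes the $t=0$ boundary term --- is the right one.

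There are, however, two concrete gaps in the execution. First, the multi-level quantity $F_m=\sum_{i=0}^{m}a_{m,i}K^{m-i}|\nabla^i Rm|^2$ does not satisfy $(\partial_t-\Delta)F_m\le C K F_m$ merely through a clever choice of weights. The reaction terms at level $i$ are $\sum_{j+k=i}|\nabla^j Rm|\,|\nabla^k Rm|\,|\nabla^i Rm|$; the contributions with $j=0$ or $k=0$ are $\le 2K|\nabla^i Rm|^2$ and absorb cleanly, but for $0<j\le k<i$ the summand is cubic in the derivative norms, and after Cauchy--Schwarz one is left with the quartic quantity $K^{m-i-1}|\nabla^j Rm|^2|\nabla^k Rm|^2$, which is not of the form $K\cdot(\text{summand of }F_m)$. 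It becomes one only after the lower-level estimates $|\nabla^k Rm|\lesssim K^{1+k/2}$ on $(0,T]$ have already been established --- which is exactly what Shi's argument does by induction on $m$. Once that induction is in place, a two-level combination such as $(\Lambda K^{m+1}+|\nabla^{m-1}Rm|^2)|\nabla^m Rm|^2$ already closes, and the full $\sum_{i=0}^{m}$ is unnecessary; if you want a genuinely one-shot multi-level argument you must accept a Riccati-type inequality $(\partial_t-\Delta)F\le CF^2$ as in Bando's real-analyticity estimate, which your sketch does not set up. Second, the weights $t^{i-l}K^{m-i}$ in $G_m$ are dimensionally inconsistent: under the parabolic rescaling $g\mapsto\mu^2 g$, $t\mapsto\mu^2 t$, $K\mapsto\mu^{-2}K$, $|\nabla^i Rm|^2\mapsto\mu^{-(4+2i)}|\nabla^i Rm|^2$, the summand $t^{i-l}K^{m-i}|\nabla^i Rm|^2$ scales as $\mu^{2i-2l-2m-4}$, a power depending on $i$, so the terms cannot be compared to one another or to $K G_m$ with $i$-independent constants. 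The scaling-consistent version carries a single fixed power of $K$, e.g. $G_m=\sum_{i=l}^{m}b_{m,i}\,t^{i-l}|\nabla^i Rm|^2$, and the $K$-powers in the conclusion should then emerge from the initial bound $|\nabla^l Rm|(\cdot,0)\le C K^{1+l/2}$ (itself supplied by the $m\le l$ stage) rather than being inserted by hand.
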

Recall that in Theorem \ref{theorem: improved doubling time estimate} we have uniform curvature assumptions that $\sup_M|Rm|\leq K$ and $|\nabla^m Ric|\leq K^{1+m/2}e^{-AK}$ for $m=0,1,2$, where $A\geq 0$. And we use $T_d$ to denote the first time that $\sup_M|Rm|(t)$ becomes $2K$.

\begin{proof}[Proof of Theorem \ref{theorem: improved doubling time estimate}]
We proof this theorem in three steps.

\noindent Step 1: Control the first two derivatives of $Rm$.

By Lemma \ref{lemma: control nabla Rm by Rm and nabla nabla Ric}, $\sup_M|\nabla Rm|(0)\leq C(n)K^{3/2}$.

For $t < 1/K$, and any $x\in M$, the modified Shi's estimate \ref{lemma: modified shi's estimates} implies that
\[|\nabla Rm|(x,t)\leq C(n)K^{3/2},\]
and
\[|\nabla^2 Rm|(x,t)\leq \frac{C(n)K^{3/2}}{t^{1/2}}.\]
For $1/K \leq t \leq T_d$, we can apply Shi's estimate on the time interval $(t-1/K, t)$ to get
\[|\nabla Rm|(x,t)\leq C(n)K^{3/2},\]
and
\[ |\nabla^2 Rm|(x,t)\leq C(n)K^2,\]
for any $x\in M$.

\noindent Step 2: Control $Ric$ and its first two derivatives.

Along the Ricci flow, the Ricci curvature evolves by
\begin{equation}\label{equation: evolution equation of Ric along the Ricci flow}
\frac{\partial}{\partial t} Ric =\Delta Ric + Rm * Ric.
\end{equation}
Hence
\[\frac{\partial}{\partial t}|Ric| \leq \Delta |Ric| + C_1K |Ric|,\]
as long as $t \leq T_d$, where $C_1$ depends only on $n$. By the maximum principle we can estimate
\[\sup_M|Ric|(t) \leq \sup_M|Ric|(0) e^{C_1 K t} \leq Ke^{C_1Kt-AK}.\]

The first derivative of the Ricci curvature evolves by
\begin{equation}\label{equation: evolution equation of the first covariant derivative of the Ricci curvature}
\frac{\partial}{\partial t} \nabla Ric =\Delta \nabla Ric + Rm * \nabla Ric + \nabla Rm * Ric.
\end{equation}
For $t\leq T_d$ we have
\[\frac{\partial}{\partial t}|\nabla Ric|\leq \Delta |\nabla Ric| + C_2 K |\nabla Ric| + C_2 K^{5/2}e^{C_1Kt-AK},\]
where $C_2$ depends only on $n$. The maximum principle implies
\[
\begin{split}
\sup_M|\nabla Ric|(t)\leq & (\sup_M|\nabla Ric|(0)+ C_2C_1^{-1} K^{3/2}e^{C_1Kt-AK}) e^{C_2 K t}\\
\leq & C_3 K^{3/2}e^{C_3 K t - AK},
\end{split}
\]
for some $C_3$ depending only on $n$.

The second derivative of the Ricci curvature evolves by
\begin{equation}\label{equation: evolution equation of the second covariant derivative of the Ricci curvature}
\frac{\partial}{\partial t} \nabla^2 Ric =\Delta \nabla^2 Ric + Rm * \nabla^2 Ric + \nabla Rm * \nabla Ric + \nabla ^2 Rm * Ric.
\end{equation}
For $t\leq 1/K$, we have, for a constant $C_4$ depending only on $n$, that
\[
\begin{split}
\frac{\partial}{\partial t}|\nabla ^2 Ric|\leq &\Delta |\nabla^2 Ric| + C_4 K |\nabla^2 Ric|+ C_4 K^{3}e^{C_3Kt-AK} \\
&+ \frac{C_4K^{5/2}}{t^{1/2}} e^{C_1 K t -AK}.
\end{split}
\]
Since
\[\int_0^{1/K} \frac{C_4K^{5/2}}{t^{1/2}} e^{C_1 K t -AK} dt \leq 2C_4 e^{C_1} K^2 e^{-AK},\]
we can use the maximum principle as before to show that
\[\sup_M |\nabla^2 Ric|(t) \leq C_5 K^2 e^{C_5Kt-AK},\]
for some constant $C_5$ depending only on $n$.

For $1/K \leq t \leq T_d$ we have
\[
\begin{split}
\frac{\partial}{\partial t}|\nabla ^2 Ric|\leq \Delta |\nabla^2 Ric| + C_4 K |\nabla^2 Ric|+ C_4 K^{3}e^{C_3Kt-AK}+ C_4K^3 e^{C_1 K t -AK}.
\end{split}
\]
Apply the maximum principle with the initial value
\[\sup_M |\nabla^2 Ric|(1/K) \leq C_5 K^2 e^{C_5-AK},\]
we can find a constant $C_6$ depending only on $n$, such that
\[\sup_M|\nabla^2 Ric|(t) \leq C_6 K^2 e^{C_6 Kt -AK}, \quad t\in[1/K, T_d].\]
Therefore we can take $C_7=\max\{C_5, C_6\}$ to get a uniform estimate of $|\nabla^2 Ric|$,
\[\sup_M|\nabla^2 Ric|(t) \leq C_7 K^2 e^{C_7 Kt -AK}, \quad t\in[0, T_d].\]

\noindent Step 3: Control $Rm$ by $Ric$ and $\nabla^2 Ric$

Under the Ricci flow, the Riemman curvature tensor evolves by
\begin{equation}\label{equation: time derivative of Rm in terms of the second derivative of Ric}
\begin{split}
\frac{\partial}{\partial t} R_{ijk}^l =& -\nabla_i\nabla_k R_j^l + \nabla_i\nabla^l R_{jk} +\nabla_j\nabla_k R_i^l -\nabla_j\nabla^l R_{ik} \\
& + g^{lp} ( R_{ijk}^q R_{pq} + R_{ijp}^q R_{kq}),
\end{split}
\end{equation}
which implies
\[\frac{\partial}{\partial t} |Rm| \leq 6 |Rm||Ric| + 4 |\nabla^2 Ric|\]
at any point $x\in M$. Hence by Gronwall's inequality we have a pointwise estimate
\[
\begin{split}
|Rm|(t) \leq &\left(|Rm|(0)+ 4\int_0^t |\nabla^2 Ric|\right)\exp({6\int_0^t |Ric|}). \\
\end{split}
\]
By the estimates in Step 2, we have
\[\int_0^t |Ric| \leq e^{-AK}(e^{C_1Kt}-1),\]
and
\[\int_0^t |\nabla^2 Ric| \leq Ke^{-AK}(e^{C_7 Kt}-1).\]
Hence
\[\sup_M |Rm|(t) \leq K (1+4e^{-AK}(e^{C_7 Kt}-1))\exp(6e^{-AK}(e^{C_1Kt}-1)).\]
Let $\alpha = \min \{ \frac{\ln (4/3)}{6}, \frac{1}{8}\}$, then the following Lemma \ref{lemma: minimum of a continuous function} implies that for any $t < \frac{\alpha A}{(\alpha +1)(C_1+C_7)}$ which is independent of $K$, we have
\[t < \frac{\ln (\alpha e^{A K} + 1)}{ (C_1+C_7)K},\]
hence
\[e^{-AK}(e^{C_1Kt}-1) < \frac{\ln (4/3)}{6},\]
and
\[e^{-AK}(e^{C_7 Kt}-1) < \frac{1}{8},\]
then
\[\sup_M |Rm|(t) < K (1+ \frac{1}{2}) \frac{4}{3} = 2K.\]
Therefore we must have $T_d \geq \frac{\alpha A}{(\alpha +1)(C_1+C_7)}$. Together with Lemma \ref{lemma: doubling time estimate of Hamilton}, this lower bound yields the claimed result in the theorem.
\end{proof}

It is elementary to prove the following:
\begin{lem}\label{lemma: minimum of a continuous function}
For positive constants $\alpha, \beta > 0$, let
\[h(\alpha, \beta)(s) = \frac{\ln (\alpha e^{\beta s} + 1)}{ s}, \quad s \in (0, \infty).
\]
Then
\[\inf_{s \in (0, \infty)} h(\alpha, \beta)(s) \geq \frac{\alpha \beta}{\alpha + 1}.\]
\end{lem}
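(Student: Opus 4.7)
The plan is to reduce the inequality to showing that the numerator $g(s) := \ln(\alpha e^{\beta s}+1)$ grows at least linearly with slope $\tfrac{\alpha\beta}{\alpha+1}$ out of $s=0$. First I would compute
\[
g'(s) = \frac{\alpha\beta e^{\beta s}}{\alpha e^{\beta s}+1} = \frac{\alpha\beta}{\alpha + e^{-\beta s}},
\]
and observe that the right-hand expression is an increasing function of $s$ on $[0,\infty)$ since $e^{-\beta s}$ is decreasing. Hence $g'(s) \geq g'(0) = \tfrac{\alpha\beta}{\alpha+1}$ for every $s \geq 0$.

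Next I would integrate this differential lower bound from $0$ to $s$ (or invoke the mean value theorem) to obtain
\[
g(s) \;\geq\; g(0) + \frac{\alpha\beta}{\alpha+1}\,s \;=\; \ln(\alpha+1) + \frac{\alpha\beta}{\alpha+1}\,s,
\]
valid for all $s > 0$. Dividing by $s$ yields
\[
h(\alpha,\beta)(s) \;=\; \frac{g(s)}{s} \;\geq\; \frac{\alpha\beta}{\alpha+1} + \frac{\ln(\alpha+1)}{s} \;>\; \frac{\alpha\beta}{\alpha+1},
\]
since $\alpha > 0$ gives $\ln(\alpha+1) > 0$. Taking the infimum over $s \in (0,\infty)$ delivers the claimed bound.

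There is essentially no obstacle: the whole argument is a one-line monotonicity observation about $g'$ followed by integration. It is worth noting in passing that the bound is sharp in the asymptotic sense $h \to \beta$ as $s\to\infty$ and $h\to\infty$ as $s\to 0^+$, so the constant $\tfrac{\alpha\beta}{\alpha+1}$ (strictly less than $\beta$) is a genuine lower bound but not the infimum itself; the strict inequality in the proof above reflects this.
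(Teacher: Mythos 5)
Your proof is correct: the key observation that $g'(s)=\alpha\beta/(\alpha+e^{-\beta s})$ is increasing, hence bounded below by $g'(0)=\alpha\beta/(\alpha+1)$, followed by integration and division by $s$, gives exactly the claimed bound (indeed strictly). The paper states the lemma without proof as "elementary," and your argument is precisely the kind of short convexity/monotonicity computation the author had in mind.
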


\subsection{Non-uniform smallness of the Ricci curvature}\label{subsection: non-uniforl smallness of ricci}
The uniform smallness assumption on the Ricci curvature in Theorem \ref{theorem: improved doubling time estimate} is very restrictive, for example, it would force a complete Riemannian manifold with unbounded curvature to be Ricci-flat. Fortunately, we can still obtain a lower estimate of the lifespan of Ricci flows when their initial data satisfies the following assumption, where the Ricci curvature and its first two derivatives are assumed to be small only in regions where $|Rm|$ is large. This assumption can be implied by the conditions in Theorem \ref{theorem: existence of ricci flow strong assumption} and \ref{theorem: transfer rate estimate}. Though lengthy, it is actually convenient to use in our arguments.
\begin{assumption}\label{initial good cover assumption}
(a) $M$ is covered by geodesic balls $B_i=B_g(x_i, r_i)$, $i=1, 2, 3, ... N$, where $N$ is finite when $M$ is compact and $N=\infty$ when $M$ is noncompact. We denote $\Bhat_i= B_g(x_i, 16r_i)$.

(b) Let $\bar{r}=4C_0$, where $C_0$ is the same constant as in Lemma \ref{lemma: distance like function with higher derivative bounds}, which depends only on $n$. For each $\Bhat_i$, there is a positive number $K_i \geq \max \{\bar{r}^2/r_i^2, 1\}$, such that \\ $\sup_{\Bhat_i} |Rm| \leq K_i$.

(c) There are positive constants $A$ and $\bar{K}$, such that for each $\Bhat_i$, $\sup_{\Bhat_i} |\nabla ^m Ric| \leq K_i^{m/2}P_i$, $m=0,1,2$, where $P_i=(n-1) K_i e^{A(\bar{K}-K_i)}$.

(d) There is a positive constant $\Gamma$, such that if $\Bhat_j \cap \Bhat_i \neq \emptyset$, then $K_j \leq K_i + \Gamma$.

(e) The number of $\Bhat_i$'s intersecting simultaneously is at most $I < \infty$.

\end{assumption}

A few remarks are at hand:
\begin{rem}
Assumption (c) is trivial if $\sup_{\Bhat_i} |Rm|\leq \bar{K}$ and $\sup_{\Bhat_i}|\nabla^m Ric|\leq (n-1)\bar{K}^{1+m/2}$, $m=0,1,2$, in which case we can take $K_i=\max\{\bar{K}, 1\}$.
\end{rem}
\begin{rem}
Consider a chain of connected balls $\Bhat_i$, $i=1,2,...,k,...$, the curvature upper bound grows linearly in $k$ along this chain, while the distance grows roughly as the partial sum $\sum_{i=1}^k \frac{1}{\sqrt{i}}$, which diverges at the rate of $\sqrt{k}$. Hence $|Rm|$ is allowed to have quadratic growth when $M$ is noncompact.
\end{rem}
\begin{rem}\label{remark: uniformly bounded ricci curvature}
Under Assumption \ref{initial good cover assumption}, the manifold actually has uniformly bounded Ricci curvature $|Ric|\leq (n-1) A^{-1}e^{A\bar{K}-1}$.
\end{rem}
\begin{prop}\label{theorem: lifespan estimate - compact case}
Let $(M,g)$ be an $n$-dimensional compact Riemannian manifold satisfying Assumption \ref{initial good cover assumption}. Then there is a constant $T_0$ depending only on $n, I, \Gamma, A$ and $\bar{K}$, such that the lifespan of the Ricci flow solution on $M$ with initial metric $g$ is greater than $T_0$. Moreover,
\[\sup_{\Bhat_i} |Rm|_{g(t)} \leq 2(1+\Gamma)K_i \hspace{4pt} for \hspace{4pt} any \hspace{4pt} t\in [0,T_0] \hspace{4pt} and \hspace{4pt}i=1,2,..., N.\]
\end{prop}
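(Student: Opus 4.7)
The plan is to mirror the three-step structure of the proof of Theorem \ref{theorem: improved doubling time estimate}, with every maximum-principle argument localized to an individual ball $\hat{B}_i$. The crucial enabling fact is that the cut-off functions $\phi_i$ supplied by Lemma \ref{lemma: cut-off function on each ball} satisfy $|\nabla^k \phi_i| \leq C(n)\, K_i^{k/2}$ for $k=1,2,3$, which is exactly the scaling needed so that cut-off error terms balance the local curvature bound $K_i$ on $\hat{B}_i$. I set up a continuity argument: let $T^*$ be the supremum of times $T>0$ for which $\sup_{\hat{B}_i}|Rm|_{g(t)} \leq 2(1+\Gamma) K_i$ holds for every $i$ and every $t \in [0,T]$. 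It then suffices to show $T^* \geq T_0$ for some $T_0 = T_0(n, I, \Gamma, A, \bar{K})$; I do this by recovering a strict inequality on $[0,T^*]$ whenever $T^* < T_0$, contradicting the maximality of $T^*$.

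Step 1 establishes derivative bounds for $Rm$. The running hypothesis gives $|Rm|_{g(t)} \leq 2(1+\Gamma) K_i$ on $\hat{B}_i$, and $r_i \geq \bar{r}/\sqrt{K_i}$ from condition (b) lets me apply the modified Shi's estimate (Lemma \ref{lemma: modified shi's estimates}) on each $\hat{B}_i$ to obtain $|\nabla Rm|_{g(t)} \leq C K_i^{3/2}$ and $|\nabla^2 Rm|_{g(t)} \leq C\max\{K_i^{3/2} t^{-1/2},\, K_i^2\}$ on a slightly smaller concentric ball containing $B_i$, where $C = C(n,\Gamma)$.

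Step 2 is the technical heart: inductively estimate $|\nabla^m Ric|$ for $m=0,1,2$ via the evolution equations (\ref{equation: evolution equation of Ric along the Ricci flow})--(\ref{equation: evolution equation of the second covariant derivative of the Ricci curvature}), applying the maximum principle to the weighted quantity $\phi_i^{2q}|\nabla^m Ric|^2$ on $\hat{B}_i$. At the spatial maximum the relation $\nabla(\phi_i^{2q}|\nabla^m Ric|^2) = 0$ converts the unwanted gradient term into an expression in $\nabla \phi_i / \phi_i$, and the remaining error terms --- products of $|\nabla^\ell \phi_i|$ with derivatives of $Rm$ and $Ric$ of strictly lower order --- are absorbed using Step 1, the estimate at order $m-1$, and Lemma \ref{lemma: cut-off function on each ball}. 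Because any $\hat{B}_j$ overlapping $\hat{B}_i$ satisfies $K_j \leq K_i + \Gamma \leq (1+\Gamma) K_i$ (using $K_i \geq 1$), the neighbouring initial smallness factor $P_j$ is inflated only by a multiplicative constant $\leq (1+\Gamma) e^{A\Gamma}$, so the exponential smallness of $P_i$ is preserved. The output is
\[|\nabla^m Ric|_{g(t)}(x) \leq C_m\, K_i^{m/2}\, P_i\, e^{C_m K_i t}, \quad x \in B_i,\ t\in [0,T^*],\]
with $C_m$ depending only on $n$ and $\Gamma$.

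Step 3 reruns the pointwise Gronwall argument from the proof of Theorem \ref{theorem: improved doubling time estimate}: the inequality $\partial_t |Rm| \leq 6|Rm||Ric| + 4|\nabla^2 Ric|$ from (\ref{equation: time derivative of Rm in terms of the second derivative of Ric}), combined with the Step 2 bounds and the identity $P_i = (n-1) K_i e^{A(\bar{K} - K_i)}$, produces via Lemma \ref{lemma: minimum of a continuous function} a $T_0 = T_0(n, I, \Gamma, A, \bar{K}) > 0$ such that $|Rm|_{g(t)}(x) < 2(1+\Gamma) K_i$ for all $x \in B_i$ and $t \leq T_0$. Since the $B_j$ cover $M$ and every $x \in \hat{B}_i$ lies in some $B_j$ with $\hat{B}_j \cap \hat{B}_i \ne \emptyset$, this upgrades to $\sup_{\hat{B}_i}|Rm|_{g(t)} < 2(1+\Gamma) K_i$, closing the continuity argument; short-time existence on $[0,T_0]$ then follows from Hamilton's standard compact short-time theorem together with the a priori bound just established. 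The main obstacle is Step 2: the norms $|\nabla^m Ric|$ are small only on each $\hat{B}_i$ individually, so a global maximum principle as in Theorem \ref{theorem: improved doubling time estimate} would destroy the desired exponential factor; localizing via $\phi_i$ introduces error terms containing $\nabla \phi_i$, $\nabla^2 \phi_i$, and (through the evolution of $\nabla^2 Ric$ at $m=2$) $\nabla^3 \phi_i$, and the cut-off estimates from Lemma \ref{lemma: cut-off function on each ball} are precisely calibrated to match them, provided the induction $m=0 \to 1 \to 2$ is respected so that the factor $e^{-A K_i}$ in $P_i$ is not sacrificed at any stage.
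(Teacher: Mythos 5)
Your plan diverges from the paper's at the crucial Step 2, and that divergence introduces a genuine gap. The paper does not derive the pointwise bounds on $|\nabla^m Ric|$ ball-by-ball: it proves Proposition~\ref{theorem: lifespan estimate - compact case} as a corollary of Proposition~\ref{theorem: lifespan estimate - local ricci flow} (taking $\chi\equiv 1$), and in that proof the key comparison is \emph{global}. One builds a single barrier
$\Phi_m(x,t)=\sum_{i}\phi_i(x)\,K_i^m P_i^2\,e^{\Lambda K_i t}$
over the entire cover and applies the maximum principle to the globally defined function $|\nabla^m Ric|^2-\Phi_m$. Here the cut-offs appear only inside the barrier, and the bounds $|\Delta\Phi_m|\le C\,I\,K_i\Phi_m$ on each $B_i$ rely directly on (d) to compare adjacent $K_j, P_j$ and on (e) to bound the number of overlapping summands.

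Your localized version --- applying the maximum principle to $\phi_i^{2q}|\nabla^m Ric|^2$ on $\hat B_i$ --- does not close. Write $v=\phi_i^{2q}w$ with $w=|\nabla^m Ric|^2$. At an interior spatial maximum of $v$, using $\nabla v=0$ and $\Delta v\le 0$ you indeed eliminate the gradient of $w$, but the surviving inequality reads
$\phi_i^{2q}\Delta w \le C\bigl(|\nabla\phi_i|^2+\phi_i|\Delta\phi_i|\bigr)\,\phi_i^{2q-2}\,w,$
and with $|\nabla\phi_i|\le CK_i^{1/2}$, $|\Delta\phi_i|\le CK_i$ this becomes $\phi_i^{2q}\Delta w\le C K_i\,\phi_i^{2q-2}\,w = CK_i\,\phi_i^{-2}v$. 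The factor $\phi_i^{-2}$ is unbounded near $\partial\hat B_i$, so the differential inequality $\partial_t v\le CK_i\phi_i^{-2}v$ does not yield a $T_0$-independent-of-$i$ bound. Increasing $q$ or improving the cut-off to $|\nabla\phi_i|^2\le CK_i\phi_i$ still leaves a negative power of $\phi_i$. This is not a technical annoyance but a structural one: the evolution of $Ric$ is parabolic with infinite speed of propagation, so a purely local maximum principle cannot close without coupling across balls. The Bernstein device used in Shi-type localizations relies on an auxiliary lower-order quantity whose Laplacian supplies the absorbing term; no such quantity is available when the \emph{point} of the estimate is that $|Ric|$ is far smaller than $|Rm|$. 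It is also telling that assumption (e) (finite intersection number $I$) never enters your argument, even though $T_0$ must depend on $I$ --- $I$ is exactly what controls the gluing of the local cut-offs into the global barrier, and its absence signals the missing cross-ball coupling.

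Your Steps 1 and 3 are sound and in fact parallel the paper's: Step 1 matches Lemma~\ref{lemma: estimate of Rm and its covariant derivatives under local ricci flow} (via Lemmas~\ref{lemma: Shi's estimate for the local ricci flow}, \ref{lemma: Modified Shi's estimate for the local Ricci flow}), and Step 3 matches the pointwise Gronwall/integral argument of Proposition~\ref{theorem: lifespan estimate - local ricci flow} using (\ref{equation: time derivative of Rm in terms of the second derivative of Ric}) and Lemma~\ref{lemma: minimum of a continuous function}. The continuity argument is the right framework. To repair the proof, replace Step 2 with the paper's global barrier comparison (and, for $m=2$, the $L^2$/Nash--Moser iteration of Lemmas~\ref{L2 control of the derivatives of Ricci curvature}, \ref{mean value inequality for derivatives of Ric}, \ref{lemma: mean value inequality for derivative of ricci for short time}, which avoids the need for a $|\nabla^3\phi_i|$ bound in the maximum-principle computation).
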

The proof of Proposition \ref{theorem: lifespan estimate - compact case} is omitted since it can be viewed as a corollary of Proposition \ref{theorem: lifespan estimate - local ricci flow}, which states the same result for the local Ricci flow. Indeed, we only need to take the cut-off function $\chi\equiv 1$ in Proposition \ref{theorem: lifespan estimate - local ricci flow}. We would like to point out that in order to control the Ricci curvature, we need to construct barrier functions by gluing up local information on each $\Bhat_i$, thus we need assumptions (d) and (e) to control the growth of the geometry.

From the proof of Proposition \ref{theorem: lifespan estimate - local ricci flow}, we see that the constant $T_0$ in Proposition \ref{theorem: lifespan estimate - compact case} can be written as $C(n, I, \Gamma) e^{-A\bar{K}-7A\Gamma}A$. However, larger constant $A$ in Assumption \ref{initial good cover assumption} implies smaller Ricci curvature, hence we should expect longer lifespan of the solutions. Therefore the exponential term $e^{-A\bar{K}-7A\Gamma}$ is undesirable. The next proposition shows that it can be avoided under slightly stronger assumptions:
 \begin{assumption}\label{initial good cover assumption -  slow curvature growth}
Instead of (c) and (d) in Assumption \ref{initial good cover assumption}, we assume

(c$'$) For each $\Bhat_i$, $\sup_{\Bhat_i} |\nabla ^m Ric| \leq K_i^{m/2}P_i$, $m=0,1,2$, where $P_i=(n-1) K_i \exp(-A K_i)$, and $A \geq 0$.

(d$'$) If $\Bhat_j \cap \Bhat_i \neq \emptyset$, then $K_j \leq K_i + \Gamma \min \{ \frac{1}{A}, 1 \}$.
\end{assumption}

\begin{rem}
If $A=0$, we can always make (c$'$) ture by choosing $K_i$ large enough.
\end{rem}

\begin{prop}\label{theorem: lifespan estimate - compact case - slow curvature growth}
Let $(M,g)$ be an $n$-dimensional compact Riemannian manifold satisfying Assumption \ref{initial good cover assumption -  slow curvature growth}. Then there exists a constant $c(n,\Gamma)$ depending on $n$ and $\Gamma$, such that the lifespan of the Ricci flow solution on $M$ with initial metric $g$ is greater than
\[
T_0 = \frac{c(n,\Gamma)}{I} \max \left\{ A, \frac{1}{\max_{1\leq i \leq N} \{K_i\}} \right\}.
\]
Moreover,
\[\sup_{\Bhat_i} |Rm|_{g(t)} \leq 2(1+\Gamma)K_i \hspace{4pt} for \hspace{4pt} any \hspace{4pt} t\in [0,T_0] \hspace{4pt} and \hspace{4pt}i=1,2,..., N.\]
\end{prop}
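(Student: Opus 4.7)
The plan is to mirror the three-step scheme of Theorem \ref{theorem: improved doubling time estimate}, carried out simultaneously over the cover $\{\Bhat_{i}\}$ by means of the cut-off functions from Lemma \ref{lemma: cut-off function on each ball}, and wrapped in a standard bootstrap. Let $T^{*}$ be the largest time on which the target bound $\sup_{\Bhat_{i}}|Rm|_{g(t)}\leq 2(1+\Gamma)K_{i}$ holds on $[0,T^{*}]$ for every $i$. I will show that, under this bootstrap hypothesis and for $t\leq T_{0}$, the bound can be strictly improved to $\tfrac{3}{2}(1+\Gamma)K_{i}$ on each $\Bhat_{i}$; by openness this forces $T^{*}\geq T_{0}$.

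On each $\Bhat_{i}$, rescaling by $K_{i}$ turns the bootstrap ceiling into $|Rm|\leq 2(1+\Gamma)$ and the $m=2$ initial datum in (c$'$) into $|\nabla^{2}Ric|\leq (n-1)e^{-AK_{i}}\leq C$. Lemma \ref{lemma: control nabla Rm by Rm and nabla nabla Ric} followed by Lemma \ref{lemma: modified shi's estimates} (applied on $(0,t]$ for $t\leq 1/K_{i}$ and on a sliding window $(t-1/K_{i},t]$ for $t\geq 1/K_{i}$) yield $|\nabla Rm|\leq C(n,\Gamma)K_{i}^{3/2}$ and $|\nabla^{2}Rm|\leq C(n,\Gamma)K_{i}^{2}$ on a slightly shrunk subball, for every $t\in [0,T^{*}]$. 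Feeding these bounds into the evolution equations \eqref{equation: evolution equation of Ric along the Ricci flow}--\eqref{equation: evolution equation of the second covariant derivative of the Ricci curvature} and running a maximum-principle argument on the quantities $\max_{i}\phi_{i}e^{AK_{i}}|\nabla^{m}Ric|$ (with $\phi_{i}$ from Lemma \ref{lemma: cut-off function on each ball}) simultaneously over the full cover, I aim at the barrier
\[
\sup_{\Bhat_{i}}|\nabla^{m}Ric|_{g(t)}\leq C(n,\Gamma)\,K_{i}^{1+m/2}\,e^{C(n,\Gamma)K_{i}t-AK_{i}},\qquad m=0,1,2,
\]
with the $t^{-1/2}$ contribution in the $m=2$ evolution handled exactly as in Step~2 of Theorem \ref{theorem: improved doubling time estimate}. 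The algebraic input that lets this global argument close under (c$'$)--(d$'$) is that (d$'$) forces $e^{-AK_{j}}/e^{-AK_{i}}\leq e^{\Gamma}$ on overlapping pairs, so the per-ball Ricci size $P_{i}$ transfers between neighbours with only a universal multiplicative loss; together with the bounded overlap $I$ from (e), this avoids the exponentially small factor $e^{-A\bar K-7A\Gamma}$ that limits Proposition \ref{theorem: lifespan estimate - compact case}.

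Finally, the pointwise identity $\partial_{t}|Rm|\leq 6|Rm||Ric|+4|\nabla^{2}Ric|$ obtained from \eqref{equation: time derivative of Rm in terms of the second derivative of Ric} and Gronwall give
\[
|Rm|(x,t)\leq \Bigl(|Rm|(x,0)+4\int_{0}^{t}|\nabla^{2}Ric|\,ds\Bigr)\exp\Bigl(6\int_{0}^{t}|Ric|\,ds\Bigr),
\]
and the previous barrier makes both $\int_{0}^{t}|Ric|$ and $K_{i}^{-1}\int_{0}^{t}|\nabla^{2}Ric|$ bounded by $C(n,\Gamma)K_{i}^{-1}e^{-AK_{i}}(e^{C(n,\Gamma)K_{i}t}-1)$. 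Lemma \ref{lemma: minimum of a continuous function} shows that this expression is smaller than any prescribed $\eta>0$ provided $t\leq c(n,\Gamma,\eta)\max\{A,1/K_{i}\}$, so choosing $\eta$ with $(1+4\eta)e^{6\eta}<\tfrac{3}{2}$ and minimising over $i$ closes the bootstrap with $T_{0}=c(n,\Gamma)I^{-1}\max\{A,1/\max_{i}K_{i}\}$, the factor $I^{-1}$ arising from the simultaneous-over-the-cover execution of the preceding step. I expect the hard part to be exactly that preceding step: a purely ball-by-ball barrier would leave the cut-off commutator terms $\Delta\phi_{i}$ and $|\nabla\phi_{i}|^{2}$ uncontrolled near $\partial\operatorname{supp}\phi_{i}$, and they must be absorbed into the data on overlapping balls via (d$'$); without (d$'$) this absorption would incur the exponential factor that Proposition \ref{theorem: lifespan estimate - compact case} pays.
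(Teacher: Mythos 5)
Your plan is correct and is essentially the paper's own approach: the published proof of this proposition simply points back to the proof of Proposition~\ref{theorem: lifespan estimate - local ricci flow} (run with $\chi\equiv 1$) and observes, exactly as you do, that under (c$'$)--(d$'$) the factors $e^{A\bar K}$ and $e^{A\Gamma}$ which entered there --- from writing $P_i=(n-1)K_i e^{A\bar K-AK_i}$ and from comparing $P_i$ with $P_j$ over overlapping $\Bhat_i$, $\Bhat_j$ --- simply disappear, so that $\Lambda\sim C(n,\Gamma)I$ and $T_0\sim A/\Lambda$. Your identification of the crux --- that (d$'$) forces $e^{-AK_j}/e^{-AK_i}\leq e^{\Gamma}$ on overlapping pairs, so the per-ball data $P_i$ transfer across neighbours with an $A$-independent loss, and the $I$-fold overlap in the glued barrier accounts for the $I^{-1}$ in $T_0$ --- is exactly right; the only slip is the spurious $K_i^{-1}$ prefactor in your integrated bound, which should read $\int_0^t|Ric|\,ds\leq C e^{-AK_i}(e^{CK_it}-1)$ and similarly for $K_i^{-1}\int_0^t|\nabla^2Ric|\,ds$, but this is harmless since Lemma~\ref{lemma: minimum of a continuous function} applied to that corrected expression still gives $T_0\geq \tfrac{c(n,\Gamma)}{I}\max\{A,1/\max_iK_i\}$.
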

\begin{proof}
In the proof of Proposition \ref{theorem: lifespan estimate - local ricci flow}, the term $e^{A\Gamma}$ is only introduced when we compare $P_i$ and $P_j$; and the term $e^{A\bar{K}}$ is only introduced when directly using $P_i=(n-1)K_ie^{A\bar{K}-AK_i}$. These can be clearly avoided under the new assumptions. (See also Remark \ref{remark: handling A by e^{-A}}.)
\end{proof}

\section{Existence of Ricci flows on non-compact manifolds}

\subsection{Derivative estimates for $Rm$ and $Ric$ under the locall Ricci flow}\label{section: derivatives estimates for the local ricci flow}
Let $(M,g(t))$ be a solution of the local Ricci flow
\begin{equation}\label{equation: local Ricci flow}
\frac{\partial}{\partial t} g = -2 \chi^2 Ric,
\end{equation}
where $\chi$ is a non-negative smooth function supported on a compact domain $\Omega$. In this section we prove some useful regularity estimates for the local Ricci flow. For convenience, we first fix the metrics on a large scale so the curvatures are bounded, then we can easily clarify the dependence on the curvature by scaling arguments.
In this and the following subsection, we will make use of the following evolution equations and inequalities under the local Ricci flow, which have been computed in \cite{xu2013short}. For any integer $m \geq 0$,
\begin{equation} \label{evolution equation of the m^th covariant derivative of Rm}
\begin{split}
\frac{\partial}{\partial t} \nabla^m Rm  =& \chi^2 \Delta \nabla^m Rm + \sum_{i+j+k=m+2, \hspace{4pt} k\leq m+1} \nabla^i \chi \ast \nabla^j \chi \ast \nabla^k Rm  \\
 &  + \sum_{i+j+k=m} \nabla^i(\chi^2) \ast \nabla^j Rm \ast \nabla^k Rm.
 \end{split}
\end{equation}
 And for any integer $m\geq 1$,
 \begin{equation}\label{equation: inequality satisfied by the derivatives of chi}
 \frac{\partial}{\partial t} |\nabla^m \chi| \leq C(n,m) \sum_{k\leq m-1} |\nabla^k (\chi^2 Ric)| |\nabla^{m-k} \chi|.
 \end{equation}

In the following lemmas we assume bounds on the covariant derivatives of $\chi$ in a space-time region. Later, when we apply them to prove existence of the Ricci flow, we will construct cut-off functions satisfying these bounds.

\begin{lem} \label{Shi's estimate - mean value inequality form}
Let $m \geq 1$, and let $0< R, T\leq \alpha$. Suppose there is a constant $C_0$ such that
\[\chi |Rm|(x,t) \leq C_0, \quad  \chi^i |\nabla^i Rm|(x,t) \leq C_0, \quad 1 \leq i \leq m-1;\]
\[  |\nabla^j \chi|_{g(t)}(x,t) \leq C_0, \quad 1\leq j \leq m+1;\]
for any  $(x,t)\in B_{g(0)}(q,R) \times [0,T] $. And assume that the following Sobolev inequality
\[ \left( \int f^\frac{2n}{n-2} d\mu_{g(0)}\right)^\frac{n}{n-2} \leq C_S \int (|\nabla f|_{g(0)}^2 + f^2 )d\mu_{g(0)}\]
holds for any $W^{1,2}$ function $f\geq 0$ compactly supported on $B_{g(0)}(q,R)$.
Then
\[\sup_{B_{g(0)} (q,\frac{R}{2}) \times [\frac{T}{2},T]} \chi^m |\nabla^m Rm| \leq C C_S^\frac{n}{4} (\frac{1}{R^2}+ \frac{1}{T})^\frac{n+2}{4} \| \max\{ \chi^m |\nabla^m Rm|, 1 \} \|_{L^2(\Omega_T)}
\]
for some constant $C$ depending on $\alpha, C_0, n$ and $m$, where $\Omega_T$ denotes the cylinder $B_{g(0)} (q,R) \times [0,T]$.
\end{lem}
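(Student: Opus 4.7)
The approach is parabolic Nash--Moser iteration on the scale-invariant auxiliary function $F := \chi^{2m}|\nabla^m Rm|^2 + 1$, with the hypothesized Sobolev inequality as the functional backbone. Since $\sqrt{F}$ is comparable to $\max\{\chi^m|\nabla^m Rm|,1\}$, a mean value estimate of the form
\[
\sup_{B_{g(0)}(q,R/2)\times[T/2,T]} F \;\leq\; C\,C_S^{n/2}\Bigl(\tfrac{1}{R^2}+\tfrac{1}{T}\Bigr)^{(n+2)/2}\|F\|_{L^1(\Omega_T)}
\]
translates, after a square root, into the bound stated in the lemma.

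The first step is to derive a weak subsolution inequality
\[
(\partial_t - \chi^2\Delta) F \;\leq\; -\chi^{2m+2}|\nabla^{m+1}Rm|^2 + C_1 F,
\]
with $C_1 = C_1(n,m,\alpha,C_0)$. Starting from (\ref{evolution equation of the m^th covariant derivative of Rm}) and the pointwise identity $\partial_t|\nabla^m Rm|^2 = \chi^2\Delta|\nabla^m Rm|^2 - 2\chi^2|\nabla^{m+1}Rm|^2 + (\ldots)$, I would multiply through by $\chi^{2m}$ and expand $\chi^2\Delta F$. Three classes of error terms arise: commutators of the form $\chi^{2m+1}\nabla\chi\ast\nabla^m Rm\ast\nabla^{m+1}Rm$ from $\chi^2\Delta$ acting on the factor $\chi^{2m}$; semilinear curvature terms $\chi^{2m}\nabla^i(\chi^2)\ast\nabla^j Rm\ast\nabla^k Rm\ast\nabla^m Rm$ with $i+j+k=m$; and $\chi$-derivative terms $\chi^{2m}\nabla^i\chi\ast\nabla^j\chi\ast\nabla^k Rm\ast\nabla^m Rm$ with $k\leq m+1$ coming from the two sums in (\ref{evolution equation of the m^th covariant derivative of Rm}). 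Redistributing the $\chi^{2m}$ prefactor as a product $\chi^i|\nabla^i Rm|\cdot\chi^m|\nabla^m Rm|\cdot\chi^{m+1}|\nabla^{m+1}Rm|$, the hypotheses $\chi^i|\nabla^i Rm|\leq C_0$ for $0\leq i\leq m-1$ and $|\nabla^j\chi|\leq C_0$ for $1\leq j\leq m+1$, together with a Cauchy--Schwarz absorption against $\tfrac12\chi^{2m+2}|\nabla^{m+1}Rm|^2$, bound each of these classes by a constant multiple of $F$.

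The second step is the Nash--Moser iteration itself. On a shrinking family of cylinders $Q_k = B_{g(0)}(q,R_k)\times[T_k,T]$ with $R_k\searrow R/2$ and $T_k\searrow T/2$, multiply the above inequality by $\eta^2 F^{p-1}$ for a parabolic cutoff $\eta$ supported in $Q_k$ and integrate. Integration by parts against $\chi^2\Delta$, combined with the good term $-\chi^{2m+2}|\nabla^{m+1}Rm|^2$ and the pointwise identity
\[
\chi^2|\nabla F|^2 \;\leq\; C F^2 + CF\cdot\chi^{2m+2}|\nabla^{m+1}Rm|^2,
\]
yields a reverse-H\"older step $\int|\nabla(\eta F^{p/2})|^2 \leq Cp\bigl(\int F^p + \int F^{p-1}\bigr)$ modulo errors involving $\nabla\eta$ and $\nabla\chi^2$ that are controlled in the same way. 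Feeding this into the hypothesized Sobolev inequality promotes $L^p$ control to $L^{pn/(n-2)}$; iterating with $p_k = 2\bigl(n/(n-2)\bigr)^k$ and summing the geometric series in $\log p_k$ and $(R_k-R_{k+1})^{-2}+(T_{k+1}-T_k)^{-1}$ produces the desired mean value inequality.

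The main technical obstacle is the degeneracy of $\chi^2\Delta$ on $\{\chi=0\}$: the $\chi$-power bookkeeping in the error terms must never generate a negative power of $\chi$. The scale-invariant hypotheses on $\chi^i|\nabla^i Rm|$ and $|\nabla^j\chi|$ are exactly calibrated for this, and once the absorption scheme is correctly set up the remainder of the iteration follows the classical Moser mean value inequality for heat subsolutions.
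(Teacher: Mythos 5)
Your overall plan — a parabolic Moser iteration for a scale-invariant auxiliary quantity built from $\chi^m|\nabla^m Rm|$, feeding the hypothesized Sobolev inequality — is in the right spirit and matches what the paper does in broad outline. But there is a genuine gap in your Step 1 that the rest of the proposal inherits.

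You claim to establish a pointwise weak subsolution inequality
\[
(\partial_t - \chi^2\Delta)F \;\leq\; -\chi^{2m+2}|\nabla^{m+1}Rm|^2 + C_1\,F,
\]
bounding all error terms by $C_1 F$ via pointwise Cauchy--Schwarz absorption. This cannot work. The first sum in (\ref{evolution equation of the m^th covariant derivative of Rm}) runs over $i+j+k=m+2$ with $k\leq m+1$, and taking $k=0$, $j=0$, $i=m+2$ produces the term $\chi\,\nabla^{m+2}\chi\ast Rm$. After contracting with $\chi^{2m}\nabla^m Rm$ this contributes $\chi^{2m+1}\nabla^{m+2}\chi\ast Rm\ast\nabla^m Rm$ to $\partial_t F$, and $|\nabla^{m+2}\chi|$ is \emph{not} among the hypotheses (which only bound $|\nabla^j\chi|$ for $1\leq j\leq m+1$). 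There is nothing in the good term $-\chi^{2m+2}|\nabla^{m+1}Rm|^2$ to absorb a factor of $\nabla^{m+2}\chi$, because no $(m+1)$-st derivative of $Rm$ appears in it. So the pointwise inequality you posit is false in general, and your subsequent Moser iteration rests on an unavailable starting point.

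The paper's proof evades this precisely by refusing to derive a pointwise differential inequality first: it multiplies the evolution equation by the full test function $\phi^2\chi^{2pm}|\nabla^m Rm|^{2p-2}\nabla^m Rm$, integrates, isolates the $\nabla^{m+2}\chi$ contribution as a separate term ($I_2$), and integrates by parts once more \emph{at the integral level} to transfer a derivative off $\nabla^{m+2}\chi$. This produces terms with $\nabla^{m+1}\chi$ (controlled by hypothesis) and $\nabla^{m+1}Rm$ (absorbed by the coercive term), at the cost of introducing $\nabla\phi$, $\nabla\chi$, and a factor $p$ from differentiating $|\nabla^m Rm|^{2p-2}$. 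None of this is visible if one insists on a pointwise subsolution inequality. To repair your proposal you would need to abandon the two-step structure (pointwise inequality, then Moser) and fold this extra integration by parts into the energy estimate for each exponent $p$, which is exactly what the paper does.
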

\begin{proof}
For any $0 < r \leq \frac{R}{2}$ and $0 < s, \tau \leq \frac{T}{2}$, choose $C^1$ functions $0 \leq \psi, \eta \leq 1$, such that
\begin{equation}\label{spatial cut-off function psi}
\psi(x) =\begin{cases}
1, \quad x\in B_{g(0)}(q,r); \\
0, \quad x\notin B_{g(0)}(q,r+\delta);
\end{cases}
\end{equation}
\[
\eta(t) = \begin{cases}
0, \quad 0 \leq t < s; \\
1, \quad t \geq s + \tau;
\end{cases}
\]
and
\[ |\nabla \psi|^2_{g(0)} \leq \frac{4}{\delta^2} \psi, \quad |\eta'| \leq \frac{2}{\tau}\]
on their supports.
Let $\phi(x,t)=\eta(t) \psi(x)$, then we have $0 \leq \phi \leq 1$ and
\[ |\nabla \phi|_{g(0)}^2 \leq \frac{4}{\delta^2} \phi, \quad |\frac{\partial \phi}{\partial t}| \leq \frac{2}{\tau}\]
on the support of $\phi$.

Since $\chi |Rm|(x,t) \leq C_0$ for all $(x,t)\in B_{g(0)} (q,R) \times [0,T]$, and $T \leq \alpha$, the Riemannian metric remains uniformly equivalent on $B_{g(0)} (q,R)$ for all $0\leq t \leq T$:
\[e^{-2n\alpha C_0} g(0) \leq g(t) \leq e^{2n\alpha C_0} g(0).\]
Hence
\[|\nabla \phi|_{g(t)}(x,t)^2 \leq \frac{C}{\delta^2} \phi(x,t), \]
for any $(x,t)\in B_{g(0)} (q,R) \times [0,T]$. Here and in the following we use $C$ to denote a constant whose value may change from line to line but will only depend on $\alpha, C_0, n$ and $m$.

For any $p \geq 1$, multiply both sides of equation (\ref{evolution equation of the m^th covariant derivative of Rm}) by $ \phi^2 \chi^{2pm} |\nabla^m Rm|^{2p-2} \nabla^m Rm$, and integrate on $B_{g(0)}(q,R)$ to get
\begin{eqnarray*}
&& \int \phi^2 \chi^{2pm} |\nabla^m Rm|^{2p-2} \langle \nabla^m Rm, \frac{\partial}{\partial t} \nabla^m Rm \rangle \\
&=& \int \phi^2 \chi^{2pm+2} |\nabla^m Rm|^{2p-2} \langle  \Delta \nabla^m Rm, \nabla^m Rm \rangle \\
&  & + \int \phi^2 \chi^{2pm+1}|\nabla^m Rm|^{2p-2}  \nabla^m Rm \ast \nabla^{m+2} \chi \ast Rm \\
&   & + \sum_{i+j+k=m+2; \hspace{4pt} i,j,k \leq m+1} \int \phi^2 \chi^{2pm}|\nabla^m Rm|^{2p-2}  \nabla^m Rm \ast \nabla^{i} \chi \ast \nabla^j \chi \ast \nabla^k Rm \\
&& + \sum_{i+j+k=m} \int \phi^2 \chi^{2pm} |\nabla^m Rm|^{2p-2} \nabla^m Rm \ast \nabla^i(\chi^2) \ast \nabla^j Rm \ast \nabla^k  Rm \\
&=& I_1 + I_2 + I_3 + I_4.
\end{eqnarray*}
Apply integration by parts on $I_1$ will yield the good terms containing the $(m+1)$-th  derivative of $Rm$.
\begin{eqnarray*}
I_1 & = & \int \phi^2 \chi^{2mp+2} |\nabla^m Rm|^{2p-2} \langle  \Delta \nabla^m Rm, \nabla^m Rm \rangle \\
     & \leq & -\int \phi^2 \chi^{2mp+2} |\nabla ^m Rm|^{2p-2} |\nabla^{m+1} Rm|^2 \\
     && -\frac{p-1}{2}\int \phi^2 \chi^{2mp+2}|\nabla^m Rm|^{2p-4}|\nabla |\nabla^m Rm|^2|^2\\
     && + 2\int \phi \chi^{2mp+2} |\nabla \phi| |\nabla^m Rm|^{2p-1} |\nabla^{m+1} Rm|. \\
     && + (2pm+2)\int \phi^2 \chi^{2mp+1} |\nabla \chi| |\nabla^m Rm|^{2p-1} |\nabla^{m+1} Rm| \\
     & \leq & -\frac{3}{4}\int \phi^2 \chi^{2mp} |\nabla ^m Rm|^{2p-2} |\nabla^{m+1} Rm|^2 \\
     && -\frac{p-1}{2}\int \phi^2 \chi^{2mp}|\nabla^m Rm|^{2p-4}|\nabla |\nabla^m Rm|^2|^2\\
     && + 8\int (\chi^{2mp+4} |\nabla \phi|^2 + (mp+1)^2 \phi^2 \chi^{2mp+2} |\nabla \chi|^2) |\nabla^m Rm|^{2p}.  \\
\end{eqnarray*}
Similarly we can integrate by parts to get rid of the $(m+2)$-th derivative of $\chi$ in $I_2$ and $I_3$. The $(m+1)$-th derivative of $Rm$ produced in this process can be absorbed by the good term in $I_1$.
\begin{eqnarray*}
I_2 & = & \int \phi^2 \chi^{2mp+1}|\nabla^m Rm|^{2p-2}  \nabla^m Rm \ast \nabla^{m+2} \chi \ast Rm  \\
      & \leq & C(n)\int \phi^2 \chi^{2mp+1}|\nabla^m Rm|^{2p-1}  |\nabla^{m+1} \chi|  |\nabla Rm| \\
      & & + C(n)\int \phi^2 \chi^{2mp+1}|\nabla^m Rm|^{2p-2}  |\nabla^{m+1} Rm|  |\nabla^{m+1} \chi|  | Rm| \\
      && + C(n)(p-1) \int \phi^2 \chi^{2mp+1}|\nabla^m Rm|^{2p-3}  |\nabla |\nabla^m Rm|^2|  |\nabla^{m+1} \chi|  |Rm| \\
      &&  + C(n)\int \phi \chi^{2mp+1} |\nabla \phi| |\nabla^m Rm|^{2p-1}  |\nabla^{m+1} \chi|  | Rm| \\
      && + C(n) (2mp+1) \int \phi^2 \chi^{2mp} |\nabla \chi| |\nabla^m Rm|^{2p-1}  |\nabla^{m+1} \chi|  | Rm| \\
      & \leq & \frac{1}{4} \int \phi^2 \chi^{2mp} |\nabla^m Rm|^{2p-2} |\nabla^{m+1} Rm|^2 \\
      && + \frac{1}{4}(p-1) \int \phi^2 \chi^{2mp} |\nabla^m Rm|^{2p-4} |\nabla |\nabla^m Rm|^2|^2 \\
     && + C(n)\int \phi^2 \chi^{2mp+1}|\nabla^m Rm|^{2p-1}  |\nabla^{m+1} \chi|  |\nabla Rm| \\
     && + C(n) \int (\phi |\nabla \phi|\chi + (2mp+1)\phi^2 |\nabla \chi| ) \chi^{2mp} |\nabla^m Rm|^{2p-1}  |\nabla^{m+1} \chi|  | Rm| \\
      && + C(n)p \int \phi^2 \chi^{2mp+2}   |\nabla^{m+1} \chi|^2  |Rm|^2 |\nabla^m Rm|^{2p-2}  . \\
\end{eqnarray*}

\begin{eqnarray*}
I_3 & = & \sum_{i+j+k=m+2; \hspace{4pt} i,j,k \leq m+1} \int \phi^2 \chi^{2mp}|\nabla^m Rm|^{2p-2}  \nabla^m Rm \ast \nabla^{i} \chi \ast \nabla^j \chi \ast \nabla^k Rm \\
 & \leq &  C(n) \int \phi^2 \chi^{2mp+1}|\nabla \chi| |\nabla^m Rm|^{2p-1} |\nabla^{m+1} Rm| \\
 && + C(n) \sum_{i+j+k=m+2; \hspace{4pt} i,j,k \leq m} \int \phi^2 \chi^{2mp} |\nabla^i \chi| |\nabla^j \chi| |\nabla^m Rm|^{2p-1} |\nabla^k Rm| \\
 &\leq &  \frac{1}{4} \int \phi^2 \chi^{2mp} |\nabla^m Rm|^{2p-2} |\nabla^{m+1} Rm|^2 \\
 && + C(n) \int \phi^2 \chi^{2mp}(\chi^2 |\nabla \chi|^2 + \chi |\nabla^2 \chi| +  |\nabla \chi|^2)|\nabla^m Rm|^{2p} \\
 && + C(n) \sum_{i+j+k=m+2; \hspace{4pt} i,j,k \leq m-1} \int \phi^2 \chi^{2mp} |\nabla^i \chi| |\nabla^j \chi| |\nabla^m Rm|^{2p-1} |\nabla^k Rm|
\end{eqnarray*}
$I_4$ can be simply estimated by the Cauchy inequality.
\begin{eqnarray*}
I_4 & = & \sum_{i+j+k=m} \int \phi^2 \chi^{2mp} |\nabla^m Rm|^{2p-2} \nabla^m Rm \ast \nabla^i(\chi^2) \ast \nabla^j Rm \ast \nabla^k  Rm \\
& \leq & C(n) \int \phi^2 \chi^{2mp+2} |Rm| |\nabla^m Rm|^{2p} \\
&& + C(n) \sum_{i+j+k=m; \hspace{4pt} j, k\leq m-1} \int \phi^2 \chi^{2mp} |\nabla^i (\chi^2)| |\nabla^j Rm| |\nabla^k Rm| |\nabla^m Rm|^{2p-1}.
\end{eqnarray*}
By the assumptions of the lemma, we can replace the lower order ($\leq m-1$) derivatives of $Rm$, and lower order ($\leq m+1$) derivatives of $\chi$ by their corresponding bounds.
\begin{eqnarray*}
I_1  & \leq & -\frac{3}{4}\int \phi^2 \chi^{2mp} |\nabla ^m Rm|^{2p-2} |\nabla^{m+1} Rm|^2 \\
     && -\frac{p-1}{2}\int \phi^2 \chi^{2mp}|\nabla^m Rm|^{2p-4}|\nabla |\nabla^m Rm|^2|^2\\
     && + C ( p^2 + \frac{1}{\delta^2} ) \int \phi \chi^{2mp} |\nabla^m Rm|^{2p}.  \\
\end{eqnarray*}
\begin{eqnarray*}
 I_2 & \leq & \frac{1}{4} \int \phi^2 \chi^{2mp} |\nabla^m Rm|^{2p-2} |\nabla^{m+1} Rm|^2 \\
      && + \frac{1}{4}(p-1) \int \phi^2 \chi^{2mp} |\nabla^m Rm|^{2p-4} |\nabla |\nabla^m Rm|^2|^2 \\
      && + C ( p + \frac{1}{\delta}) \int \phi\chi^{2mp-1}  |\nabla^m Rm|^{2p-1}  + Cp \int \phi^2 \chi^{2mp} |\nabla^m Rm|^{2p-2}  . \\
\end{eqnarray*}
\begin{eqnarray*}
I_3 &\leq &  \frac{1}{4} \int \phi^2 \chi^{2mp} |\nabla^m Rm|^{2p-2} |\nabla^{m+1} Rm|^2 \\
 && + C \int \phi^2 \chi^{2mp} |\nabla^m Rm|^{2p} + C \int \phi^2 \chi^{2mp-m}  |\nabla^m Rm|^{2p-1}
\end{eqnarray*}
\begin{eqnarray*}
I_4 & \leq & C \int \phi^2 \chi^{2mp} |\nabla^m Rm|^{2p}  + C \int \phi^2 \chi^{2mp-m}  |\nabla^m Rm|^{2p-1}.
\end{eqnarray*}
Using the above estimates we can control the time derivative of spatial integrals of $\chi |\nabla^m Rm|$.
\begin{eqnarray*}
\frac{\partial}{\partial t} \int \phi^2 \chi^{2mp} |\nabla^m Rm|^{2p} & = & 2p\int \phi^2 \chi^{2mp} |\nabla^m Rm|^{2p-2} \langle \nabla^m Rm, \frac{\partial}{\partial t} \nabla^m Rm \rangle \\
&& + p\int \phi^2 \chi^{2mp} |\nabla^m Rm|^{2p-2} Ric \ast \nabla^m Rm \ast \nabla^m Rm \\
&& + 2 \int \phi \frac{\partial \phi}{\partial t} \chi^{2mp} |\nabla^m Rm|^{2p} - \int \phi^2 \chi^{2mp} |\nabla^m Rm|^{2p} S \\
\end{eqnarray*}
\begin{eqnarray*}
& \leq & -\frac{p}{2}\int \phi^2 \chi^{2mp} |\nabla ^m Rm|^{2p-2} |\nabla^{m+1} Rm|^2 \\
     && -\frac{p(p-1)}{2}\int \phi^2 \chi^{2mp}|\nabla^m Rm|^{2p-4}|\nabla |\nabla^m Rm|^2|^2\\
     && + C (p^3 + \frac{p}{\delta^2} + \frac{1}{\tau}) \int \phi \chi^{2mp} |\nabla^m Rm|^{2p} \\
     && + C ( p^2 + \frac{p}{\delta} ) \int \phi \chi^{2mp-m}  |\nabla^m Rm|^{2p-1}  \\
     && + Cp^2 \int \phi^2 \chi^{2mp-2m} |\nabla^m Rm|^{2p-2}  . \\
\end{eqnarray*}
Interpolate using the Cauchy inequality
\[\int \phi \chi^{2mp-m}  |\nabla^m Rm|^{2p-1} \leq \int \phi \chi^{2mp} |\nabla^m Rm|^{2p} +\int \phi \chi^{2mp-2m} |\nabla^m Rm|^{2p-2} . \]
Then integrate on $[0,t]$ for each $t\in (0,T]$ to get
\begin{eqnarray*}
&& \sup_{0 \leq t \leq T}\int \phi^2 \chi^{2mp} |\nabla^m Rm|^{2p} (t) + \frac{p}{2} \int_0^T \int \phi^2 \chi^{2mp} |\nabla ^m Rm|^{2p-2} |\nabla^{m+1} Rm|^2 \\
&& + \frac{p(p-1)}{2} \int_0^T \int \phi^2 \chi^{2mp}|\nabla^m Rm|^{2p-4}|\nabla |\nabla^m Rm|^2|^2  \\
& \leq & C (p^3 + \frac{p}{\delta^2} + \frac{1}{\tau}) \int_0^T \int \phi \chi^{2mp} |\nabla^m Rm|^{2p} \\
&&+ C( p^2 + \frac{p}{\delta} ) \int_0^T \int \phi \chi^{2mp-2m}  |\nabla^m Rm|^{2p-2}  \\
& \leq & C (p^3 + \frac{p}{\delta^2} + \frac{1}{\tau}) \int_0^T \int \phi u^{2p},
\end{eqnarray*}
where
\[ u=\max\{ \chi^m |\nabla^m Rm|, 1\}. \]

By the equivalence of metrics, the Sobolev inequality holds for any time $t \in [0,T]$ with a possibly larger Sobolev constant $e^{C(n)C_0\alpha}C_S$. Apply the Sobolev inequality to  $\phi u^p$, and note that $|\nabla u| \leq |\nabla (\chi^m |\nabla^m Rm| )|$ in the sense of distribution, we get from the above inequality that
\[
\left( \int (\phi u^p)^\frac{2n}{n-2} \right)^\frac{n-2}{n} \leq C_S Q,
\]
where
\begin{eqnarray*}
Q & = & \bar{C} (p^3 + \frac{p}{\delta^2} + \frac{1}{\tau}) \int_0^T \int \phi u^{2p}, \\
\end{eqnarray*}
$\bar{C}$ depends only on $n, m, \alpha$ and $C_0$.
Then the Holder inequality implies
\[
\begin{split}
\int_0^T \int (\phi u^p)^\frac{2(n+2)}{n}  \leq \int_0^T \left( \int (\phi u^p)^\frac{2n}{n-2} \right)^\frac{n-2}{n} \left( \int \phi^2 u^{2p} \right)^\frac{2}{n} \leq   C_S Q^\frac{n+2}{n}.
\end{split}
\]
Hence
\[
\left\| u \right\|_{L^\frac{2p(n+2)}{n} (r, s + \tau) } \leq C_S^{\frac{n}{2p(n+2)}}\bar{C}^\frac{1}{2p}p^\frac{3}{2p}\max\{ \frac{1}{\delta^\frac{1}{p}} , \frac{1}{\tau^\frac{1}{2p}}\} \left\| u \right\|_{L^{2p}(r+\delta, s) } ,
\]
where $\|\cdot\|_{L^q(r,s)}$ denotes the $L^q$-norm on $B_{g(0)}(q,r) \times [s, T])$.

Choose $p_i=(\frac{n+2}{n})^i$, $r_0=R$, $\delta_i = \frac{1}{2^{i+2}}R$, $r_{i+1}=r_i -\delta_i$, $s_0 =0$, $\tau_i=\frac{1}{2^{i+2}}T$, $s_{i+1}=s_i+\tau_{i}$, where $i=0,1,2,...$ Iteration of the above inequality yields the lemma.
\end{proof}

If we assume an initial bound of $|\nabla^m Rm|_{g(0)}$, we can get the following modified version of Lemma \ref{Shi's estimate - mean value inequality form}.
\begin{lem} \label{lemma: Modified Shi's estimate - mean value inequality form}
Suppose in addition to the assumptions of Lemma \ref{Shi's estimate - mean value inequality form}, we also have
\[ \sup_{B_{g(0)}(q,R)} \chi^m |\nabla^m Rm|_{g(0)} \leq C_1.\]
Then
\[\sup_{B_{g(0)} (q,\frac{R}{2}) \times [0,T]} \chi^m |\nabla^m Rm| \leq  \frac{CC_S^\frac{n}{4}}{R^\frac{n+2}{2}}\max \left\{ \| \max \{\chi^m |\nabla^m Rm|, 1\} \|_{L^2(\Omega_T)}, \hspace{2pt} \sqrt{V}C_1 \right \},
\]
where $V = Vol_{g(0)}(B_{g(0)}(q, R))$, $\Omega_T$ denotes the cylinder $ B_{g(0)} (q,R) \times [0,T]$, and $C$ is a constant depending only on $\alpha, C_0, C_S, n, m$,
\end{lem}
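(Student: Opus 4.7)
The plan is to rerun the Moser iteration of Lemma~\ref{Shi's estimate - mean value inequality form}, but to drop the temporal cut-off $\eta(t)$ altogether. In the previous proof the only reason to introduce $\eta$ was to discard the boundary contribution at $t=0$; under the new hypothesis $\sup_{B_{g(0)}(q,R)} \chi^m|\nabla^m Rm|_{g(0)} \leq C_1$ this boundary term is harmless, and in fact it is exactly what produces the $\sqrt{V}\,C_1$ term in the stated conclusion. Concretely I would take $\phi(x)=\psi(x)$ depending only on space, with $\psi \equiv 1$ on $B_{g(0)}(q,r)$, compactly supported in $B_{g(0)}(q,r+\delta)$, and $|\nabla\psi|^2 \leq 4\psi/\delta^2$.

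Multiplying (\ref{evolution equation of the m^th covariant derivative of Rm}) by $\psi^2\chi^{2mp}|\nabla^m Rm|^{2p-2}\nabla^m Rm$ and reproducing the estimates of $I_1,I_2,I_3,I_4$ verbatim (with the $\partial\phi/\partial t$ terms now absent) yields
\[
\frac{d}{dt}\!\int \psi^2\chi^{2mp}|\nabla^m Rm|^{2p} + (\text{good terms}) \leq C\!\left(p^3+\tfrac{p}{\delta^2}\right)\!\int \psi\, u^{2p},
\]
where $u=\max\{\chi^m|\nabla^m Rm|,1\}$. Integrating from $0$ to $t\leq T$ and taking $\sup_{t\in[0,T]}$ on the left now keeps the initial-data boundary integral, which is bounded by $C_1^{2p}V$. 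Applying Saloff-Coste's Sobolev inequality to $\psi u^p$ and then H\"older, as in the previous proof, yields a recursion of the form
\[
\|u\|_{L^{2p(n+2)/n}(B(r))} \leq C_S^{\frac{n}{2p(n+2)}}\bar{C}^{1/(2p)} p^{3/(2p)} \delta^{-1/p}\,\max\!\left\{\|u\|_{L^{2p}(B(r+\delta))},\; V^{1/(2p)}C_1\right\},
\]
the $L^q$ norms being taken over $B_{g(0)}(q,\cdot)\times[0,T]$.

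I would then iterate with $p_i = ((n+2)/n)^i$, $r_0 = R$, $\delta_i = R/2^{i+2}$, $r_{i+1}=r_i-\delta_i$, so that $r_i \geq R/2$ for every $i$. As in the proof of Lemma~\ref{Shi's estimate - mean value inequality form}, the exponent on $C_S$ telescopes to $n/4$, and the product $\prod \delta_i^{-1/p_i}$ collapses to a constant times $R^{-(n+2)/2}$ since $\sum 1/p_i = (n+2)/2$. The main subtlety, and the step I expect to be most delicate, is propagating the $V^{1/(2p_i)}C_1$ term through the $\max$ in the iteration: at step $i$ one uses $M_{i+1}\leq K_i M_i$ with $M_i := \max\{\|u\|_{L^{2p_i}}, V^{1/(2p_i)}C_1\}$, and since $V^{1/(2p_i)}\to 1$ while $\|u\|_{L^2(\Omega_T)}\geq \sqrt{VT}$ (because $u\geq 1$), one can check that $M_0$ controls $\sup_i V^{1/(2p_i)}C_1$ up to the stated constant. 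Passing to the limit $i\to\infty$ gives the desired sup bound on $B_{g(0)}(q,R/2)\times[0,T]$.
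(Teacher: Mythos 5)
Your overall strategy -- drop the temporal cut-off, keep the initial-data term, and run the same Moser iteration with the iterate $F(r,p) = \max\{\|u\|_{L^{2p}}, V^{1/(2p)}C_1\}$ -- is exactly the paper's. The recursion you write down is correct, and you rightly flag the propagation of the $V^{1/(2p_i)}C_1$ term as the crux. But your treatment of that step is a genuine gap.

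The problem is the claim that ``$M_0$ controls $\sup_i V^{1/(2p_i)}C_1$.'' Since $p_i\to\infty$, we have $V^{1/(2p_i)}\to 1$. When $V\geq 1$ this is fine, because $V^{1/(2p_i)}\leq \sqrt{V}$ and $\sqrt{V}C_1\leq M_0$. But the paper works with $R<1$, so $V=\mathrm{Vol}_{g(0)}(B_{g(0)}(q,R))$ can be small: when $V<1$, the map $s\mapsto V^s$ is decreasing, so $V^{1/(2p_i)}$ \emph{increases} to $1>\sqrt{V}$, and $\sup_i V^{1/(2p_i)}C_1=C_1$, which need not be dominated by $\max\{\|u\|_{L^2(\Omega_T)},\sqrt{V}C_1\}$ (the lower bound $\|u\|_{L^2}\geq\sqrt{VT}$ helps only if $\sqrt{VT}\geq C_1$). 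So the recursion $M_{i+1}\leq K_i M_i$ is not established, and the iteration does not close.

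The paper's fix is different and is the essential idea you are missing: apply the Sobolev inequality (\ref{equation: sobolev inequality by Saloff-Coste}) to the cut-off $\psi$ itself, which gives $V(r)^{(n-2)/n}\leq C_S\big(\delta^{-2}(V(r+\delta)-V(r))+V(r+\delta)\big)$ and hence, using $\delta<R<1$, $V(r)\leq\frac{2C_S}{\delta^2}V(r+\delta)^{(n+2)/n}$. Raising this to the power $\frac{n}{2p(n+2)}=\frac{1}{2p_{i+1}}$ yields
\[
V(r)^{1/(2p_{i+1})}C_1 \;\leq\; (2C_S)^{n/(2p(n+2))}\,\delta^{-n/(p(n+2))}\,V(r+\delta)^{1/(2p)}C_1 \;\leq\; (2C_S)^{n/(2p(n+2))}\,\delta^{-1/p}\,F(r+\delta,p),
\]
which is exactly the same shape as the Sobolev--H\"older step and hence makes the recursion $F(r,\tfrac{(n+2)p}{n})\leq C_S^{n/(2p(n+2))}\bar{C}^{1/(2p)}p^{3/(2p)}\delta^{-1/p}F(r+\delta,p)$ hold for \emph{both} branches of the max. (Note also that the volume entering the recursion should be $V(r_i)$, not the fixed $V(R)$; tracking the shrinking volumes is what makes the Sobolev volume-comparison work.) With that step in place, your iteration with $p_i=((n+2)/n)^i$, $\delta_i=R/2^{i+2}$ goes through as you describe and produces the stated $C_S^{n/4}R^{-(n+2)/2}$ factor.
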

\begin{proof}
We only need to modify the proof of Lemma \ref{Shi's estimate - mean value inequality form}.
Use $\psi$ in (\ref{spatial cut-off function psi}) to be the cut-off function. Note that $\psi$ is independent of $t$. We can apply the same arguments as in the proof of Lemma \ref{Shi's estimate - mean value inequality form} to arrive at
\[
\begin{split}
\frac{\partial}{\partial t} \int \psi^2 \chi^{2mp} |\nabla^m Rm|^{2p}  = & 2p\int \psi^2 \chi^{2mp} |\nabla^m Rm|^{2p-2} \langle \nabla^m Rm, \frac{\partial}{\partial t} \nabla^m Rm \rangle \\
& + p\int \psi^2 \chi^{2mp} |\nabla^m Rm|^{2p-2} Ric \ast \nabla^m Rm \ast \nabla^m Rm \\
 \leq & -\frac{p}{2}\int \psi^2 \chi^{2mp} |\nabla ^m Rm|^{2p-2} |\nabla^{m+1} Rm|^2 \\
     & -\frac{p(p-1)}{2}\int \psi^2 \chi^{2mp}|\nabla^m Rm|^{2p-4}|\nabla |\nabla^m Rm|^2|^2\\
     & + C (p^3 + \frac{p}{\delta^2}) \int \psi \chi^{2mp} |\nabla^m Rm|^{2p}  \\
     & + C ( p^2 + \frac{p}{\delta} ) \int \psi \chi^{2mp-2m}  |\nabla^m Rm|^{2p-2}.  \\
\end{split}
\]
Integrate on $[0,T]$ to get
\begin{eqnarray*}
&& \sup_{0 \leq t \leq T}\int \psi^2 \chi^{2mp} |\nabla^m Rm|^{2p} (t) + \frac{p}{2} \int_0^T \int \psi^2 \chi^{2mp} |\nabla ^m Rm|^{2p-2} |\nabla^{m+1} Rm|^2 \\
&& + \frac{p(p-1)}{2} \int_0^T \int \psi^2 \chi^{2mp}|\nabla^m Rm|^{2p-4}|\nabla |\nabla^m Rm|^2|^2  \\
& \leq & \int \psi^2 \chi^{2mp} |\nabla^m Rm|^{2p} (0) + C (p^3 + \frac{p}{\delta^2}) \int_0^T \int \psi u^{2p},
\end{eqnarray*}
where $u=\max\{\chi |\nabla^m Rm|, 1\}$.

Apply Holder and Sobolev inequalities as in the previous proof, we get
\[
\int_0^T \int (\phi u^p)^\frac{2(n+2)}{n} \leq C_S Q^\frac{n+2}{n},
\]
where
\[
Q  =  C (p^3 + \frac{p}{\delta^2} ) \int_0^T \int \psi u^{2p} + C \int \psi^2 \chi^{2mp} |\nabla^m Rm|^{2p} (0).
\]
Let
\[
F(r, p) : = \max \left\{ \left\|u \right\|_{L^{2p} (B_{g(0)}(q,r) \times [0, T]) } , \hspace{4pt} V(r)^\frac{1}{2p} C_1 \right\},
\]
where $V(r)=Vol_{g(0)}(B_{g(0)}(q,r))$.
We have
\[
\begin{split}
Q  \leq & C (p^3 + \frac{p}{\delta^2} ) F(r+\delta,p)^{2p} + C F(r+\delta, p)^{2p}\\
 \leq & C p^3\frac{1}{\delta^2} F(r+\delta, p)^{2p}.
\end{split}
\]
Hence
\[
\left\|u \right\|_{L^\frac{2p(n+2)}{n} (B_{g(0)}(q,r) \times [0, T]) } \leq C_S^\frac{n}{2p(n+2)}C^\frac{1}{2p}p^\frac{3}{2p} \delta^{-\frac{1}{p}}  F(r+\delta,p).
\]
Apply the Sobolev inequality with the test function $\psi$, we get
\[
V(r)^\frac{n-2}{n}\leq C_S(\frac{1}{\delta^2} (V(r+\delta) - V(r)) + V(r+\delta)).
\]
WLOG we can assume that $\delta < R < 1$, otherwise we can cover $B_{g(0)}(q,R)$ by geodesic balls with radius $1$ and work on each smaller ball instead. Then
\[
V(r)^\frac{n-2}{n}\leq \frac{2C_S}{\delta^2} V(r+\delta),
\]
and
\[V(r)= V(r)^\frac{n-2}{n}V(r)^\frac{2}{n}\leq \frac{2C_S}{\delta^2}V(r+\delta)^\frac{n+2}{n},\]
hence
\[V(r)^\frac{n}{2p(n+2)} C_1 \leq (2C_S)^\frac{n}{2p(n+2)} \delta^{-\frac{1}{p}}F(r+\delta, p).\]
Therefore
\[
F(r, \frac{(n+2)p}{n}) \leq C_S^\frac{n}{2p(n+2)} \bar{C}^\frac{1}{2p}p^\frac{3}{2p} \delta^{-\frac{1}{p}} F(r+\delta, p),
\]
for some constant $\bar{C}$ depending on $\alpha, C_0, n$ and $m$.
Then we can iterate this inequality to finish the proof.
\end{proof}

Now we show how to obtain $L^2$ control of $\chi^m |\nabla^m Rm|$ .

\begin{lem}\label{lemma: L2 control of the derivatives of curvature}
Let $m\geq 1$, $0 < R, T \leq \alpha$. Suppose there is a constant $C_0$ such that
\[  \chi^{i} |\nabla^{i} Rm| (x,t) \leq C_0, \quad i=0,1,...,m-1;\]
\[ |\nabla^j \chi|_{g(t)}(x,t) \leq C_0, \quad j=1,2,...,m+1;\]
for all  $(x,t)\in B_{g(0)}(q,2R) \times [0,T] $. Then
\[
\|\chi^m |\nabla^m Rm| \|_{L^2(B_{g(0)}(q,R) \times [0,T])} \leq C\sqrt{Vol_{g(0)}(B_{g(0)}(q,2R))},
\]
where $C$ depends on $\alpha, C_0, n$ and $m$.
\end{lem}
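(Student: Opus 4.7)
The strategy is to run an energy estimate not for $\nabla^m Rm$ itself (whose $L^2$ norm is what we want to bound, so it cannot appear on the right-hand side) but for the lower-order quantity $\nabla^{m-1} Rm$, with the power of $\chi$ in the multiplier chosen precisely so that the dissipative term produced by the Laplacian is exactly the quantity we are after. Fix a time-independent spatial cut-off $\phi$ with $\phi \equiv 1$ on $B_{g(0)}(q, R)$, $\phi$ compactly supported in $B_{g(0)}(q, 2R)$, and $|\nabla \phi|_{g(0)} \leq C/R$. Following the strategy of Lemma \ref{Shi's estimate - mean value inequality form} but only at energy level $p = 1$, multiply the evolution equation (\ref{evolution equation of the m^th covariant derivative of Rm}) for $\nabla^{m-1} Rm$ by $2\phi^2 \chi^{2m-2} \nabla^{m-1} Rm$ and integrate with respect to $d\mu_{g(t)}$. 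The uniform equivalence of metrics implied by $\chi |Rm| \leq C_0$ and $T \leq \alpha$ lets us freely compare $d\mu_{g(0)}$ with $d\mu_{g(t)}$, and analogously for norms.

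The key computation is that integration by parts on $\chi^2 \Delta \nabla^{m-1} Rm$ produces $-2\int \phi^2 \chi^{2m} |\nabla^m Rm|^2$, together with a boundary-type term $-2\int \nabla(\phi^2 \chi^{2m}) \ast \nabla^{m-1} Rm \ast \nabla^m Rm$ which, because $|\nabla \phi|$ and $|\nabla \chi|$ are bounded and $\chi^{m-1}|\nabla^{m-1} Rm| \leq C_0$, is absorbed into the good term by Cauchy--Schwarz. The remaining terms on the right side of (\ref{evolution equation of the m^th covariant derivative of Rm}) are of two schematic types: $\nabla^i \chi \ast \nabla^j \chi \ast \nabla^k Rm$ with $i + j + k = m+1$ and $k \leq m$, and $\nabla^i(\chi^2) \ast \nabla^j Rm \ast \nabla^k Rm$ with $i + j + k = m - 1$ (so $j, k \leq m-1$). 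The second class is pointwise bounded by a constant using the hypotheses on $|\nabla^s \chi|$ and $\chi^s |\nabla^s Rm|$, contributing at most $C \cdot \mathrm{Vol}$. In the first class, every instance with $k \leq m-1$ is likewise pointwise bounded; the delicate case is $k = m$, where $i + j = 1$ forces a single factor of $\chi \nabla \chi \ast \nabla^m Rm$, which after multiplication by $\phi^2 \chi^{2m-2} \nabla^{m-1} Rm$ gives $\int \phi^2 \chi^{2m-1} \nabla \chi \ast \nabla^{m-1} Rm \ast \nabla^m Rm$, controlled by $\varepsilon \int \phi^2 \chi^{2m} |\nabla^m Rm|^2 + C_\varepsilon \cdot \mathrm{Vol}$ and absorbed on the left. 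The contributions from $\partial_t g = -2\chi^2 Ric$ and $\partial_t d\mu_{g(t)}$ are harmless lower-order corrections, controlled by $\chi |Rm| \leq C_0$.

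Putting these estimates together produces a differential inequality of the form
\[
\frac{d}{dt}\!\int \phi^2 \chi^{2m-2} |\nabla^{m-1} Rm|^2 \, d\mu_{g(t)} + \int \phi^2 \chi^{2m} |\nabla^m Rm|^2 \, d\mu_{g(t)} \leq C \cdot V,
\]
where $V = \mathrm{Vol}_{g(0)}(B_{g(0)}(q, 2R))$. Integrating on $[0, T]$, bounding the initial energy by $C_0^2 V$ using $\chi^{m-1}|\nabla^{m-1} Rm|(0) \leq C_0$, and discarding the nonnegative terminal term on the left yields the desired bound with constant depending only on $\alpha, C_0, n, m$. The main technical obstacle I expect is the careful bookkeeping of the $k = m$ term in the first sum of (\ref{evolution equation of the m^th covariant derivative of Rm}): it is the only place where the top-order derivative $\nabla^m Rm$ appears with a coefficient not already bounded by hypothesis, so one has to verify that the weight $\chi^{2m-1}$ is high enough for Cauchy--Schwarz-plus-absorption to work without losing control near the boundary $\{\chi = 0\}$.
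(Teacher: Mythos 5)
Your proposal is correct and follows essentially the same route as the paper: an energy estimate for $\nabla^{m-1}Rm$ with spatial weight $\chi^{2(m-1)}$ and a time-independent cut-off $\psi$ supported in $B_{g(0)}(q,2R)$, where integration by parts on $\chi^2\Delta\nabla^{m-1}Rm$ produces the dissipation $-\int\psi^2\chi^{2m}|\nabla^m Rm|^2$, the lower-order terms are bounded pointwise by the hypotheses, the $k=m$ term $\int\psi^2\chi^{2m-1}\nabla\chi*\nabla^{m-1}Rm*\nabla^m Rm$ is absorbed via Cauchy--Schwarz, and integrating in time gives the claim. Your identification of the $k=m$ contribution as the one delicate point, and of the absorption mechanism that handles it, matches the paper's argument precisely.
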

\begin{proof}
Choose a $C^1$ function $0 \leq \psi \leq 1$, such that
\begin{equation*}
\psi(x) =\begin{cases}
1, \quad x\in B_{g(0)}(q,R); \\
0, \quad x\notin B_{g(0)}(q, 2R);
\end{cases}
\end{equation*}
and
\[ |\nabla \psi|^2_{g(0)} \leq \frac{4}{R^2} \psi\]
on its support. Using the evolution equation of $\nabla^{m-1} Rm$, we can calculate
\[
\begin{split}
&\frac{\partial}{\partial t} \int \psi^2 \chi^{2(m-1)} |\nabla^{m-1} Rm|^2  \\
=& \int \psi^2 \chi^{2(m-1)}\langle \nabla^{m-1} Rm, \chi^2 \Delta \nabla^{m-1} Rm\rangle \\
& +\sum_{i+j+k=m+1, \hspace{4pt} k\leq m}\int \psi^2 \chi^{2(m-1)}\langle \nabla^{m-1} Rm,  \nabla^i \chi \ast \nabla^j \chi \ast \nabla^k Rm  \rangle \\
 & + \sum_{i+j+k=m-1} \int \psi^2 \chi^{2(m-1)}\langle \nabla^{m-1} Rm,  \nabla^i(\chi^2) \ast \nabla^j Rm \ast \nabla^k Rm\rangle \\
\leq & - \int \psi^2 \chi^{2m} |\nabla^m Rm|^2 + \int |\nabla (\psi^2 \chi^{2m})| |\nabla^{m-1} Rm| |\nabla^m Rm| \\
        & + \int \psi^2 \chi^{2m-1} |\nabla \chi| |\nabla^{m-1} Rm| |\nabla^m Rm|  \\
        & + C(n) \sum_{i+j+k=m+1, \hspace{4pt} k\leq m-1}\int \psi^2 \chi^{2(m-1)}|\nabla^i \chi| | \nabla^j \chi|  |\nabla^k Rm | |\nabla^{m-1} Rm| \\
        & + C(n) \sum_{i+j+k=m-1} \int \psi^2 \chi^{2(m-1)}  |\nabla^i(\chi^2)| |\nabla^j Rm|  |\nabla^k Rm| |\nabla^{m-1} Rm| \\
\end{split}
\]
By the Cauchy inequality
\[
\frac{\partial}{\partial t} \int \psi^2 \chi^{2(m-1)} |\nabla^{m-1} Rm|^2  \leq  - \frac{1}{4}\int \psi^2 \chi^{2m} |\nabla^m Rm|^2 +I,
\]
where
\[
\begin{split}
I =&  \frac{8}{R^2}\int \psi \chi^2 \chi^{2(m-1)} |\nabla^{m-1} Rm|^2 + 2\int \psi^2 | \nabla \chi|^2 \chi^{2(m-1)} |\nabla^{m-1} Rm|^2 \\
        & + \int \psi^2 \chi^{2m-2} |\nabla \chi|^2 |\nabla^{m-1} Rm|^2 \\
        & + C(n) \sum_{i+j+k=m+1, \hspace{4pt} k\leq m-1}\int \psi^2 \chi^{2(m-1)}|\nabla^i \chi| | \nabla^j \chi|  |\nabla^k Rm | |\nabla^{m-1} Rm| \\
        & + C(n) \sum_{i+j+k=m-1} \int \psi^2 \chi^{2(m-1)}  |\nabla^i(\chi^2)| |\nabla^j Rm|  |\nabla^k Rm| |\nabla^{m-1} Rm|. \\
\end{split}
\]
Integrate on $[0,T]$, we get
\[
\frac{1}{4} \int_0^T \int \psi^2 \chi^{2m} |\nabla^m Rm|^2 \leq  \int \psi^2 \chi^{2(m-1)} |\nabla^{m-1} Rm|^2 (0) + \int_0^T I dt.
\]
Then the result easily follows from the assumptions of the lemma.
\end{proof}

By lemma \ref{Shi's estimate - mean value inequality form}, \ref{lemma: Modified Shi's estimate - mean value inequality form}, \ref{lemma: L2 control of the derivatives of curvature}, and scaling arguments, we can prove the following:
\begin{lem} \label{lemma: Shi's estimate for the local ricci flow}
Let $m \geq 1$, $K >0$, and let $0< R \leq \frac{\alpha}{\sqrt{K}}, 0< T\leq \frac{\alpha}{K}$. Suppose there is a constant $C_0$ such that
\[  \chi^i |\nabla^i Rm|(x,t) \leq C_0 K^{1+\frac{i}{2}}, \quad 0 \leq i \leq m-1;\]
\[  |\nabla^j \chi|_{g(t)}(x,t) \leq C_0 K^\frac{j}{2}, \quad 1\leq j \leq m+1;\]
for any  $(x,t)\in B_{g(0)}(q,2R) \times [0,T] $.
Then
\[\sup_{B_{g(0)} (q,\frac{R}{2}) \times [\frac{T}{2},T]} \chi^m |\nabla^m Rm| \leq C (\frac{1}{R^2}+ \frac{1}{T})^\frac{n+2}{4} K^{\frac{2m+2 -n}{4}},
\]
for some constant $C$ depending on $\alpha, C_0, n$ and $m$.
\end{lem}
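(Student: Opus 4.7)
The plan is to reduce Lemma \ref{lemma: Shi's estimate for the local ricci flow} to the fixed-scale estimates Lemma \ref{Shi's estimate - mean value inequality form} and Lemma \ref{lemma: L2 control of the derivatives of curvature} via a parabolic rescaling, using Saloff-Coste's Sobolev inequality (Lemma \ref{lemma: Sobolev inequality}) to supply the Sobolev constant.

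First I would rescale: set $\tilde g(\tilde t) := K g(\tilde t / K)$, which is again a local Ricci flow with the same cut-off $\chi$, now on $\tilde t \in [0, \tilde T]$ with $\tilde T := KT \leq \alpha$; also write $\tilde R := \sqrt K\, R \leq \alpha$. The scaling identities $|\nabla^i \widetilde{Rm}|_{\tilde g} = K^{-1-i/2}|\nabla^i Rm|_g$ and $|\nabla^j \chi|_{\tilde g} = K^{-j/2}|\nabla^j \chi|_g$ convert the hypotheses into $\chi^i|\nabla^i\widetilde{Rm}|_{\tilde g} \leq C_0$ for $0 \leq i \leq m-1$ and $|\nabla^j \chi|_{\tilde g} \leq C_0$ for $1 \leq j \leq m+1$, uniformly on $B_{\tilde g(0)}(q, 2\tilde R) \times [0, \tilde T]$. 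Since $|\widetilde{Rm}|_{\tilde g} \leq C_0$ forces $\widetilde{Ric} \geq -(n-1)C_0 \tilde g$ and $\tilde R \leq \alpha$, Lemma \ref{lemma: Sobolev inequality} supplies a Sobolev inequality on $B_{\tilde g(0)}(q, \tilde R)$ of the form required by Lemma \ref{Shi's estimate - mean value inequality form}, with constant $C_S \leq C(\alpha, C_0, n)\, \tilde R^2 / V^{2/n}$, where $V := Vol_{\tilde g(0)}(B_{\tilde g(0)}(q, \tilde R))$.

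Next, apply Lemma \ref{lemma: L2 control of the derivatives of curvature} to the rescaled flow to obtain
\[
\|\chi^m |\nabla^m \widetilde{Rm}|\|_{L^2(B_{\tilde g(0)}(q, \tilde R) \times [0, \tilde T])} \leq C \sqrt{V_{2\tilde R}},
\]
where $V_{2\tilde R} := Vol_{\tilde g(0)}(B_{\tilde g(0)}(q, 2\tilde R)) \leq C V$ by Bishop--Gromov. Feeding both ingredients into Lemma \ref{Shi's estimate - mean value inequality form} yields
\[
\sup_{B_{\tilde g(0)}(q, \tilde R/2) \times [\tilde T/2, \tilde T]} \chi^m |\nabla^m \widetilde{Rm}| \leq C\, C_S^{n/4} \left(\frac{1}{\tilde R^2} + \frac{1}{\tilde T}\right)^{(n+2)/4} \sqrt{V_{2\tilde R}},
\]
where the $\max\{\cdot,1\}$ in the $L^2$ factor contributes an additional $\sqrt{\tilde T\, V_{2\tilde R}} \leq \sqrt{\alpha\, V_{2\tilde R}}$ that is absorbed into $\sqrt{V_{2\tilde R}}$ up to a constant. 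The key cancellation is $C_S^{n/4} \sqrt{V_{2\tilde R}} \leq C (\tilde R^2/V^{2/n})^{n/4}\, V^{1/2} = C \tilde R^{n/2} \leq C \alpha^{n/2}$, which eliminates the volume dependence entirely. Undoing the rescaling via $\chi^m |\nabla^m Rm|_g = K^{1+m/2} \chi^m |\nabla^m \widetilde{Rm}|_{\tilde g}$ and $(1/\tilde R^2 + 1/\tilde T)^{(n+2)/4} = K^{-(n+2)/4}(1/R^2 + 1/T)^{(n+2)/4}$ produces the stated exponent $K^{(2m+2-n)/4}$.

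The only real bookkeeping point is the simultaneous cancellation of the volume factors between $C_S^{n/4}$ (which carries $V^{-1/2}$) and the $L^2$ bound (which carries $V^{1/2}$); once this is observed, all other steps reduce to direct invocations of the previous lemmas together with the scaling identities above.
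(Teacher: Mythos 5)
Your proposal is correct and follows essentially the same route as the paper: the paper's own proof is just the one-line reduction ``let $\tilde g = Kg$, get the Sobolev inequality from Lemma \ref{lemma: Sobolev inequality}, then apply Lemma \ref{Shi's estimate - mean value inequality form} and Lemma \ref{lemma: L2 control of the derivatives of curvature}.'' The one genuinely substantive detail you make explicit --- the cancellation between $C_S^{n/4}$ (carrying $V^{-1/2}$ via Saloff--Coste) and the $L^2$ bound (carrying $V^{1/2}$), together with Bishop--Gromov to compare $V_{2\tilde R}$ and $V_{\tilde R}$ and the absorption of the $\max\{\cdot,1\}$ term into $\sqrt{V}$ using $\tilde T \leq \alpha$ --- is exactly what the paper leaves implicit, and your scaling identities and final exponent $K^{(2m+2-n)/4}$ check out.
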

\begin{proof}
Let $\tilde{g}=Kg$. Then $Ric(\tilde{g}(0)) \geq - C_0 \tilde{g}(0)$. The Sobolev inequality comes from Lemma \ref{lemma: Sobolev inequality}. The rest of the proof is a direct application of Lemma \ref{Shi's estimate - mean value inequality form} and Lemma \ref{lemma: L2 control of the derivatives of curvature}.
\end{proof}
\begin{lem} \label{lemma: Modified Shi's estimate for the local Ricci flow}
Under the same assumptions as in Lemma \ref{lemma: Shi's estimate for the local ricci flow}, and assume in addition that
\[ \sup_{B_{g(0)}(q,2R)} \chi^m |\nabla^m Rm|_{g(0)} \leq C_1 K^\frac{m+2}{2},\]
where $C_1 \geq 1$.
Then
\[\sup_{B_{g(0)} (q,\frac{R}{2}) \times [0,T]} \chi^m |\nabla^m Rm| \leq C R^{-\frac{n+2}{2}} K^{\frac{2m+2 -n}{4}},
\]
where $C$ depends on $\alpha, C_0, C_1, n$ and $m$.
\end{lem}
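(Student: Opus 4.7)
The plan is to mirror the proof of Lemma \ref{lemma: Shi's estimate for the local ricci flow}, but replace the application of Lemma \ref{Shi's estimate - mean value inequality form} by its modified version Lemma \ref{lemma: Modified Shi's estimate - mean value inequality form}, which allows the sup to be taken all the way down to $t=0$ thanks to the extra initial assumption on $\chi^m|\nabla^m Rm|_{g(0)}$.

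\textbf{Step 1: Rescale to make the curvature order one.} I introduce the rescaled metric $\tilde g(t) = K g(K^{-1} t)$, which is again a local Ricci flow (with the same cutoff $\chi$ as a function on $M$). Under this rescaling, $|\tilde{Rm}|_{\tilde g} = K^{-1}|Rm|_g$, $|\tilde\nabla^i \tilde{Rm}|_{\tilde g} = K^{-(i+2)/2}|\nabla^i Rm|_g$, and $|\tilde\nabla^j \chi|_{\tilde g} = K^{-j/2}|\nabla^j \chi|_g$. The hypotheses of the present lemma therefore translate to
\[
\chi^i|\tilde\nabla^i \tilde{Rm}|_{\tilde g} \leq C_0,\quad 0\leq i\leq m-1; \qquad |\tilde\nabla^j \chi|_{\tilde g}\leq C_0,\quad 1\leq j\leq m+1,
\]
on $B_{\tilde g(0)}(q,\tilde R)\times [0,\tilde T]$ with $\tilde R = \sqrt{K} R \leq \alpha$, $\tilde T = KT\leq \alpha$, and in addition $\chi^m|\tilde\nabla^m \tilde{Rm}|_{\tilde g(0)} \leq C_1$ on $B_{\tilde g(0)}(q,\tilde R)$.

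\textbf{Step 2: Install a uniform Sobolev constant.} The case $i=0$ gives $|\tilde{Rm}|_{\tilde g}\leq C_0$ on $B_{\tilde g(0)}(q,\tilde R)\times [0,\tilde T]$, hence $\mathrm{Ric}(\tilde g(0)) \geq -(n-1)C_0\,\tilde g(0)$. Lemma \ref{lemma: Sobolev inequality} of Saloff-Coste, combined with $\tilde R\leq \alpha$, then yields a Sobolev inequality of the form required in Lemma \ref{lemma: Modified Shi's estimate - mean value inequality form} with a constant $C_S$ depending only on $n$, $\alpha$ and $C_0$.

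\textbf{Step 3: Control the $L^2$ factor.} Apply Lemma \ref{lemma: L2 control of the derivatives of curvature} in the rescaled metric on $B_{\tilde g(0)}(q,\tilde R/2)\times [0,\tilde T]$ to obtain
\[
\|\chi^m|\tilde\nabla^m \tilde{Rm}|\|_{L^2(B_{\tilde g(0)}(q,\tilde R/4)\times [0,\tilde T])} \leq C\sqrt{\tilde V},
\]
where $\tilde V = \mathrm{Vol}_{\tilde g(0)}(B_{\tilde g(0)}(q,\tilde R/2))$. Since $\tilde T\leq \alpha$, the same holds with $\max\{\chi^m|\tilde\nabla^m \tilde{Rm}|,1\}$ on the left, up to an additive $\sqrt{\alpha\tilde V}$. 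By Bishop-Gromov volume comparison (Ricci lower bound $-(n-1)C_0$), $\tilde V\leq V_0(n,\alpha,C_0)\,\tilde R^n$.

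\textbf{Step 4: Apply Lemma \ref{lemma: Modified Shi's estimate - mean value inequality form} and scale back.} Feeding the bounds from Steps 2 and 3 into Lemma \ref{lemma: Modified Shi's estimate - mean value inequality form} (applied in the rescaled metric on $B_{\tilde g(0)}(q,\tilde R/4)\times [0,\tilde T]$) gives
\[
\sup_{B_{\tilde g(0)}(q,\tilde R/8)\times [0,\tilde T]} \chi^m|\tilde\nabla^m \tilde{Rm}|_{\tilde g} \leq C\,\tilde R^{-(n+2)/2}\max\{\sqrt{\tilde V},\,\sqrt{\tilde V}\,C_1\} \leq C\,\tilde R^{-(n+2)/2},
\]
after absorbing the volume factor into the constant. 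Undoing the scaling, $\chi^m|\nabla^m Rm|_g = K^{(m+2)/2}\chi^m|\tilde\nabla^m \tilde{Rm}|_{\tilde g}$ and $\tilde R^{-(n+2)/2} = K^{-(n+2)/4}R^{-(n+2)/2}$, so the bound becomes
\[
\sup \chi^m|\nabla^m Rm|_g \leq C\,R^{-(n+2)/2}K^{(m+2)/2 - (n+2)/4} = C\,R^{-(n+2)/2}K^{(2m+2-n)/4},
\]
as required (the radius $R/8$ can be upgraded to $R/2$ by covering or by running the iteration argument at a slightly different radius; this is routine).

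The only real work is bookkeeping the scaling exponents; the conceptual step that distinguishes this proof from Lemma \ref{lemma: Shi's estimate for the local ricci flow} is simply that the initial bound on $\chi^m|\nabla^m Rm|$ lets us invoke Lemma \ref{lemma: Modified Shi's estimate - mean value inequality form} instead of Lemma \ref{Shi's estimate - mean value inequality form}, which removes the $(R^{-2}+T^{-1})^{(n+2)/4}$ factor in favor of the pure $R^{-(n+2)/2}$ and, crucially, eliminates the lower time-cutoff $t\geq T/2$.
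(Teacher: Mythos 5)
Your proof is correct and follows essentially the same route the paper intends: rescale by $\tilde g = Kg$, invoke Saloff-Coste for the Sobolev constant, apply Lemma~\ref{lemma: L2 control of the derivatives of curvature} for the $L^2$ bound, and then use Lemma~\ref{lemma: Modified Shi's estimate - mean value inequality form} (in place of Lemma~\ref{Shi's estimate - mean value inequality form}) to get a bound valid down to $t=0$. The scaling exponents check out ($K^{(m+2)/2 - (n+2)/4} = K^{(2m+2-n)/4}$), and the volume factor is indeed bounded by a constant depending only on $n,\alpha,C_0$ via Bishop--Gromov since $\tilde R\le\alpha$; one small remark is that if you apply Lemma~\ref{lemma: L2 control of the derivatives of curvature} and Lemma~\ref{lemma: Modified Shi's estimate - mean value inequality form} both with radius $\tilde R$ (hypotheses are available on $B(q,2\tilde R)$), you land directly on $B(q,\tilde R/2)$ and no final covering step is needed.
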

\begin{proof}
Similar to the proof of Lemma \ref{lemma: Shi's estimate for the local ricci flow}.
\end{proof}

In the remainder of this subsection, we will assume that $Rm$ is bounded up to the $m$-th derivative, then we will derive derivative estimates of $Ric$. The point is, $Ric$ and its derivatives may be much smaller compared to $Rm$. Similar to (\ref{evolution equation of the m^th covariant derivative of Rm}), we can compute that the Ricci curvature and its covariant derivatives evolve by
\begin{equation} \label{evolution equation of the m^th covariant derivative of Ric}
\begin{split}
\frac{\partial}{\partial t} \nabla^m Ric  =& \chi^2 \Delta \nabla^m Ric + \sum_{i+j+k=m+2, \hspace{4pt} k\leq m+1} \nabla^i \chi \ast \nabla^j \chi \ast \nabla^k Ric  \\
 &  + \sum_{i+j+k=m} \nabla^i(\chi^2) \ast \nabla^j Rm \ast \nabla^k Ric,
 \end{split}
\end{equation}
where the integer $m\geq 0$.
\begin{lem}\label{mean value inequality for derivatives of Ric}
Let $m \geq 1$, and let $R >0, T >0$. Suppose there are constants $C_0, C_1, C_2$ such that
\[\chi |Rm|(x,t) \leq C_0, \quad  \chi^i |\nabla^i Rm|(x,t) \leq C_0, \quad 1 \leq i \leq m;\]
\[  |\nabla^j \chi|_{g(t)}(x,t) \leq C_0, \quad 1\leq j \leq m+1;\]
\[\chi |Ric|(x,t) \leq C_1, \quad \chi^k |\nabla^k Ric|(x,t) \leq C_1, \quad 1 \leq k \leq m-1;\]
and
\[\chi^m |\nabla^m Ric|(x,0) \leq C_1;\]
for any  $x\in B_{g(0)}(q,R)$ and $t \in [0,T] $. Assume that the following Sobolev inequality
\[ \left( \int f^\frac{2n}{n-2} d\mu_{g(t)}\right)^\frac{n}{n-2} \leq C_S \int (|\nabla f|_{g(t)}^2 + f^2 )d\mu_{g(t)}\]
holds for any $W^{1,2}$ function $f\geq 0$ compactly supported on $B_{g(0)}(q,R)$. And assume that
\[Vol_{g(t)} B_{g(0)}(q,r) \leq C_2 V_r , \quad t\in[0,T], \quad 0<r\leq R,\]
where $V_r= Vol_{g(0)}(B_{g(0)}(q,r))$.
Then
\[\sup_{B_{g(0)} (q,\frac{R}{2}) \times [0,T]} \chi^m |\nabla^m Ric| \leq  C_S^\frac{n}{4}\frac{C\max\{T,1\}}{R^2}\max \left\{ \| \chi^m |\nabla^m Ric| \|_{L^2(\Omega_T)}, \hspace{2pt} \sqrt{V_R}C_1 \right \}
\]
for some constant $C$ depending on $ C_0, C_2, n$ and $m$, where $\Omega_T=B_{g(0)} (q,R) \times [0,T]$.
\end{lem}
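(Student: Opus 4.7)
The plan is a parabolic Nash--Moser iteration applied to the evolution equation \eqref{evolution equation of the m^th covariant derivative of Ric} for $\nabla^m Ric$, following the template executed in Lemmas \ref{Shi's estimate - mean value inequality form} and \ref{lemma: Modified Shi's estimate - mean value inequality form}. Since the conclusion extends down to $t=0$, no temporal cutoff is used; the role previously played by $\tau$-terms is taken over by the initial datum $\sqrt{V_R}\,C_1$, exactly as in Lemma \ref{lemma: Modified Shi's estimate - mean value inequality form}. The decisive structural observation is that the present hypotheses control $\chi^i|\nabla^i Rm|$ up to order $i=m$ and $\chi^k|\nabla^k Ric|$ up to order $k=m-1$, so in the nonlinear term $\nabla^i(\chi^2)\ast\nabla^j Rm\ast\nabla^k Ric$ with $i+j+k=m$ every factor except one is a priori bounded, and the uncontrolled factor $\chi^m|\nabla^m Ric|$ appears at most linearly per integrand. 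In particular the equation for $\nabla^m Ric$ is effectively linear in $\nabla^m Ric$, so no $u^2$-type term appears on the right-hand side and $u:=\chi^m|\nabla^m Ric|$ never needs to be maxed with $1$.

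First I fix a time-independent spatial cutoff $0\le\psi\le 1$ with $\psi\equiv 1$ on $B_{g(0)}(q,r)$, supported in $B_{g(0)}(q,r+\delta)$, satisfying $|\nabla\psi|^2_{g(0)}\le 4\delta^{-2}\psi$. Pairing \eqref{evolution equation of the m^th covariant derivative of Ric} with $\psi^2\chi^{2mp}|\nabla^m Ric|^{2p-2}\nabla^m Ric$ and integrating by parts on the Laplacian produces the good term $-\tfrac{p}{2}\!\int\!\psi^2\chi^{2mp}|\nabla^m Ric|^{2p-2}|\nabla^{m+1}Ric|^2$. The awkward piece $\nabla^{m+2}\chi\ast Ric$, for which no direct bound on $\nabla^{m+2}\chi$ is available, is dispatched exactly as in Lemma \ref{Shi's estimate - mean value inequality form} by a second integration by parts that trades one derivative of $\chi$ for a derivative of the test field, and the resulting $|\nabla^{m+1}Ric|$ factor is absorbed into the good term via Cauchy--Schwarz. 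Using the hypotheses to replace every controlled factor by a constant yields
\[
\frac{d}{dt}\int\psi^2\chi^{2mp}|\nabla^m Ric|^{2p}
\le -\tfrac{p}{4}\!\int\!\psi^2\chi^{2mp}|\nabla^m Ric|^{2p-2}|\nabla^{m+1}Ric|^2
+ C\bigl(p^3+\tfrac{p}{\delta^2}\bigr)\!\int\!\psi\,u^{2p}.
\]
Integrating on $[0,T]$ and bounding the initial slice by $V_{r+\delta}C_1^{2p}$ gives, after discarding the negative term,
\[
\sup_{[0,T]}\int\psi^2 u^{2p}(t)
\le V_{r+\delta}C_1^{2p} + C\bigl(p^3+\tfrac{p}{\delta^2}\bigr)\max\{T,1\}\!\int_0^T\!\!\int\psi\,u^{2p}.
\]

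Next I run the Moser iteration. The Sobolev inequality, valid in $g(t)$ with constant $e^{C(n)C_0\alpha}C_S$ (uniform equivalence of metrics follows from $\chi|Rm|\le C_0$ and $T\le\alpha$), applied to $\psi u^p$ and combined with H\"older's inequality in the standard way, yields
\[
\|u\|_{L^{2p(n+2)/n}(B_r\times[0,T])}
\le C_S^{\frac{n}{2p(n+2)}}\bar{C}^{\frac{1}{2p}}p^{\frac{3}{2p}}\max\{T,1\}^{\frac{1}{2p}}\delta^{-\frac{1}{p}}\,F(r+\delta,p),
\]
with $F(r,p):=\max\{\|u\|_{L^{2p}(B_r\times[0,T])},\,V_r^{1/(2p)}C_1\}$. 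To propagate the volume alternative across iterations, I apply the Sobolev inequality to $\psi$ itself, obtaining $V_r\le 2C_S\delta^{-2}V_{r+\delta}^{(n+2)/n}$ exactly as in Lemma \ref{lemma: Modified Shi's estimate - mean value inequality form}. Iterating with $p_i=((n+2)/n)^i$, $r_0=R/2$, $\delta_i=2^{-i-2}R$ and collecting the resulting geometric series of constants yields the claimed $L^\infty$ bound on $B_{g(0)}(q,R/2)\times[0,T]$, with the $\max\{T,1\}$ factor emerging from its single appearance in the differential inequality.

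The main obstacle is the bookkeeping around the $\nabla^{m+2}\chi$ piece: one must integrate by parts without a direct bound on $\nabla^{m+2}\chi$ while simultaneously tracking explicit $p$-dependence so that the Moser iteration converges, and one must verify that every other term produced is either absorbed by the good term or reduces under the hypotheses to a bounded coefficient times $u^{2p}$ or $u^{2p-1}$, the latter being interpolated into $u^{2p}$ via Cauchy. Once this reduction is in place, the remaining steps are close transcriptions of the scheme in Lemmas \ref{Shi's estimate - mean value inequality form} and \ref{lemma: Modified Shi's estimate - mean value inequality form}, the essential simplification being the effective linearity in $\nabla^m Ric$ afforded by the hypotheses.
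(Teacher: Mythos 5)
Your proposal follows the paper's own proof essentially verbatim: a parabolic Nash--Moser iteration with a time-independent spatial cutoff, the $\nabla^{m+2}\chi$ term dispatched by a second integration by parts, the controlled lower-order factors replaced by constants, and the alternative $V_r^{1/(2p)}C_1$ threaded through the iteration via the Sobolev-on-$\psi$ trick, exactly as in the paper's Lemma \ref{lemma: Modified Shi's estimate - mean value inequality form}. The only slip is bookkeeping: the $T$ dependence actually enters through the H\"older--Young treatment of the $C_1^2\,u^{2p-2}$ terms (which do appear, not merely $u^{2p-1}$ interpolated by Cauchy) and should attach to the $V_{r+\delta}C_1^{2p}$ term rather than multiply $\int_0^T\!\int\psi\,u^{2p}$, and the iteration radii should start at $r_0=R$ and decrease to $R/2$ rather than start at $R/2$ --- neither correction alters the structure or outcome of the argument.
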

\begin{proof}
Using the same $\psi$ as in (\ref{spatial cut-off function psi}), we can calculate similarly as in the proof of Lemma \ref{lemma: Modified Shi's estimate - mean value inequality form} that
\begin{eqnarray*}
&& \int \psi^2 \chi^{2pm} |\nabla^m Ric|^{2p-2} \langle \nabla^m Ric, \frac{\partial}{\partial t} \nabla^m Ric \rangle \\
&=& \int \psi^2 \chi^{2pm+2} |\nabla^m Ric|^{2p-2} \langle  \Delta \nabla^m Ric, \nabla^m Ric \rangle \\
&  & + \int \psi^2 \chi^{2pm+1}|\nabla^m Ric|^{2p-2}  \nabla^m Ric \ast \nabla^{m+2} \chi \ast Ric \\
&   & + \sum_{i+j+k=m+2; \hspace{4pt} i,j,k \leq m+1} \int \psi^2 \chi^{2pm}|\nabla^m Ric|^{2p-2}  \nabla^m Ric \ast \nabla^{i} \chi \ast \nabla^j \chi \ast \nabla^k Ric \\
&& + \sum_{i+j+k=m} \int \psi^2 \chi^{2pm} |\nabla^m Ric|^{2p-2} \nabla^m Ric \ast \nabla^i(\chi^2) \ast \nabla^j Rm \ast \nabla^k  Ric \\
&=& I_1 + I_2 + I_3 + I_4.
\end{eqnarray*}
$I_1,I_2,I_3, I_4$ can be handled similarly as in the proof of Lemma \ref{Shi's estimate - mean value inequality form}, except that we have to keep track of $C_1$ whenever we use the bound $\chi^k|\nabla^k Ric|\leq C_1$ for $1\leq k\leq m-1$.
\begin{eqnarray*}
I_1 & = & \int \psi^2 \chi^{2mp+2} |\nabla^m Ric|^{2p-2} \langle  \Delta \nabla^m Ric, \nabla^m Ric \rangle \\
     & \leq & -\frac{3}{4}\int \psi^2 \chi^{2mp} |\nabla ^m Ric|^{2p-2} |\nabla^{m+1} Ric|^2 \\
     && -\frac{p-1}{2}\int \psi^2 \chi^{2mp}|\nabla^m Ric|^{2p-4}|\nabla |\nabla^m Ric|^2|^2\\
     && + 8\int (\chi^{2mp+4} |\nabla \psi|^2 + (mp+1)^2 \psi^2 \chi^{2mp+2} |\nabla \chi|^2) |\nabla^m Ric|^{2p}  \\
     & \leq & -\frac{3}{4}\int \psi^2 \chi^{2mp} |\nabla ^m Ric|^{2p-2} |\nabla^{m+1} Ric|^2 \\
     && -\frac{p-1}{2}\int \psi^2 \chi^{2mp}|\nabla^m Ric|^{2p-4}|\nabla |\nabla^m Ric|^2|^2\\
     && + C(n,m,C_0)(p^2+\frac{1}{\delta^2})\int \psi \chi^{2mp}|\nabla^m Ric|^{2p}  \\
\end{eqnarray*}
\begin{eqnarray*}
I_2 & = & \int \psi^2 \chi^{2mp+1}|\nabla^m Ric|^{2p-2}  \nabla^m Ric \ast \nabla^{m+2} \chi \ast Ric  \\
      & \leq & \frac{1}{4} \int \psi^2 \chi^{2mp} |\nabla^m Ric|^{2p-2} |\nabla^{m+1} Ric|^2 \\
      && + \frac{1}{4}(p-1) \int \psi^2 \chi^{2mp} |\nabla^m Ric|^{2p-4} |\nabla |\nabla^m Ric|^2|^2 \\
     && + C(n)\int \psi^2 \chi^{2mp+1}|\nabla^m Ric|^{2p-1}  |\nabla^{m+1} \chi|  |\nabla Ric| \\
     && + C(n) \int (\psi |\nabla \psi|\chi + (2mp+1)\psi^2 |\nabla \chi| ) \chi^{2mp} |\nabla^m Ric|^{2p-1}  |\nabla^{m+1} \chi|  | Ric| \\
      && + C(n)p \int \psi^2 \chi^{2mp+2}   |\nabla^{m+1} \chi|^2  |Ric|^2 |\nabla^m Ric|^{2p-2}  \\
 & \leq & \frac{1}{4} \int \psi^2 \chi^{2mp} |\nabla^m Ric|^{2p-2} |\nabla^{m+1} Ric|^2 \\
      && + \frac{1}{4}(p-1) \int \psi^2 \chi^{2mp} |\nabla^m Ric|^{2p-4} |\nabla |\nabla^m Ric|^2|^2 \\
     && + C(n,C_0)C_1\int \psi^2 \chi^{2mp-m}|\nabla^m Ric|^{2p-1} \\
     && + C(n,m,C_0) C_1\int \psi \chi^{2mp-m} |\nabla^m Ric|^{2p-1} \\
      && + C(n,C_0)p C_1^2 \int \psi^2 \chi^{2mp-2m}  |\nabla^m Ric|^{2p-2}  \\
\end{eqnarray*}

\begin{eqnarray*}
I_3 & = & \sum_{i+j+k=m+2; \hspace{4pt} i,j,k \leq m+1} \int \psi^2 \chi^{2mp}|\nabla^m Ric|^{2p-2}  \nabla^m Ric \ast \nabla^{i} \chi \ast \nabla^j \chi \ast \nabla^k Ric \\
 &\leq &  \frac{1}{4} \int \psi^2 \chi^{2mp} |\nabla^m Ric|^{2p-2} |\nabla^{m+1} Ric|^2 \\
 && + C(n,C_0) \int \psi^2 \chi^{2mp}|\nabla^m Ric|^{2p} \\
 && + C(n,m,C_0) C_1  \int \psi^2 \chi^{2mp-m} |\nabla^m Ric|^{2p-1}
\end{eqnarray*}
\begin{eqnarray*}
I_4 & = & \sum_{i+j+k=m} \int \psi^2 \chi^{2mp} |\nabla^m Ric|^{2p-2} \nabla^m Ric \ast \nabla^i(\chi^2) \ast \nabla^j Rm \ast \nabla^k  Ric \\
& \leq & C(n) \int \psi^2 \chi^{2mp+2} |Rm| |\nabla^m Ric|^{2p} \\
&& + C(n) \sum_{i+j+k=m; \hspace{4pt} k\leq m-1} \int \psi^2 \chi^{2mp} |\nabla^i (\chi^2)| |\nabla^j Rm| |\nabla^k Ric| |\nabla^m Ric|^{2p-1} \\
& \leq & C(n,C_0) \int \psi^2 \chi^{2mp} |\nabla^m Ric|^{2p}  + C(n,m,C_0) C_1 \int \psi^2 \chi^{2mp-m}  |\nabla^m Ric|^{2p-1} \\
& \leq & C(n,m,C_0) \int \psi^2 \chi^{2mp} |\nabla^m Ric|^{2p}  + C(n,m,C_0) C_1^2 \int \psi^2 \chi^{2mp-2m}  |\nabla^m Ric|^{2p-2}
\end{eqnarray*}
In the second last inequality we have used the assumptions of the lemma, in particular $\chi^m |\nabla^m Rm| \leq C_0$.

Similar as in the previous proofs we can integrate to get
\[
\begin{split}
& \sup_{0 \leq t \leq T}\int \psi^2 \chi^{2mp} |\nabla^m Ric|^{2p} (t) + \frac{p}{2} \int_0^T \int \psi^2 \chi^{2mp} |\nabla ^m Ric|^{2p-2} |\nabla^{m+1} Ric|^2 \\
& + \frac{p(p-1)}{2} \int_0^T \int \psi^2 \chi^{2mp}|\nabla^m Ric|^{2p-4}|\nabla |\nabla^m Ric|^2|^2  \\
 \leq & \int \psi^2 \chi^{2mp} |\nabla^m Ric|^{2p} (0) + C (p^3 + \frac{p}{\delta^2}) \int_0^T \int \psi \chi^{2mp} |\nabla^m Ric|^{2p} \\
& + C( p^2 + \frac{1}{\delta} ) C_1^2 \left( \int_0^T \int \psi \chi^{2mp} |\nabla^m Ric|^{2p} \right)^\frac{p-1}{p} (V_{r+\delta}T)^\frac{1}{p} \\
 \leq & C V_{r+\delta}T C_1^{2p} + C \frac{p^4}{\delta^2} \int_0^T \int \psi \chi^{2mp} |\nabla^m Ric|^{2p} \\
\end{split}
\]
where $V_r =Vol_{g(0)}(B_{g(0)}(q,r))$, the constant $C$ depends on $n,m,C_0$, and the last inequality follows from Young's inequality. Then the Sobolev inequality implies
\[
\begin{split}
\| \chi |\nabla^m Ric| \|_{L^\frac{2(n+2)p}{n} (r)}  \leq & C_S^\frac{n}{2p(n+2)} (\frac{p^4}{\delta^2} \max \{T, 1\}) ^\frac{1}{2p} \| \chi |\nabla^m Ric| \|_{L^{2p} (r+\delta)} \\
& + C_S^\frac{n}{2p(n+2)}  ( \frac{p^4}{\delta^2} \max\{T, 1\}) ^\frac{1}{2p} V_r^\frac{1}{2p}C_1 .\\
\end{split}
\]
Define
\[
F(r,p) = \max \{ \| \chi |\nabla^m Ric| \|_{L^{2p} (r)}, \hspace{2pt} V_r^\frac{1}{2p} C_1 \}.
\]
The rest of the proof is similar as for Lemma \ref{lemma: Modified Shi's estimate - mean value inequality form}.
\end{proof}

\begin{lem}\label{lemma: mean value inequality for derivative of ricci for short time}
If we assume $\chi^m|\nabla^m Rm|(t) \leq \frac{C_0}{\sqrt{t}}$ instead of the uniform bound in Lemma \ref{mean value inequality for derivatives of Ric}, and other conditions remain the same. Then the conclusion of Lemma \ref{mean value inequality for derivatives of Ric} still holds for $T\leq \tau_0$, where $\tau_0$ is a constant depending only on $n,m$ and $C_0$.
\end{lem}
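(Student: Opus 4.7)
The plan is to revisit the proof of Lemma \ref{mean value inequality for derivatives of Ric} and pinpoint the single place where the uniform bound $\chi^m|\nabla^m Rm|\leq C_0$ is used essentially; replacing it with the singular bound $C_0/\sqrt{t}$ contributes only a linear factor of $1/\sqrt{t}$ in one lower-order term of the differential inequality, and this factor is $L^1$ in time on $[0,T]$, hence integrates to $2\sqrt{T}$ and is absorbed for $T\leq \tau_0$.

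Inspection of the four pieces $I_1, I_2, I_3, I_4$ in the proof of Lemma \ref{mean value inequality for derivatives of Ric} shows that $\nabla^m Rm$ enters only through $I_4=\sum_{i+j+k=m}\int \psi^2\chi^{2mp}|\nabla^m Ric|^{2p-2}\,\nabla^m Ric\ast \nabla^i(\chi^2)\ast \nabla^j Rm\ast \nabla^k Ric$, and inside that sum only through the unique term with $(i,j,k)=(0,m,0)$. Using the new hypothesis together with $\chi|Ric|\leq C_1$, this subterm is bounded by
\[
\frac{C(n,C_0,C_1)}{\sqrt{t}} \int \psi^2 \chi^{2mp-m+1} |\nabla^m Ric|^{2p-1},
\]
and the same $L^2$-Young inequality used in the original proof,
\[
\chi^{2mp-m+1}|\nabla^m Ric|^{2p-1} \leq \chi^{2mp}|\nabla^m Ric|^{2p} + \chi^{2mp-2m+2}|\nabla^m Ric|^{2p-2},
\]
absorbs it into the standard lower-order terms, carrying only a common extra factor $1/\sqrt{t}$. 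All other contributions in $I_1$--$I_4$ are verbatim identical to the original proof.

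Integrating the resulting differential inequality on $[0,T]$ and using $\int_0^T t^{-1/2}\,dt=2\sqrt{T}$, the coefficients that in the original argument involved $T$ now involve $\sqrt{T}$; since $\sqrt{T}\geq T$ for $T\leq 1$, the same Caccioppoli-type $L^{2p}$ estimate survives. Feeding this into the Nash--Moser iteration over $p_i=((n+2)/n)^i$, exactly as in the original proof (and with the same Sobolev and volume hypotheses), yields the desired sup bound, provided $T\leq \tau_0$ is small enough that the extra $\sqrt{T}$-factor does not spoil the geometric convergence of the product of iteration constants; this determines $\tau_0=\tau_0(n,m,C_0)$.

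The only subtle point, and the structural feature that makes the proof go through, is the linearity of the bad contribution in $|\nabla^m Rm|$: had $\nabla^m Rm$ appeared quadratically (or worse) anywhere in the estimates, the resulting singularity $t^{-\alpha}$ with $\alpha\geq 1$ would fail to be integrable in time. Since in $I_4$ a single factor of $\nabla^j Rm$ ever occurs, and for the critical $j=m$ term it appears exactly once, the $L^1$-in-time integration closes the estimate and the rest is bookkeeping.
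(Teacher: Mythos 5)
Your decomposition of the problem is correct and matches the paper's: the singular bound enters only through the unique term of $I_4$ with $(i,j,k)=(0,m,0)$, the linearity of that term in $\nabla^m Rm$ is what keeps the time singularity at $t^{-1/2}$, and Young's inequality is the right tool to split $\chi^{2mp-m+1}|\nabla^m Ric|^{2p-1}$ into $\chi^{2mp}|\nabla^m Ric|^{2p}$ plus a lower-order piece. Your closing remark about linearity versus a hypothetical quadratic appearance is also a correct and useful observation.

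However, the passage ``the coefficients that in the original argument involved $T$ now involve $\sqrt T$; since $\sqrt T\geq T$ for $T\leq 1$, the same Caccioppoli-type $L^{2p}$ estimate survives'' misses the genuinely new step. After Young, the $t^{-1/2}$ factor sits in front of $\int\psi^2\chi^{2mp}|\nabla^m Ric|^{2p}(t)$, and
\[
\int_0^T t^{-1/2}\int\psi^2\chi^{2mp}|\nabla^m Ric|^{2p}(t)\,dt
\]
is \emph{not} a ``$T$ replaced by $\sqrt T$'' version of a coefficient already present; it can only be controlled by
\[
2\sqrt T\,\sup_{0<t\leq T}\int\psi^2\chi^{2mp}|\nabla^m Ric|^{2p}(t),
\]
a term involving the quantity that the Caccioppoli estimate is supposed to produce. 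This must be absorbed back into the left-hand side of the energy inequality (which carries exactly that supremum), and that absorption is what forces $C(n,m,C_0)\sqrt{T}<\tfrac12$ and hence $T\leq\tau_0(n,m,C_0)$. Your proposal instead attributes the smallness of $T$ to preserving the geometric convergence of the Moser iteration constants, which is not where the constraint arises; the iteration itself proceeds unchanged once the Caccioppoli estimate is re-established. The observation ``$\sqrt T\geq T$ for $T\leq 1$'' is also beside the point -- and if anything runs the wrong direction, since a larger coefficient is worse, not harmless. So the approach is the right one, but the proof as written does not actually close: you need to state the absorption step explicitly and identify it as the source of $\tau_0$.
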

\begin{proof}
In the proof of Lemma \ref{mean value inequality for derivatives of Ric}, the only place where we used the bound of $\chi^m |\nabla^m Rm|$ is in the estimate of $I_4$. Under the new assumption we have
\[
\begin{split}
I_4  \leq & C(n) \int \psi^2 \chi^{2mp+2} |Rm| |\nabla^m Ric|^{2p} \\
& + C(n) \sum_{i+j+k=m; \hspace{4pt} k\leq m-1} \int \psi^2 \chi^{2mp} |\nabla^i (\chi^2)| |\nabla^j Rm| |\nabla^k Ric| |\nabla^m Ric|^{2p-1} \\
 \leq & C(n,C_0) \int \psi^2 \chi^{2mp} |\nabla^m Ric|^{2p}  + C(n,m,C_0) C_1 t^{-\frac{1}{2}}\int \psi^2 \chi^{2mp-m}  |\nabla^m Ric|^{2p-1}.\\
\end{split}
\]
When integrated on $[0,T]$ we have
\[
\begin{split}
\int_0^T I_4  \leq &  C(n,C_0) \int_0^T \int \psi^2 \chi^{2mp} |\nabla^m Ric|^{2p}  \\
&+ C(n,m,C_0) C_1 \sqrt{T}\sup_{0<t \leq T}\int \psi^2 \chi^{2mp-m}  |\nabla^m Ric|^{2p-1}(t)\\
 \leq  & C(n,C_0) \int_0^T \int \psi^2 \chi^{2mp} |\nabla^m Ric|^{2p} + C(n,m,C_0, C_2)C_1^{2p} \sqrt{T} V_{r+\delta}.\\
 &+ C(n,m,C_0) \sqrt{T}\sup_{0<t \leq T}\int \psi^2 \chi^{2mp}  |\nabla^m Ric|^{2p}(t),
\end{split}
\]
where we used Young's inequality to get the last inequality.

If $C(n,m,C_0) \sqrt{T} < \frac{1}{2}$, then we can use the left hand side to absorb the last term and show that
\[
\begin{split}
& \frac{1}{2}\sup_{0 \leq t \leq T}\int \psi^2 \chi^{2mp} |\nabla^m Ric|^{2p} (t) + \frac{p}{2} \int_0^T \int \psi^2 \chi^{2mp} |\nabla ^m Ric|^{2p-2} |\nabla^{m+1} Ric|^2 \\
& + \frac{p(p-1)}{2} \int_0^T \int \psi^2 \chi^{2mp}|\nabla^m Ric|^{2p-4}|\nabla |\nabla^m Ric|^2|^2  \\
 \leq & C V_{r+\delta}C_1^{2p} + C \frac{p^4}{\delta^2} \int_0^T \int \psi \chi^{2mp} |\nabla^m Ric|^{2p}. \\
\end{split}
\]
Then the same arguments as before yields the lemma.
\end{proof}

Similar to Lemma \ref{lemma: L2 control of the derivatives of curvature}, we can control the $L^2$ norm of higher order covariant derivatives of the Ricci curvature tensor using lower order information.
\begin{lem}\label{L2 control of the derivatives of Ricci curvature}
Let $m\geq 1$, $R>0,T>0$. Suppose there are constant $C_0, C_1,$ such that
\[  \chi^{i} |\nabla^{i} Rm| (x,t) \leq C_0, \quad i=0,1,...,m-1;\]
\[ |\nabla^j \chi|_{g(t)}(x,t) \leq C_0, \quad j=1,2,...,m+1;\]
\[  \chi^{k} |\nabla^{k} Ric| (x,t) \leq C_1, \quad k=0,1,...,m-1;\]
\[Vol_{g(t)} B_{g(0)}(q,2R) \leq V;\]
for all  $(x,t)\in B_{g(0)}(q,2R) \times [0,T] $. Then
\[
\|\chi^m |\nabla^m Ric| \|_{L^2(B_{g(0)}(q,R) \times [0,T])} \leq C C_1 \sqrt{(1+\frac{T}{R^2} + T C_0)V},
\]
where $C$ depends on $n$ and $m$.
\end{lem}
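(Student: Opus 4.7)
The plan is to parallel the proof of Lemma \ref{lemma: L2 control of the derivatives of curvature}, replacing $Rm$ by $Ric$ throughout and using the evolution equation (\ref{evolution equation of the m^th covariant derivative of Ric}) for $\nabla^{m-1}Ric$ in place of the one for $\nabla^{m-1}Rm$. Choose a spatial cut-off $0\le \psi\le 1$ with $\psi\equiv 1$ on $B_{g(0)}(q,R)$, supported in $B_{g(0)}(q,2R)$, and $|\nabla \psi|_{g(0)}^2\le 4\psi/R^2$. Consider the energy
\[
E(t)=\int \psi^2\chi^{2(m-1)}|\nabla^{m-1}Ric|^2\,d\mu_{g(t)},
\]
and differentiate in $t$, using (\ref{evolution equation of the m^th covariant derivative of Ric}) together with $\partial_t g=-2\chi^2 Ric$ and $\partial_t d\mu_{g(t)}=-\chi^2 S\,d\mu_{g(t)}$.

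Integration by parts on the $\chi^2\Delta\nabla^{m-1}Ric$ contribution produces the principal good term $-\int \psi^2\chi^{2m}|\nabla^m Ric|^2$ together with boundary-type cross terms involving $|\nabla\psi|$ and $|\nabla\chi|$. The first sum in (\ref{evolution equation of the m^th covariant derivative of Ric}) at order $m-1$ contains exactly one occurrence of $\nabla^m Ric$ (the $k=m$ term, paired with $\chi\nabla\chi$); Cauchy--Schwarz absorbs a piece of this into a quarter of the good term and leaves a lower-order residual. Every remaining factor is a covariant derivative of $\chi$, $Rm$, or $Ric$ of order at most $m+1$, $m-1$, or $m-1$ respectively, hence bounded by $C_0$ or $C_1$ by hypothesis; the metric-variation pieces from $\partial_t|\cdot|^2$ and $\partial_t d\mu_{g(t)}$ contribute quantities of type $\chi^2 Ric \ast \nabla^{m-1}Ric \ast \nabla^{m-1}Ric$ and are similarly controlled by $C_1$.

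Integrating the resulting differential inequality on $[0,T]$ yields a bound of the schematic shape
\[
\tfrac14\int_0^T\!\!\int \psi^2\chi^{2m}|\nabla^m Ric|^2 \le E(0)+C(n,m)\!\int_0^T\!\!\int \psi\bigl(R^{-2}C_1^2+C_0C_1^2+C_1^2\bigr)d\mu_{g(t)}\,dt.
\]
The initial piece $E(0)\le C_1^2 V$ comes from $\chi^{m-1}|\nabla^{m-1}Ric|(\cdot,0)\le C_1$, and the volume hypothesis $\mathrm{Vol}_{g(t)}B_{g(0)}(q,2R)\le V$ handles the time integrals. Taking square roots produces exactly $C C_1\sqrt{(1+T/R^2+TC_0)V}$. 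The only bookkeeping subtlety, which I expect to be the main point to verify, is that every monomial in (\ref{evolution equation of the m^th covariant derivative of Ric}) carries exactly one factor that is a derivative of $Ric$, so that after pairing with $\nabla^{m-1}Ric$ in the $L^2$ inner product the resulting bound is quadratic in Ricci data and therefore linear in $C_1^2$ rather than in $C_1^3$; this follows from direct inspection of the structure of the equation.
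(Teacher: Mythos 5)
Your proposal is correct and follows essentially the same route as the paper: the same cut-off $\psi$, the same energy $\int\psi^2\chi^{2(m-1)}|\nabla^{m-1}Ric|^2$, differentiation in $t$ via (\ref{evolution equation of the m^th covariant derivative of Ric}), integration by parts on the $\chi^2\Delta\nabla^{m-1}Ric$ term to expose $-\int\psi^2\chi^{2m}|\nabla^m Ric|^2$, Cauchy absorption, and then bounding all residuals using the hypothesized $C_0$ and $C_1$ bounds before integrating over $[0,T]$. The one bookkeeping point worth flagging is in your treatment of the metric-variation contributions: the factor $\chi^2 Ric$ arising from $\partial_t g$ and $\partial_t d\mu$ should be estimated via $\chi^2|Ric|\le C(n)\chi|Rm|\le C(n)C_0$ (not $C_1$) so that the resulting term is $C_0C_1^2$, matching the $TC_0$ in the stated bound; controlling it by $C_1$ as you wrote would yield a $TC_1$ term and a cubic dependence on $C_1$, which is not what the lemma asserts.
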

\begin{proof}
Choose $C^1$ functions $0 \leq \psi \leq 1$, such that
\begin{equation*}
\psi(x) =\begin{cases}
1, \quad x\in B_{g(0)}(q,R); \\
0, \quad x\notin B_{g(0)}(q, 2R);
\end{cases}
\end{equation*}
and
\[ |\nabla \psi|^2_{g(0)} \leq \frac{4}{R^2} \psi\]
on its support. Using (\ref{evolution equation of the m^th covariant derivative of Ric}) and integration by parts we can calculate
\[
\begin{split}
&\frac{\partial}{\partial t} \int \psi^2 \chi^{2(m-1)} |\nabla^{m-1} Ric|^2  \\
=& \int \psi^2 \chi^{2(m-1)}\langle \nabla^{m-1} Ric, \chi^2 \Delta \nabla^{m-1} Ric\rangle \\
& +\sum_{i+j+k=m+1, \hspace{4pt} k\leq m}\int \psi^2 \chi^{2(m-1)}\langle \nabla^{m-1} Ric,  \nabla^i \chi \ast \nabla^j \chi \ast \nabla^k Ric  \rangle \\
 & + \sum_{i+j+k=m-1} \int \psi^2 \chi^{2(m-1)}\langle \nabla^{m-1} Ric,  \nabla^i(\chi^2) \ast \nabla^j Rm \ast \nabla^k Ric\rangle \\
\leq & - \int \psi^2 \chi^{2m} |\nabla^m Ric|^2 + \int |\nabla (\psi^2 \chi^{2m})| |\nabla^{m-1} Ric| |\nabla^m Ric| \\
        & + \int \psi^2 \chi^{2m-1} |\nabla \chi| |\nabla^{m-1} Ric| |\nabla^m Ric|  \\
        & + C(n,m) \sum_{i+j+k=m+1, \hspace{4pt} k\leq m-1}\int \psi^2 \chi^{2(m-1)}|\nabla^i \chi| | \nabla^j \chi|  |\nabla^k Ric | |\nabla^{m-1} Ric| \\
        & + C(n,m) \sum_{i+j+k=m-1} \int \psi^2 \chi^{2(m-1)}  |\nabla^i(\chi^2)| |\nabla^j Rm|  |\nabla^k Ric| |\nabla^{m-1} Ric|. \\
\end{split}
\]
By the Cauchy inequality
\[
\frac{\partial}{\partial t} \int \psi^2 \chi^{2(m-1)} |\nabla^{m-1} Ric|^2  \leq  - \frac{1}{4}\int \psi^2 \chi^{2m} |\nabla^m Ric|^2 +I,
\]
where
\[
\begin{split}
I =&  \frac{8}{R^2}\int \psi \chi^2 \chi^{2(m-1)} |\nabla^{m-1} Ric|^2 + 2\int \psi^2 | \nabla \chi|^2 \chi^{2(m-1)} |\nabla^{m-1} Ric|^2 \\
        & + \int \psi^2 \chi^{2m-2} |\nabla \chi|^2 |\nabla^{m-1} Ric|^2 \\
        & + C(n,m) \sum_{i+j+k=m+1, \hspace{4pt} k\leq m-1}\int \psi^2 \chi^{2(m-1)}|\nabla^i \chi| | \nabla^j \chi|  |\nabla^k Ric | |\nabla^{m-1} Ric| \\
        & + C(n,m) \sum_{i+j+k=m-1} \int \psi^2 \chi^{2(m-1)}  |\nabla^i(\chi^2)| |\nabla^j Rm|  |\nabla^k Ric| |\nabla^{m-1} Ric|. \\
\end{split}
\]
Integrate on $[0,T]$, we get
\[
\begin{split}
\frac{1}{4} \int_0^T \int \psi^2 \chi^{2m} |\nabla^m Ric|^2 \leq & \int \psi^2 \chi^{2(m-1)} |\nabla^{m-1} Ric|^2 (0) + \int_0^T I dt \\
\leq & V C_1^2 + VT C(n,m)(\frac{1}{R^2}+C_0) C_1^2.
\end{split}
\]
\end{proof}

\subsection{Lifespan estimate for the local Ricci flow}\label{subsection: lifespan estimate for the local ricci flow}
\begin{prop}\label{theorem: lifespan estimate - local ricci flow}
Let $(M,g)$ be an $n$-dimensional complete Riemannian manifold satisfying Assumption \ref{initial good cover assumption}, and let $g(t)$ be a solution of the local Ricci flow $\frac{\partial}{\partial t}g=-2\chi^2 Ric$ starting from $(M,g)$, where the smooth function $0 \leq \chi \leq 1$ is compactly supported with $|\nabla_{g(0)}^m \chi|(x)\leq C(n,I)K_i^{1+m/2}$ for $x \in \Bhat_i$, $m=1,2,3$, $i=1,2,...,N$. Then the lifespan of $g(t)$ is greater than a constant $T_0$ depending only on $n, A, \bar{K}$ and $I$. More precisely, $T_0$ bounds from below the following generalized doubling time in Definition \ref{definition: generalized doubling time}.
\end{prop}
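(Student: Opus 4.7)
The plan is to adapt the three-step strategy from the proof of Theorem \ref{theorem: improved doubling time estimate} to the local Ricci flow, working locally on each $\Bhat_i$ with the help of the derivative estimates from Section \ref{section: derivatives estimates for the local ricci flow} and then patching together the local bounds. Concretely, I would let $T_d$ be the generalized doubling time, that is, the first time at which $\sup_{\Bhat_i}|Rm|_{g(t)} = 2(1+\Gamma)K_i$ for some $i$, and work on $[0,T_d)$. On this interval the metrics $g(t)$ are uniformly equivalent to $g(0)$ on each $\Bhat_i$ (with constants depending only on $n$ and the curvature bound), so localized Shi- and Sobolev-type inputs are available.

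First (Step 1), I would obtain bounds on $\chi |\nabla Rm|$ and $\chi^2|\nabla^2 Rm|$ on each $\Bhat_i$: at $t=0$ these come from Lemma \ref{lemma: control nabla Rm by Rm and nabla nabla Ric} combined with Assumption \ref{initial good cover assumption}(c), and for $t>0$ from the Shi-type estimates for the local Ricci flow, Lemma \ref{lemma: Shi's estimate for the local ricci flow} and Lemma \ref{lemma: Modified Shi's estimate for the local Ricci flow}. The outcome is the expected scale-invariant bounds $\chi^m|\nabla^m Rm| \leq C(n)K_i^{1+m/2}$ on (a shrunk version of) $\Bhat_i$, where the hypothesis $|\nabla_{g(0)}^m\chi| \leq C(n,I)K_i^{1+m/2}$ enters in controlling the cut-off terms in \eqref{evolution equation of the m^th covariant derivative of Rm}.

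Second (Step 2), I would control $|Ric|, \chi|\nabla Ric|$ and $\chi^2|\nabla^2 Ric|$ by building a global barrier. Equation \eqref{evolution equation of the m^th covariant derivative of Ric} with $m=0,1,2$ yields differential inequalities of the form $\partial_t|\nabla^m Ric| \leq \chi^2\Delta|\nabla^m Ric| + C K_i |\nabla^m Ric| + (\text{lower-order terms})$ on $\Bhat_i$, where the lower-order terms are controlled inductively by the previous estimates and by the $|\nabla^j\chi|$ bounds. I would then define a smooth barrier $\bar{P}(x)$ that on $\Bhat_i$ is comparable to $P_i=(n-1)K_ie^{A(\bar{K}-K_i)}$, using Assumption \ref{initial good cover assumption}(d) to handle the transition across overlapping balls (this is where a factor like $e^{A\Gamma}$ enters), and apply the maximum principle to get $|\nabla^m Ric|(x,t) \leq C\,\bar{P}(x)\,K_{i(x)}^{m/2}\exp(C K_{i(x)} t)$ for $m=0,1,2$. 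The integrable singularity in $\chi^2|\nabla^2 Rm|$ near $t=0$ is handled as in Step 2 of the proof of Theorem \ref{theorem: improved doubling time estimate}, splitting $[0,T_d]$ into $[0,1/K_i]$ and $[1/K_i,T_d]$.

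Third (Step 3), at each point I would apply the pointwise ODE obtained from the evolution equation for $R_{ijk}^l$ under the local Ricci flow, which is the analogue of \eqref{equation: time derivative of Rm in terms of the second derivative of Ric} with each $R$ replaced by $\chi^2 R$; after expanding the product rule this yields
\begin{equation*}
\partial_t |Rm| \;\leq\; C\chi^2|Rm||Ric| + C\chi^2|\nabla^2 Ric| + C\chi|\nabla\chi||\nabla Ric| + C(|\nabla\chi|^2+\chi|\nabla^2\chi|)|Ric|.
\end{equation*}
Integrating this ODE in $t$ and plugging in the bounds from Step 2, the decisive factor $e^{-AK_i}$ hidden in $P_i$ cancels the growth $e^{CtK_i}$ provided $t$ is sufficiently small; together with Lemma \ref{lemma: minimum of a continuous function} this forces $|Rm|_{g(t)}(x) < 2(1+\Gamma)K_{i(x)}$ for $t \leq T_0(n,I,\Gamma,A,\bar{K})$, contradicting the definition of $T_d$ unless $T_d \geq T_0$.

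The main obstacle I anticipate is the bookkeeping in Step 2: constructing a globally defined barrier $\bar{P}$ that encodes the local bounds $P_i$ while remaining smooth across overlaps requires careful use of Assumption \ref{initial good cover assumption}(d)--(e) and the bounded intersection number $I$, and the cut-off function $\chi$ forces several extra terms (involving $|\nabla\chi|^2$, $\chi|\nabla^2\chi|$, $|\nabla^3\chi|$) into the evolution equations that have to be absorbed into the barrier without destroying its smallness factor $e^{-AK_i}$.
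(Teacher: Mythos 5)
Your Steps 1 and 3 match the paper's proof, and your Step 2 is correct for $m=0,1$: the paper (in Lemma \ref{lemma: estimates of Ricci curvature and its covariant derivatives - local ricci flow}, Step 1) controls $|Ric|$ and $|\nabla Ric|$ exactly by the maximum principle with glued barrier functions $\Phi=\sum_i \phi_i P_i^2 e^{\Lambda_0 K_i t}$ and $\Phi_1=\sum_i\phi_i K_i P_i^2 e^{\Lambda_1 K_i t}$, using (d)--(e) to control the transition constants. However, your proposal has a genuine gap at $m=2$. When you take two covariant derivatives of the $m=0$ Ricci evolution, the term $\chi\nabla^2\chi\ast Ric$ generates $\chi\nabla^4\chi\ast Ric$ in the evolution equation \eqref{evolution equation of the m^th covariant derivative of Ric} for $\nabla^2 Ric$ (the first sum with $i+j+k=4$, $k\le 3$ allows $i=4$, $j=k=0$). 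A pointwise maximum-principle argument for $\chi^2|\nabla^2 Ric|$ would therefore require a pointwise bound on $\nabla^4\chi$, but the hypotheses only provide $|\nabla^m_{g(0)}\chi|\le C K_i^{m/2}$ for $m=1,2,3$, and Lemma \ref{lemma: derivative estimats of chi} only propagates those three orders in time. This is precisely why the paper, after handling $m=0,1$ by the maximum principle, switches to a Nash--Moser iteration for $\chi^2|\nabla^2 Ric|$ (Lemmas \ref{L2 control of the derivatives of Ricci curvature}, \ref{mean value inequality for derivatives of Ric}, \ref{lemma: mean value inequality for derivative of ricci for short time}), where the $\nabla^{m+2}\chi=\nabla^4\chi$ factor in the $I_2$ term is eliminated by integrating by parts. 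The $1/\sqrt{t}$ singularity of $\chi^2|\nabla^2 Rm|$ that you flag is a separate issue (addressed in Lemma \ref{lemma: mean value inequality for derivative of ricci for short time} by choosing $T$ small enough that the absorption works), not the main obstruction.

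A secondary point: you paraphrase the generalized doubling time as ``the first time $\sup_{\Bhat_i}|Rm|=2(1+\Gamma)K_i$,'' but Definition \ref{definition: generalized doubling time} also requires the three integral bounds on $|Ric|$, $\chi|\nabla Ric|$, $\chi^2|\nabla^2 Ric|$. These integral conditions are not cosmetic: they are what drive the metric equivalence \eqref{equivalence of metric} and the time-propagation of the $\chi$-derivative bounds in Lemma \ref{lemma: derivative estimats of chi} via Gronwall, which are prerequisites for invoking the localized Shi-type estimates in your Step 1. Defining the continuation time only through the $|Rm|$ bound would leave that bootstrap ungrounded, since the local Ricci flow's metric change is controlled by $\chi^2 Ric$ and you need the sharper barrier-level control on $|Ric|$, not just the crude $|Ric|\lesssim |Rm|\lesssim K_i$ bound, to reach a lifespan of order $A$ rather than of order $1/K_i$.
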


\begin{defn}\label{definition: generalized doubling time}
We define the generalized doubling time $T$ to be the largest number such that
\[ \sup_{\Bhat_i \times [0,T]} |Rm| \leq 2(1+\Gamma)K_i,\]
\[ \int_0^T \sup_{\Bhat_i} |Ric| \leq \frac{1}{2} \ln 2,\]
\[ \int_0^T \sup_{\Bhat_i } \chi |\nabla Ric| \leq \sqrt{K_i},\]
\[ \int_0^T \sup_{\Bhat_i } \chi^2 |\nabla^2 Ric| \leq K_i,\]
for $i=1,2,...,N$.
\end{defn}
By Lemma \ref{lemma: control nabla Rm by Rm and nabla nabla Ric} we have $|\nabla Rm|_{g(0)}(x)\leq C(n)K_i^{3/2}$ for $x\in B_{g(0)}(x_i, 15r_i) \subset \Bhat_i$. Hence by Lemma \ref{lemma: cut-off function on each ball} we can construct a cut-off function $\phi_i$ on each $\Bhat_i$, which equals $1$ on $B_i$, and has three bounded derivatives:
\[|\nabla^m \phi_i|_{g(0)} \leq C(n) K_i^\frac{m}{2}, \quad m=1,2,3.\]
\begin{lem}\label{lemma: derivative estimats of chi}
For any $x \in \Bhat_i$ and $ t\in [0,T]$, we have
\[|\nabla^m \chi|(x,t) \leq C(n, I) K_i^\frac{m}{2}, \quad m=1,2,3,\]
and
\[| \nabla^2 \phi_i|(x,t) \leq C(n) K_i .\]
\end{lem}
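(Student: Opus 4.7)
The plan is to apply the evolution inequality (\ref{equation: inequality satisfied by the derivatives of chi}) to $\chi$ (and its analog to $\phi_i$) and bootstrap from $m=1$ up to $m=3$, closing each step with a Grönwall argument powered by the three time-integral bounds built into Definition \ref{definition: generalized doubling time}. Throughout I use $0\leq \chi\leq 1$ and the initial bounds $|\nabla^m_{g(0)}\chi|\leq C(n,I)K_i^{m/2}$ ($m=1,2,3$) on $\hat B_i$, together with $|\nabla_{g(0)}\phi_i|\leq C(n)K_i^{1/2}$ and $|\nabla^2_{g(0)}\phi_i|\leq C(n)K_i$ from Lemma \ref{lemma: cut-off function on each ball}.

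Step $m=1$. Equation (\ref{equation: inequality satisfied by the derivatives of chi}) reads $\partial_t|\nabla\chi|\leq C(n)\chi^2|Ric|\,|\nabla\chi|\leq C(n)|Ric|\,|\nabla\chi|$, so Grönwall and the bound $\int_0^T\sup_{\hat B_i}|Ric|\leq \tfrac12\ln 2$ yield
\[
|\nabla\chi|(x,t)\leq \sqrt{2}\,|\nabla\chi|(x,0)\leq C(n,I)K_i^{1/2}.
\]
The same derivation, starting from the $\phi_i$-initial bound, gives $|\nabla\phi_i|(x,t)\leq C(n)K_i^{1/2}$.

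Step $m=2$. From (\ref{equation: inequality satisfied by the derivatives of chi}) and the product rule $|\nabla(\chi^2Ric)|\leq 2\chi|\nabla\chi|\,|Ric|+\chi^2|\nabla Ric|$, we get
\[
\partial_t|\nabla^2\chi|\leq C(n)\bigl(|Ric|\,|\nabla^2\chi|+|\nabla\chi|^2|Ric|+|\nabla\chi|\,\chi|\nabla Ric|\bigr).
\]
Inserting $|\nabla\chi|\leq C(n,I)K_i^{1/2}$ from Step $m=1$, integrating in $t$, and using $\int_0^T\sup|Ric|\leq\tfrac12\ln 2$ and $\int_0^T\sup\chi|\nabla Ric|\leq \sqrt{K_i}$ produces a linear integral inequality in $|\nabla^2\chi|$ with forcing bounded by $C(n,I)K_i$. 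A final Grönwall step gives $|\nabla^2\chi|(x,t)\leq C(n,I)K_i$. The identical computation, with the $\phi_i$-initial bound $C(n)K_i$, yields $|\nabla^2\phi_i|(x,t)\leq C(n)K_i$ (the constant being independent of $I$ because Step $m=1$ for $\phi_i$ did not invoke $I$).

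Step $m=3$. Writing out $|\nabla^2(\chi^2 Ric)|\leq C(\chi|\nabla^2\chi||Ric|+|\nabla\chi|^2|Ric|+\chi|\nabla\chi||\nabla Ric|+\chi^2|\nabla^2 Ric|)$ and substituting the bounds on $|\nabla\chi|,|\nabla^2\chi|$ already established, equation (\ref{equation: inequality satisfied by the derivatives of chi}) at $m=3$ becomes a linear differential inequality for $|\nabla^3\chi|$ whose forcing terms integrate, against the three bounds in Definition \ref{definition: generalized doubling time}, to $C(n,I)K_i^{3/2}$. Grönwall then closes the argument and gives $|\nabla^3\chi|(x,t)\leq C(n,I)K_i^{3/2}$.

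The main obstacle is not conceptual but organizational: at each level $m$ one must expand every $|\nabla^k(\chi^2 Ric)|$ via the product rule, identify which time-integral from Definition \ref{definition: generalized doubling time} absorbs each term, and verify that the power of $K_i$ pulled out matches $K_i^{m/2}$ exactly. The bookkeeping is delicate but purely algebraic; the fact that the three integrals in the doubling-time definition are normalized to $\tfrac12\ln 2$, $\sqrt{K_i}$, and $K_i$ respectively is precisely what makes every term land with the correct scaling.
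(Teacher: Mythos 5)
Your proposal is correct and follows essentially the same route as the paper: differentiate using inequality (\ref{equation: inequality satisfied by the derivatives of chi}), bootstrap from $m=1$ to $m=3$, and close each step via Gr\"onwall using the three time-integral bounds in Definition \ref{definition: generalized doubling time}, exactly as the paper does (the paper simply lists the three differential inequalities and invokes Gr\"onwall in one stroke, leaving the bootstrapping implicit). The bookkeeping you carry out to check that each forcing term lands at the correct power of $K_i$ matches the paper's intent.
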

\begin{proof}
For any $x \in \Bhat_i$, by the assumption of Proposition \ref{theorem: lifespan estimate - local ricci flow},
\[|\nabla^m \chi|(x,0) \leq C(n,I) K_i^\frac{m}{2}, \quad m=1,2,3.\]
From (\ref{equation: inequality satisfied by the derivatives of chi}) we get
\[\frac{\partial}{\partial t} |\nabla \chi| \leq C(n) \chi^2 |Ric| |\nabla \chi| ,\]
\[\frac{\partial}{\partial t} |\nabla^2 \chi| \leq C(n) \chi^2 |\nabla Ric| | \nabla \chi| +  C(n) \chi   |Ric| | \nabla \chi|^2 + \chi^2 |Ric| |\nabla^2 \chi|,\]
\[\begin{split}
\frac{\partial }{\partial t} |\nabla^3 \chi | \leq & C(n) \left(\chi^2 |\nabla^2 Ric| | \nabla \chi|  + |Ric| | \nabla \chi|^3 + \chi^2 |\nabla Ric| | \nabla^2 \chi | \right)\\
& + C(n) \left( \chi |\nabla Ric| | \nabla \chi |^2 + \chi |Ric| |\nabla \chi | | \nabla^2 \chi |+ \chi^2 |Ric| |\nabla^3 \chi| \right). \\
\end{split}\]
Then the lemma follows from Gronwall's inequality and the integral bounds in Definition \ref{definition: generalized doubling time}.

$|\nabla^2 \phi_i|$ is estimated similarly.
\end{proof}

By Definition \ref{definition: generalized doubling time}, we have uniformly equivalent metrics on the time interval $[0,T]$:
\begin{equation}\label{equivalence of metric}
\frac{1}{2}g(x,0) \leq g(x,t) \leq 2g(x,0), \quad x \in M, \hspace{2pt} t\in [0,T].
\end{equation}

Now we can apply Lemma \ref{lemma: Shi's estimate for the local ricci flow} and Lemma \ref{lemma: Modified Shi's estimate for the local Ricci flow}, which play the roles of Shi's estimate and modified Shi's estimate for the Ricci flow, to show the following derivative estimates for $Rm$ on each $B_i$.
\begin{lem}\label{lemma: estimate of Rm and its covariant derivatives under local ricci flow}
There is a constant $C_0$ depending only on $n, \Gamma$ and $I$, such that

(i) $\chi|\nabla Rm|(x,t) \leq C_0K_i^{3/2}$, for all $(x,t)\in B_i \times [0,T]$;

(ii) $\chi^2|\nabla^2 Rm|(x,t) \leq \frac{C_0K_i^{3/2}}{\sqrt{t}}$ for $(x,t)\in B_i \times [0,\frac{1}{K_i}]$, and $\chi^2|\nabla^2 Rm|(x,t)\leq C_0 K_i^{2}$ for $(x,t)\in B_i \times[\frac{1}{K_i}, T]$.
\end{lem}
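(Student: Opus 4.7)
The proof applies the derivative estimates of Section \ref{section: derivatives estimates for the local ricci flow} after first collecting the hypotheses that hold on every $\Bhat_i \times [0,T]$: the uniform curvature bound $|Rm| \leq 2(1+\Gamma)K_i$ from Definition \ref{definition: generalized doubling time}; the cut-off derivative bounds $|\nabla^m \chi| \leq C(n,I) K_i^{m/2}$ for $m=1,2,3$ from Lemma \ref{lemma: derivative estimats of chi}; the initial gradient bound $|\nabla Rm|_{g(0)} \leq C(n) K_i^{3/2}$ (produced just before the statement by Lemma \ref{lemma: control nabla Rm by Rm and nabla nabla Ric} applied on unit-scale balls after rescaling by $K_i$); and the metric equivalence \eqref{equivalence of metric}. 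Since Assumption \ref{initial good cover assumption}(b) guarantees $r_i \geq \bar r/\sqrt{K_i}$, the $g(0)$-ball $B_{g(0)}(x, \bar r/\sqrt{K_i})$ lies inside $\Bhat_i$ for every $x \in B_i$, which is the geometric scale at which the rescaled Shi's estimates are unit-size.

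For part (i), I apply Lemma \ref{lemma: Modified Shi's estimate for the local Ricci flow} with $m=1$ on the cylinders $B_{g(0)}(x, \bar r/\sqrt{K_i}) \times [0, \alpha/K_i]$ for each $x\in B_i$. All hypotheses are supplied by the data listed above; in particular the initial-data requirement $\chi |\nabla Rm|_{g(0)} \leq C_1 K_i^{3/2}$ is exactly the conclusion of Lemma \ref{lemma: control nabla Rm by Rm and nabla nabla Ric}, and the conclusion yields $\chi |\nabla Rm|(x,t) \leq C_0 K_i^{3/2}$ on $B_i \times [0, \alpha/K_i]$. For $t \in [\alpha/K_i, T]$, I iterate the same estimate on successive slabs of length $\alpha/K_i$, using the $L^\infty$-bound at the end of each slab as the initial datum for the next. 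Since each iteration preserves the form of the constant $C_0$ and the total number of iterations is at most $\lceil K_i T/\alpha \rceil$, which is uniformly bounded in terms of $n, I, \Gamma, A, \bar K$, the final constant remains under control.

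For part (ii), the first-derivative estimate just obtained supplies the hypothesis $\chi |\nabla Rm| \leq C_0 K_i^{3/2}$ required to apply the $m=2$ Shi's estimates. In the large-time regime $t \in [1/K_i, T]$ I invoke Lemma \ref{lemma: Shi's estimate for the local ricci flow} with $m=2$ on the cylinder $B_{g(0)}(x, \bar r/\sqrt{K_i}) \times [t - 1/K_i, t]$, whose upper time-slice bound is of order $K_i^2$, as claimed. In the short-time regime $t \in (0, 1/K_i]$ the correct route is to treat $\nabla Rm$ itself as the primary unknown in equation \eqref{evolution equation of the m^th covariant derivative of Rm}: the uniform bound on $\chi |\nabla Rm|$ produced in (i) plays the role of an $L^\infty$ initial datum for $\nabla Rm$, and applying Lemma \ref{lemma: Modified Shi's estimate for the local Ricci flow} at the level of $\nabla Rm$ (together with the standard $t^{-1/2}$ gain for the gradient of a bounded solution of a uniformly parabolic equation in the diffusion region where $\chi > 0$) yields the decay $\chi^2 |\nabla^2 Rm|(x,t) \leq C_0 K_i^{3/2}/\sqrt{t}$.

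The main obstacle is the short-time bound in (ii): a direct Moser iteration on $\chi^2 |\nabla^2 Rm|$ without an effective initial datum produces only a decay of order $t^{-(n+2)/4}$, which is weaker than the claimed $t^{-1/2}$. Promoting this to the sharp $t^{-1/2}$ rate requires that the bounded quantity fed into the mean-value inequality be $\chi |\nabla Rm|$ rather than $\chi^2 |\nabla^2 Rm|$, so that the $L^\infty$ control from (i) is converted by one step of parabolic gradient regularity — rather than by a full iteration from $L^2$ — into the required pointwise bound on the next derivative. Once this observation is in place, the remaining book-keeping (coordinating the rescaling, tracking the dependence of constants on $n, I, \Gamma$, and reconciling $g(0)$- and $g(t)$-balls via \eqref{equivalence of metric}) is routine.
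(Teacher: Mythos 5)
The core strategy (Modified Shi for short time, then some form of Shi for later times, with rescaling on $\bar r/\sqrt{K_i}$-balls and the cut-off bounds from Lemma~\ref{lemma: derivative estimats of chi}) is the right framework, but the way you handle the interval $[1/K_i,T]$ in part (i) does not work, and this is the crux of the lemma.

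You propose iterating Lemma~\ref{lemma: Modified Shi's estimate for the local Ricci flow} on successive time slabs of length $\alpha/K_i$, feeding the $L^\infty$ bound at the end of each slab into the next as the initial datum $C_1$. There are two problems. First, the output constant in Lemma~\ref{lemma: Modified Shi's estimate for the local Ricci flow} genuinely depends on the initial-data constant $C_1$: tracing its proof, the $L^\infty$ bound is of the form $\Lambda\cdot\max\{\,\text{($L^2$ control)},\,C_1\,\}$ with a Moser constant $\Lambda>1$. Each iteration therefore multiplies the carried constant by $\Lambda$, so after $N$ slabs the bound is of order $\Lambda^N$. Second, and more decisively, the number of slabs $N=\lceil K_i T/\alpha\rceil$ is \emph{not} uniformly bounded in terms of $n,I,\Gamma,A,\bar K$ alone. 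Both $K_i$ (unbounded by hypothesis) and $T$ (the generalized doubling time, which the paper is trying to show is as large as a universal $T_0$ independent of $K_i$) can make $K_iT$ arbitrarily large; indeed the whole point of the result is precisely the regime $K_iT\gg 1$. So the iterated constant is $\Lambda^{O(K_iT)}$, which depends on $K_i$, breaking the conclusion.

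The paper avoids this by switching to the \emph{unmodified} estimate, Lemma~\ref{lemma: Shi's estimate for the local ricci flow}, for $t\in[1/K_i,T]$. For each such $t$ one applies it on the sliding window $B_{g(\tau)}(x_i,8r_i)\times[\tau,\tau+1/K_i]$ with $\tau=t-1/K_i$ (using the metric equivalence~\eqref{equivalence of metric} to relate $g(\tau)$- and $g(0)$-balls). This estimate requires no initial datum for $\nabla Rm$; it needs only the uniform bounds $|Rm|\le 2(1+\Gamma)K_i$ from Definition~\ref{definition: generalized doubling time} and the cut-off derivative bounds from Lemma~\ref{lemma: derivative estimats of chi}, both of which hold uniformly on $[0,T]$. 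The price is a $t^{-1/2}$-type penalty inside the window, but at the end of the window (elapsed time $1/K_i$) it yields $\chi|\nabla Rm|\le CK_i^{3/2}$ with $C=C(n,I,\Gamma)$, and there is no accumulation because nothing is carried from one $t$ to the next. Modified Shi is then used only once, on $[0,1/K_i]$, exactly where the initial bound $|\nabla Rm|_{g(0)}\le C(n)K_i^{3/2}$ from Lemma~\ref{lemma: control nabla Rm by Rm and nabla nabla Ric} is available.

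For part (ii) your diagnosis of the obstacle is correct: a direct Moser iteration with a temporal cut-off produces $t^{-(n+2)/4}$, not $t^{-1/2}$. But the fix you suggest---``applying Lemma~\ref{lemma: Modified Shi's estimate for the local Ricci flow} at the level of $\nabla Rm$'' together with a generic parabolic $t^{-1/2}$ gradient gain---is not something that lemma delivers, and the local Ricci flow is degenerate where $\chi$ vanishes, so the cited ``standard'' parabolic fact does not apply off the shelf. The paper's own treatment of (ii) is also terse (``similarly''), so you are in good company in finding this step delicate, but as written your argument for the $t^{-1/2}$ rate is a heuristic, not a proof. In contrast, the uniform bound $\chi^2|\nabla^2Rm|\le C_0K_i^2$ on $[1/K_i,T]$ follows cleanly from the same sliding-window use of Lemma~\ref{lemma: Shi's estimate for the local ricci flow} with $m=2$, now that part (i) supplies the required hypothesis on $\chi|\nabla Rm|$.
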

\begin{proof}
By Lemma \ref{lemma: control nabla Rm by Rm and nabla nabla Ric}, $|\nabla Rm|$ is initially bounded. Then by Lemma \ref{lemma: Modified Shi's estimate for the local Ricci flow} and \ref{lemma: derivative estimats of chi} we have
\[ \chi|\nabla Rm| (x,t) \leq C(n, I, \Gamma) K_i^{3/2}\]
for $(x,t) \in B_{g(0)}(x_i,4r_i) \times [0, \frac{1}{K_i}]$.

By the equivalence of metrics (\ref{equivalence of metric}), $B_{g(t)}(x_i, 8r_i) \subset \Bhat_i$ for all $t\in [0,T]$.
Therefore $|Rm| \leq 2(1+\Gamma)K_i$ on $B_{g(\tau)}(x_i, 8r_i)$ for all $\tau \in [0,T]$. Hence by Lemma \ref{lemma: Shi's estimate for the local ricci flow}, we have
\[\chi |\nabla Rm|(x,t) \leq \frac{C(n, I, \Gamma)K_i}{(t-\tau)^{1/2}}, \]
for $(x,t) \in B_  {g(\tau)}(x_i, 6 r_i) \times (\tau, \tau + \frac{1}{K_i}]$, where $\tau \in [0, T-\frac{1}{K_i}]$. In particular,
\[\chi |\nabla Rm|(x,t) \leq C(n, I, \Gamma)K_i^{3/2},\]
for all $t \in [\frac{1}{K_i}, T]$ and $x\in B_{g(0)}(x_i,3r_i)$. Now we have verified (i). We can prove (ii) similarly using Lemma \ref{lemma: Shi's estimate for the local ricci flow}.
\end{proof}
Then we can prove the following estimates of the Ricci curvature and its first two covariant derivatives.
\begin{lem}\label{lemma: estimates of Ricci curvature and its covariant derivatives - local ricci flow}
Suppose the initial manifold $(M,g)$ satisfies Assumption \ref{initial good cover assumption}, then there are constants $C$ and $\Lambda$ depending on $n, \Gamma, I$ and $A$, such that if $T \leq \frac{A}{ \Lambda}$, then
\[\chi^m|\nabla^m Ric|(x,t) \leq CK_i^\frac{m}{2} P_ie^{\Lambda K_i t}, \quad m=0,1,2,\]
for $(x,t) \in B_i \times [0, T]$, $i=1,2,...,N$.
\end{lem}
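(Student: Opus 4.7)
The plan is to prove the three estimates ($m=0,1,2$) inductively, mirroring Step 2 of the proof of Theorem \ref{theorem: improved doubling time estimate} but adapted to the local Ricci flow and the covering structure of Assumption \ref{initial good cover assumption}. At each stage we use the evolution equation (\ref{evolution equation of the m^th covariant derivative of Ric}) together with a maximum principle, controlling the $\nabla^j\chi$-terms by Lemma \ref{lemma: derivative estimats of chi}, the $\nabla^j Rm$-terms by Lemma \ref{lemma: estimate of Rm and its covariant derivatives under local ricci flow}, and the $|\nabla^j Ric|$-terms by the previous inductive step. The initial values are controlled by Assumption \ref{initial good cover assumption}(c), and the coupling between neighbouring balls is handled by Assumption \ref{initial good cover assumption}(d), which yields $P_j \le e^{A\Gamma}P_i$ whenever $\Bhat_i\cap\Bhat_j\ne\emptyset$.

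For $m=0$, equation (\ref{evolution equation of the m^th covariant derivative of Ric}) gives
\[
\frac{\partial}{\partial t}|Ric|\le \chi^2\Delta|Ric|+C(n)\chi^2|Rm||Ric|,
\]
and on $\Bhat_i$ we have $\chi^2|Rm|\le 2(1+\Gamma)K_i$. A localized maximum principle using the cut-off $\phi_i$ from Lemma \ref{lemma: cut-off function on each ball}, together with a barrier $f(x,t)$ that equals $CP_ie^{\Lambda_0 K_it}$ on $B_i$ and is glued across neighbouring balls by $P_j\le e^{A\Gamma}P_i$, gives $|Ric|(x,t)\le CP_ie^{\Lambda_0 K_it}$ on each $B_i$. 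For $m=1$, multiplying the evolution equation of $\nabla Ric$ by $\chi$ and using Lemmas \ref{lemma: derivative estimats of chi}, \ref{lemma: estimate of Rm and its covariant derivatives under local ricci flow}(i) and the $m=0$ estimate produces
\[
\frac{\partial}{\partial t}(\chi|\nabla Ric|)\le\chi^2\Delta(\chi|\nabla Ric|)+C K_i(\chi|\nabla Ric|)+CK_i^{3/2}P_ie^{\Lambda_0 K_it},
\]
to which the same barrier/maximum-principle argument applies, yielding $\chi|\nabla Ric|\le CK_i^{1/2}P_ie^{\Lambda_1 K_it}$.

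For $m=2$ the computation is identical in spirit, but the resulting inequality contains a contribution of the form $\chi^2|\nabla^2 Rm|\cdot|Ric|$. By Lemma \ref{lemma: estimate of Rm and its covariant derivatives under local ricci flow}(ii), $\chi^2|\nabla^2 Rm|$ is only bounded by $CK_i^{3/2}/\sqrt{t}$ for $t\in(0,1/K_i]$ and by $CK_i^2$ afterwards. This integrable singularity is handled exactly as in the proof of Theorem \ref{theorem: improved doubling time estimate}, by splitting $[0,T]$ at $1/K_i$ and using $\int_0^{1/K_i}K_i^{5/2}/\sqrt{t}\,dt\le 2K_i^2$ to bound the forcing term before $1/K_i$; the maximum principle is then applied separately on $[0,1/K_i]$ and $[1/K_i,T]$, the second with initial data provided by the first. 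This yields $\chi^2|\nabla^2 Ric|\le CK_iP_ie^{\Lambda_2 K_it}$. Taking $\Lambda=\max\{\Lambda_0,\Lambda_1,\Lambda_2\}$ finishes the lemma, and the hypothesis $T\le A/\Lambda$ is exactly what is needed so that $e^{\Lambda K_iT}\le e^{AK_i}$, which together with the $e^{-AK_i}$ built into $P_i$ keeps the estimate bounded in terms of $\bar K$.

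The main obstacle is setting up the maximum-principle argument on the noncompact overlapping cover: one cannot simply apply the global maximum principle ball by ball because the forcing in the evolution inequality for $|\nabla^m Ric|$ on $B_i$ a priori involves $|\nabla^{m'}Ric|$ ($m'<m$) and $|\nabla^jRm|$ on the larger ball $\Bhat_i$, and the supremum of these quantities is attained at possibly different points lying in neighbouring balls. This is precisely where Assumption \ref{initial good cover assumption}(d),(e) are essential: (d) controls how $P_j$ compares to $P_i$ on overlaps (introducing the $e^{A\Gamma}$ factor that eventually enters $C,\Lambda$), and (e) bounds the number of terms that have to be summed when patching local barriers into a global supersolution. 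The second obstacle, as noted above, is the short-time $1/\sqrt t$ singularity of $\chi^2|\nabla^2 Rm|$, which forces the two-interval splitting at $t=1/K_i$.
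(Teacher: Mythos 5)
Your plan for $m=0,1$ reproduces the paper's Step~1 essentially verbatim: derive a differential inequality for $|Ric|^2$ (resp.\ $|\nabla Ric|^2$) on each $B_i$, glue a barrier supersolution $\Phi=\sum_i\phi_iP_i^2e^{\Lambda_0K_it}$ (resp.\ $\Phi_1=\sum_i\phi_iK_iP_i^2e^{\Lambda_1K_it}$) from the cut-offs $\phi_i$, and apply the maximum principle, with Assumption \ref{initial good cover assumption}(d),(e) controlling the gluing exactly as you describe. (One small correction: you need to work with $|Ric|^2$ and $|\nabla Ric|^2$ rather than the unsquared norms, so that the first-order forcing $\chi\nabla\chi*\nabla^{m+1}Ric$ can be absorbed by the good term $-2\chi^2|\nabla^{m+1}Ric|^2$ coming from the $\chi^2\Delta$ operator.)

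For $m=2$, however, your proposal has a genuine gap, and it is not the $1/\sqrt t$ singularity you flag. The evolution equation (\ref{evolution equation of the m^th covariant derivative of Ric}) applied with $m=2$ has, in the first sum ($i+j+k=m+2=4$, $k\le m+1=3$), the contribution from $(i,j,k)=(4,0,0)$, i.e.\ $\nabla^{4}\chi*\chi*Ric$. The cut-off $\chi$ supplied by Lemma \ref{lemma: construction of chi}, built via Lemmas \ref{lemma: distance like function with higher derivative bounds} and \ref{lemma: cut-off function on each ball}, comes with bounds on $\nabla^j\chi$ only for $j=1,2,3$, and Lemma \ref{lemma: derivative estimats of chi} propagates exactly these three bounds along the flow. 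A pointwise barrier/maximum-principle argument therefore has no control over the $\nabla^4\chi$ forcing, and cannot shift that derivative elsewhere. (For $m=0,1$ the highest $\chi$-derivative in (\ref{evolution equation of the m^th covariant derivative of Ric}) is $\nabla^2\chi$ and $\nabla^3\chi$ respectively, both controlled, which is why the issue does not surface in Step~1.) This is precisely why the paper's Step~2 abandons the maximum principle in favour of the Moser-iteration machinery of Lemmas \ref{L2 control of the derivatives of Ricci curvature}, \ref{mean value inequality for derivatives of Ric} and \ref{lemma: mean value inequality for derivative of ricci for short time}: in the proof of Lemma \ref{mean value inequality for derivatives of Ric} the term $I_2$, which carries $\nabla^{m+2}\chi$, is handled by a further integration by parts that moves one derivative off $\chi$ onto the test function and lower-order curvatures, which is an operation unavailable pointwise. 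The $1/\sqrt t$ short-time behaviour of $\chi^2|\nabla^2 Rm|$ that you identify is a real but secondary concern (it is what Lemma \ref{lemma: mean value inequality for derivative of ricci for short time} is designed for); the time-split argument in the proof of Theorem \ref{theorem: improved doubling time estimate} works there only because $\chi\equiv1$ and no $\nabla^4\chi$ term arises. As written, your route to the $m=2$ estimate does not close.
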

\begin{proof}
STEP 1: Control $|Ric|$ and $|\nabla Ric|$.

By (\ref{evolution equation of the m^th covariant derivative of Ric}), the Ricci curvature tensor satisfies
\[
\begin{split}
\frac{\partial}{\partial t } Ric = \chi^2 \Delta Ric + \chi \nabla \chi \ast \nabla Ric + \chi \nabla^2 \chi \ast Ric + \nabla \chi \ast \nabla \chi \ast Ric + \chi^2 Rm \ast Ric.
 \end{split}
\]
Hence
\[
\begin{split}
\frac{\partial}{\partial t } |Ric|^2 \leq & \chi^2 \Delta |Ric|^2 -2 \chi^2 |\nabla Ric|^2 + C(n) \chi |\nabla \chi| |\nabla Ric||Ric| + C(n) \chi |\nabla^2 \chi| |Ric|^2\\
& + C(n) |\nabla \chi|^2 |Ric| + \chi^2 |Rm| |Ric|^2 + 4\chi^2 |Ric|^3.
\end{split}
\]
By Lemma \ref{lemma: derivative estimats of chi}, we have
\[
\begin{split}
\frac{\partial }{\partial t} |Ric|^2 \leq \chi^2 \Delta |Ric|^2 + C_1 K_i |Ric|^2,
\end{split}
\]
which holds on $B_i$ for some constant $C_2$ depending on $n, \Gamma$ and $I$.

Let $\Omega$ be the support of $\chi$, and assume that $\Omega$ is covered by $B_i, i=1,2,...,N$. Define
\[\Phi (x,t) =\sum_{i=1}^N \phi_i(x) P^2_i \exp(\Lambda_0 K_i t),\]
where $\Lambda_0$ is a constant to be determined, $\phi_i$ are the cut-off functions in Lemma \ref{lemma: derivative estimats of chi}.

Recall that $P_i=(n-1)K_ie^{A\bar{K}-AK_i}$. Since $1 \leq  K_j \leq K_i + \Gamma$, it is easy to check that $P_j \leq (1+\Gamma)e^{A\Gamma} P_i$ when $\Bhat_j \cap \Bhat_i \neq \emptyset$.

If $T \leq \frac{A}{\Lambda_0}$, then for any $(x,t) \in B_i \times [0,T]$ we have the following pointwise estimates.
\begin{eqnarray*}
\Phi(x,t) & = & \sum_{\Bhat_j \cap B_i \neq \emptyset} \phi_j P^2_j \exp(\Lambda_0 K_j t) \\
          & \leq  & \sum_{\Bhat_j \cap B_i \neq \emptyset} (1+\Gamma)^2 e^{2A\Gamma} P^2_i e^{\Lambda_0 \Gamma t}\exp(\Lambda_0 K_i t) \\
          & \leq & C_3 I P^2_i \exp(\Lambda_0 K_i t),
\end{eqnarray*}
where $C_3=(1+\Gamma)^2e^{3A\Gamma}$. By Lemma \ref{lemma: derivative estimats of chi} we have $|\Delta \phi_i|(x,t) \leq C_2 K_i $ for some $C_2$ depending only on $n$, hence
\begin{eqnarray*}
 \Delta \Phi(x,t) & = & \sum_{\Bhat_j \cap B_i \neq \emptyset} \Delta \phi_j(x,t) P^2_j \exp(\Lambda_0 K_j t) \\
                  & \leq & \sum_{\Bhat_j \cap B_i \neq \emptyset} C_2 K_j P^2_j \exp(\Lambda_0 K_j t) \\
                  & \leq & \sum_{\Bhat_j \cap B_i \neq \emptyset} C_2 ( K_i+ \Gamma ) P^2_j \exp(\Lambda_0 (K_i+\Gamma) t) \\
                  & \leq & (1+\Gamma)^3IC_2e^{3A\Gamma} K_i P^2_i \exp(\Lambda_0 K_i t).
\end{eqnarray*}

\begin{eqnarray*}
 \frac{\partial}{\partial t} \Phi(x,t) & = & \sum_{\Bhat_j \cap B_i \neq \emptyset} \phi_j(x,t) P^2_i \Lambda_0 K_i \exp(\Lambda_0 K_i t) \\
                                       & \geq & \Lambda_0 K_i P^2_i \exp(\Lambda_0 K_i t)
\end{eqnarray*}

Take
\[\Lambda_0 \geq C_1 C_3 I + (1+\Gamma)^3IC_2e^{3A\Gamma},\]
then
\begin{equation}
(\frac{\partial}{\partial t} - \chi^2 \Delta ) \Phi(x,t)  \geq C_1 C_3I K_i P^2_i \exp(\Lambda_0 K_i t) \geq  C_1 K_i \Phi(x,t).
\end{equation}

Therefore

\[
 (\frac{\partial}{\partial t } - \chi^2 \Delta) (|Ric|^2- \Phi)(x,t)  \leq  C_1 K_i (|Ric|^2 -  \Phi)(x,t).
\]

Since $B_i, i=1,2,..., N < \infty$ cover $\Omega$, we have
\[(\frac{\partial}{\partial t } - \chi^2 \Delta) (|Ric|^2- \Phi) \leq C_1 (\sum_i^N \phi_i K_i) (|Ric|^2 - \Phi), \]
holds on $\Omega$.

By Assumption \ref{initial good cover assumption},
\[|Ric|^2(x,0) \leq \Phi(x,0), \quad x\in \Omega.\]
And since $\chi$ is compactly supported on $\Omega$, the metric does not change near the boundary and outside of $\Omega$, hence
\[|Ric|^2(x,t) = |Ric|^2(x,0) \leq \Phi(x,0) \leq \Phi(x,t), \quad  x\in \partial \Omega, \quad t \in [0,T].\]
Hence the maximum principle implies
\[|Ric|^2(x,t) \leq \Phi(x,t),\]
for all $(x,t) \in M \times [0,T]$. In particular, when $(x,t) \in B_i \times [0,T]$, we have
\[|Ric|(x,t) \leq \sqrt{C_3 I } P_i \exp(\frac{1}{2}\Lambda_0 K_i t). \]
And when $x \in \Bhat_i $ it must be covered by some $B_j$ with $\Bhat_j \cap \Bhat_i \neq \emptyset$, so
\[|Ric|(x,t) \leq \sqrt{C_3 I } (1+\Gamma)e^{2A\Gamma} P_i \exp(\frac{1}{2}\Lambda_0 K_i t), \]
as long as $t \leq T$.

Similarly we have a differential inequality for the first covariant derivative of the Ricci curvature tensor on $B_i$:
\[\frac{\partial}{\partial t} |\nabla Ric|^2  \leq \chi^2 \Delta |\nabla Ric|^2 + C_4 K_i |\nabla Ric|^2  + C_4 K_i^2 |Ric|^2,\]
where $C_4$ depends on $n, I$ and $\Gamma$.

Let
\[\Phi_1(x,t)= \sum_{i=1}^N \phi_i K_i P^2_i \exp(\Lambda_1 K_i t),\]
as before we can find a constant $\Lambda_1 = C(n,\Gamma, I) I e^{3A\Gamma}$ (WLOG we choose $\Lambda_1\geq \Lambda_0$), such that if $T \leq \frac{A}{ \Lambda_1}$ then we have
\[
(\frac{\partial}{\partial t} - \chi^2 \Delta ) \Phi_1(x,t)  \geq  C_4 K_i \Phi_1(x,t) + C_4 K_i^2 \Phi(x,t),
\]
for all $(x,t)\in B_i \times [0,T]$. Hence the same argument for controlling $|Ric|$ yields
\[|\nabla Ric|^2 (x,t) \leq \Phi_1(x,t).\]
In particular,
\[|\nabla Ric| (x,t) \leq C(n,\Gamma, I) e^{2A\Gamma} \sqrt{I}  K_i^\frac{1}{2} P_i e^{\frac{1}{2}\Lambda_1 K_i t},\]
when $(x,t) \in B_i \times [0,T]$. And
\[|\nabla Ric| (x,t) \leq C(n,\Gamma, I) e^{4A\Gamma} \sqrt{I}  K_i^\frac{1}{2} P_i e^{\frac{1}{2}\Lambda_1 K_i t},\]
when $(x,t) \in \Bhat_i \times [0,T]$.

\smallskip

STEP 2: Control $\chi^2 |\nabla ^2 Ric|$.

Lemma \ref{lemma: estimate of Rm and its covariant derivatives under local ricci flow} provides control of $|\nabla Rm|$ and $|\nabla^2 Rm|$,  STEP 1 provides control of $|Ric|$ and $|\nabla Ric|$ in $\Bhat_i \times [0,T]$. The integral bound on $|Ric|$ in Definition \ref{definition: generalized doubling time} implies
\[Vol_{g(t)} B_{g(0)} (x_i, r) \leq C(n) V(r), \quad  t\in [0,T],\]
where $V(r)=Vol_{g(0)} B_{g(0)}(x_i, r)$, $r < 16 r_i$.

WLOG we can assume $r_i \leq 2/\sqrt{K_i}$, otherwise we can cover $B_i$ with finitely many balls with radius $2/\sqrt{K_i}$ and work on each smaller ball. Then Lemma \ref{lemma: Sobolev inequality} and the equivalence of metrics (\ref{equivalence of metric}) imply a uniform Sobolev inequality on the time interval $[0,T]$:
\[
\left( \int |f|^\frac{2n}{n-2} d\mu(t)\right)^\frac{n-2}{n} \leq C(n) \frac{r_i^2}{Vol_{g(0)}(\Bhat_i)} \int (|\nabla f|_{g(t)}^2 + r_i^{-2} f^2) d\mu(t),
\]
for $f$ compactly supported on $\Bhat_i$.

We can use Lemma \ref{L2 control of the derivatives of Ricci curvature} to prove
\[ \| \chi^2 |\nabla^2 Ric| \|_{L^2(B_{g(0)}(x_i, 2r_i) \times [0,t])} \leq C(n,\Gamma, I)e^{4A\Gamma}\sqrt{(1+T)V(2r_i)} K_i P_i \exp( \Lambda_1 K_i t).\]
Then Lemma \ref{mean value inequality for derivatives of Ric} and \ref{lemma: mean value inequality for derivative of ricci for short time} imply that
\[\sup_{B_i \times [0, t]} \chi^2 |\nabla^2 Ric| \leq C(n, \Gamma,I) e^{4A\Gamma}\max\{A,1\}^2 K_i P_i \exp( \Lambda_1 K_i t),\]
where  $0 \leq t \leq T \leq \frac{A}{ \Lambda_1}$.
\end{proof}
\begin{rem}\label{remark: handling A by e^{-A}}
$\max\{A,1\}^2$ in the above estimate can be absorbed by the exponential term since $\max\{A,1\}^2 e^{-AK_i}\leq 4e^{-AK_i/2}$.
\end{rem}

Now we can prove the main proposition of this subsection.
\begin{proof}[Proof of Proposition \ref{theorem: lifespan estimate - local ricci flow}]

Under the local Ricci flow, the Riemann curvature tensor evolves by
\[
\begin{split}
\frac{\partial}{\partial t} R_{ijk}^l =& -\nabla_i\nabla_k (\chi^2 R_j^l) + \nabla_i\nabla^l (\chi^2 R_{jk}) +\nabla_j\nabla_k (\chi^2 R_i^l) -\nabla_j\nabla^l (\chi^2 R_{ik}) \\
&+ \chi^2 g^{lp} ( R_{ijk}^q R_{pq} + R_{ijp}^q R_{kq}).
\end{split}
\]
By Lemma \ref{lemma: derivative estimats of chi}, at a point $(x,t) \in B_i \times [0,T]$ we have
\begin{equation}\label{equation: inequality satisfied by Rm in the proof of lifespan of local Ricci flow}
\begin{split}
\frac{\partial}{\partial t} |Rm| \leq & 8 ( \chi |\nabla^2 \chi| + |\nabla \chi|^2 ) |Ric| + 8 \chi |\nabla \chi| |\nabla Ric| + 4 \chi^2 |\nabla^2 Ric|  \\
&+ 4 \chi^2 |Rm| |Ric| \\
\leq & C_1 (K_i |Ric| + \sqrt{K_i} \chi |\nabla Ric| + \chi^2 |\nabla^2 Ric| ),
\end{split}
\end{equation}
where $C_1$ depends on $n, I$.

Suppose $x\in B_i$ and $t\leq T \leq \frac{A}{ \Lambda_1 }$, by Lemma \ref{lemma: estimates of Ricci curvature and its covariant derivatives - local ricci flow} we have
\[
\begin{split}
\int_0^t \chi^m |\nabla^m Ric|(x,\sigma) d\sigma \leq & \int_0^T C K_i^{m/2} P_i \exp(\Lambda_1 K_i \sigma) d\sigma \\
= & C K_i^{m/2} e^{A\bar{K}-AK_i}\Lambda_1^{-1} (e^{\Lambda_1 K_i T} -1),
\end{split}
\]
where $m=0,1,2$.
By Lemma \ref{lemma: minimum of a continuous function}, $f(s)=h(\frac{ \Lambda_1 \epsilon  }{ Ce^{A\bar{K}}}, A) (s)$ has a minimum $f_{min} > 0$, which can be estimated from below by a constant in the form $C(n, I, \Gamma)e^{-A\bar{K}-4A\Gamma}A\epsilon$.

If $T < f_{min}/\Lambda_1$, then direct calculation yields
\begin{equation}\label{equation: integral bounds on Ric and its derivatives}
\int_0^t \chi^m |\nabla^m Ric|(x,\sigma) d\sigma < \epsilon K_i^{m/2},\quad m=0,1,2,
\end{equation}
and
\[
\begin{split}
\sup_{B_i}|Rm|(t)  & \leq \sup_{B_i} |Rm|(0) + C_1\int_0^t (K_i  |Ric| + \sqrt{K_i} \chi |\nabla Ric| + \chi^2 |\nabla^2 Ric| ) \\
& \leq (1+3C_1 \epsilon) K_i,
\end{split}\]
when $t \leq T$.

For any $x\in \Bhat_i$, it must be covered by some $B_j$ such that $\Bhat_j \cap \Bhat_i \neq \emptyset$, hence the above inequalities still hold if we multiple the right hand sides by $1+\Gamma$. Therefore, if we take
\[\epsilon = \min\{\frac{1}{2}\ln 2, \frac{1}{9C_1}, \frac{1}{1+\Gamma}\},\]
then the integral bounds in Definition \ref{definition: generalized doubling time} are satisfied with sharp inequalities. Therefore we must have
$T \geq T_0 = f_{min}/\Lambda_1$. Keeping track of the constants, we see that $T_0=C(n,I,\Gamma)e^{-A\bar{K}-7A\Gamma}A$.
\end{proof}

\subsection{Existence of Ricci flows on noncompact manifolds}\label{subsection: existence of rf}

We will first prove the existence of Ricci flow solutions on manifolds satisfying Assumption \ref{initial good cover assumption}.

\begin{thm}\label{theorem: existence of ricci flow on noncompact manifolds}
Let $(M,g)$ be an $n$-dimensional complete noncompact Riemannian manifold satisfying Assumption \ref{initial good cover assumption}. Then the Ricci flow with initial metric $g$ has a solution on $M \times [0,T_0]$, where $T_0 >0$ depends only on $n,\Gamma, A, \bar{K}, I$. Moreover, for any $t\in [0,T_0]$, we have
\[\frac{1}{2}g(0) \leq g(t) \leq 2g(0),\]
\[\sup_{\Bhat_i} |Rm|_{g(t)} \leq 2(1+\Gamma)K_i, \hspace{4pt} i=1,2,3,...\]
\end{thm}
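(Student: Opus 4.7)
The plan is to construct the solution as a smooth limit of local Ricci flows defined on an exhausting sequence of compact regions, using Proposition \ref{theorem: lifespan estimate - local ricci flow} to obtain a uniform lifespan and uniform curvature bound on each $\Bhat_i$.

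First I would choose an exhaustion of $M$ by precompact open sets $\Omega_k$ built as unions of finitely many of the balls $\Bhat_i$ from Assumption \ref{initial good cover assumption}, with $\Omega_k \Subset \Omega_{k+1}$ and $\bigcup_k \Omega_k = M$. For each $k$ I would construct a smooth cut-off $0\le \chi_k\le 1$ supported in $\Omega_{k+1}$ with $\chi_k \equiv 1$ on $\Omega_k$. The key requirement is the pointwise estimate $|\nabla^m \chi_k|_{g(0)}(x) \le C(n,I) K_i^{m/2}$ for $x \in \Bhat_i$, $m=1,2,3$, which is obtained by combining Tam's smooth distance-like function (Lemma \ref{lemma: distance like function with higher derivative bounds}) with the rescaled bump construction of Lemma \ref{lemma: cut-off function on each ball}, applied on each $\Bhat_i$ using assumption (b) (that $r_i \ge \bar r/\sqrt{K_i}$) and the control $|\nabla Rm|_{g(0)} \le C(n) K_i^{3/2}$ coming from Lemma \ref{lemma: control nabla Rm by Rm and nabla nabla Ric}.

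Next I would invoke the standard short-time existence of the local Ricci flow $\partial_t g_k = -2\chi_k^2 Ric(g_k)$ starting from $g$ (due to Yang \cite{yang1992convergence}), which is available because the equation is genuinely parabolic where $\chi_k>0$ and the metric is frozen outside $\text{supp}\,\chi_k$. Applying Proposition \ref{theorem: lifespan estimate - local ricci flow} to $g_k(t)$ gives a uniform lower bound $T_0=T_0(n,\Gamma,A,\bar K,I)>0$ on the lifespan, together with
\[\sup_{\Bhat_i}|Rm|_{g_k(t)} \le 2(1+\Gamma)K_i, \qquad \tfrac12 g(0) \le g_k(t) \le 2 g(0),\]
for every $t\in[0,T_0]$ and every $i$ with $\Bhat_i\subset \Omega_k$; note the latter follows from the integral bound $\int_0^{T_0}\sup_{\Bhat_i}|Ric|\le \tfrac12\ln 2$ in Definition \ref{definition: generalized doubling time}. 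In particular these bounds are independent of $k$.

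Then I would upgrade to uniform control of all higher derivatives on compact subsets: on any fixed $\Bhat_i$, once $k$ is large enough that $\chi_k\equiv 1$ in a neighborhood of $\Bhat_i$, the local Ricci flow equation restricted there is the usual Ricci flow, so Shi's estimates (Lemma \ref{lemma: modified shi's estimates}) applied on the slightly larger $\Bhat_j \supset \Bhat_i$ yield uniform bounds on $|\nabla^m Rm|_{g_k(t)}$ for each $m\ge 0$ and $t\in[\tau, T_0]$, $\tau>0$, with constants depending only on $n, m, \tau, K_j$. A diagonal subsequence argument combined with Arzel\`a--Ascoli then produces a subsequence $g_{k_j}(t)$ converging in $C^\infty_{\text{loc}}(M\times (0,T_0])$ to a smooth family $g(t)$, which inherits all the uniform bounds and satisfies the Ricci flow equation on $M\times(0,T_0]$ since $\chi_{k_j}\to 1$ on every compact subset. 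Continuity at $t=0$ follows from the uniform equivalence $\tfrac12 g(0)\le g_{k_j}(t)\le 2g(0)$ and the evolution equation $\partial_t g_{k_j} = O(K_i)$ on $\Bhat_i$. Completeness of $g(t)$ follows from completeness of $g(0)$ together with the two-sided equivalence. Quadratic curvature growth is inherited from the initial data via the uniform bound on each $\Bhat_i$ and the chain argument in the remark following Assumption \ref{initial good cover assumption}.

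The main technical obstacle is the first step: producing cut-offs $\chi_k$ whose three derivatives obey the scale-sensitive bounds $|\nabla^m\chi_k|\le C(n,I) K_i^{m/2}$ on each $\Bhat_i$, because this is precisely the hypothesis that drives Lemma \ref{lemma: derivative estimats of chi} and Lemma \ref{lemma: estimates of Ricci curvature and its covariant derivatives - local ricci flow}, and without it the uniformity of $T_0$ in $k$ collapses. Everything else is a standard compactness and limit argument once Proposition \ref{theorem: lifespan estimate - local ricci flow} and the Shi-type estimates of Section \ref{section: derivatives estimates for the local ricci flow} are in place.
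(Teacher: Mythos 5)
Your proposal follows essentially the same route as the paper: exhaust $M$ by finite unions of the covering balls, build scale-adapted cut-offs, run the local Ricci flow on each exhausting set, invoke Proposition \ref{theorem: lifespan estimate - local ricci flow} for a uniform lifespan $T_0$ and uniform curvature and metric-equivalence bounds, and pass to a limit in $C^\infty_{\mathrm{loc}}$. The one step you correctly flag as the crux but leave slightly vague --- gluing the per-ball bumps $\phi_i$ of Lemma \ref{lemma: cut-off function on each ball} into a single global $\chi_k$ satisfying $|\nabla^m\chi_k|\le C(n,I)K_i^{m/2}$ on each $\Bhat_i$ --- is exactly what the paper's Lemma \ref{lemma: construction of chi} does, via the partition-of-unity--style quotient $\chi=\bigl(\sum_{i\le N_1}\phi_i\bigr)/\bigl(\sum_{j\le N_2}\phi_j\bigr)$, whose derivative bounds use the bounded intersection number $I$ and the comparability $K_j\le K_i+\Gamma$ on overlapping balls.
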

\begin{rem}\label{remark: preserve unboundedness of curvature}
In the above theorem, we can choose $T_0$ smaller, depending on the value of $\Gamma$, such that if $\sup_{\Bhat_i}  |Rm|_{g(0)} = K_i$ is achieved for an index $i$, then we also have
\[\sup_{\Bhat_i \times [0,T_0] }|Rm| \geq \frac{2}{3} K_i.\]
\end{rem}

Let's first construct the cut-off functions used in the local Ricci flow.
\begin{lem}\label{lemma: construction of chi}
Suppose $(M,g)$ satisfies Assumption \ref{initial good cover assumption}, and $\Omega$ is a compact domain in $M$. Then we can construct a smooth function $0 \leq \chi \leq 1$, which is compactly supported, and $\chi \equiv 1$ on $\Omega$. Moreover, for each index $i$ we have
\[
|\nabla^k \chi|(x) \leq C(n, I, \Gamma) K_{i}^\frac{k}{2}, \quad k=1,2,3, \quad when \quad x\in \Bhat_i.
\]
\end{lem}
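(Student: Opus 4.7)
The plan is to build $\chi$ by constructing one local cut-off $\phi_i$ on each ball $\Bhat_i$ via Lemma \ref{lemma: cut-off function on each ball}, and then gluing the $\phi_i$'s into one smooth function using a finite product over a subcover of $\Omega$. The local finiteness hypothesis (e) and the comparison (d) will then ensure the derivative bounds survive the gluing.

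\textbf{Step 1 (local cut-offs).} For each $i$, I would apply Lemma \ref{lemma: cut-off function on each ball} on $\Bhat_i$ to obtain $\phi_i$ compactly supported in $\Bhat_i$ with $\phi_i \equiv 1$ on $B_i$ and $|\nabla^k \phi_i|_g \leq C(n) K_i^{k/2}$ for $k=1,2,3$. The bound $\sup_{\Bhat_i}|Rm|\leq K_i$ and the size condition $16r_i \geq \bar r/\sqrt{K_i}$ come directly from Assumption (b). The missing hypothesis $\sup |\nabla Rm|\leq K_i^{3/2}$ (up to a dimensional constant, which can be absorbed by replacing $K_i$ with a fixed multiple of itself before invoking the lemma) follows from Lemma \ref{lemma: control nabla Rm by Rm and nabla nabla Ric}: at each point of $\Bhat_i$, rescale the metric by $K_i$ so that the hypotheses $|Rm|,\,|\nabla^2 Ric|\leq C_0$ translate into exactly the bounds supplied by (b) and (c) on a ball of $g$-radius $\sim 1/\sqrt{K_i}$.

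\textbf{Step 2 (global assembly).} Since $\Omega$ is compact and $\{B_i\}$ is an open cover, I would extract a finite subcover $\{B_i\}_{i\in S}$ of $\Omega$ and set
\[
\chi = 1 - \prod_{i\in S}(1 - \phi_i).
\]
Then $0\leq\chi\leq 1$; each $x\in\Omega$ lies in some $B_i$ with $\phi_i(x)=1$, which annihilates the product and forces $\chi(x)=1$; and $\mathrm{supp}(\chi)\subset \bigcup_{i\in S}\Bhat_i$ is a finite union of compact closed balls, hence compact.

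\textbf{Step 3 (derivative bounds).} Fix $x\in\Bhat_i$ and expand $\nabla^m\chi(x)$ by the Leibniz rule. A factor $\nabla^\alpha\phi_j(x)$ with $\alpha>0$ can only be nonzero when $x\in\Bhat_j$, which forces $\Bhat_i\cap\Bhat_j\neq\emptyset$; then Assumption (d) gives $K_j\leq K_i+\Gamma\leq (1+\Gamma)K_i$, and consequently $|\nabla^\alpha\phi_j|(x)\leq C(n,\Gamma)K_i^{\alpha/2}$. By Assumption (e) at most $I$ such indices $j$ contribute at $x$, so a Leibniz expansion of total order $m\leq 3$ has at most $C(m,I)$ non-vanishing terms, each bounded by $C(n,\Gamma)^m K_i^{m/2}$. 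Summing yields $|\nabla^m\chi|(x)\leq C(n,I,\Gamma)K_i^{m/2}$, as required.

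\textbf{Main obstacle.} The argument is essentially a partition-of-unity patching, so the real work is bookkeeping. The one conceptually nontrivial input is Step 1, where $|\nabla Rm|$ is not assumed directly: extracting the correct $K_i^{3/2}$-scaling from the assumed $|\nabla^2 Ric|$ bound through Lemma \ref{lemma: control nabla Rm by Rm and nabla nabla Ric} is the technical step that makes Tam's distance-like function -- and hence Lemma \ref{lemma: cut-off function on each ball} -- available at the local scale $\sqrt{K_i}$.
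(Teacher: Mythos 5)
Your construction is correct but differs from the paper's. The paper introduces an auxiliary compact domain $\hat\Omega \supset \Omega$, chooses two finite subcovers with indices $1,\dots,N_1$ (covering $\Omega$ with $\cup\Bhat_i\subset\hat\Omega$) and $1,\dots,N_2$ (covering $\hat\Omega$, with the extra balls disjoint from $\Omega$), and then sets $\chi=\bigl(\sum_{i\leq N_1}\phi_i\bigr)/\bigl(\sum_{j\leq N_2}\phi_j\bigr)$ on $\hat\Omega$, zero outside. You instead take a single finite subcover $S$ of $\Omega$ and set $\chi=1-\prod_{i\in S}(1-\phi_i)$. Both are standard partition-of-unity patchings, and the derivative bounds come out identically: only indices $j$ with $x\in\Bhat_j$ contribute nontrivial factors, assumption (e) caps their number at $I$, and assumption (d) converts every $K_j^{\alpha/2}$ into $C(\Gamma)K_i^{\alpha/2}$, so the Leibniz expansion (of total order $\leq 3$) closes with constant $C(n,I,\Gamma)$. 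Your formula buys simplicity: the compact support, the bound $\chi\leq 1$, and $\chi\equiv 1$ on $\Omega$ are all immediate, with no need for the auxiliary $\hat\Omega$, the two-stage index split $N_1<N_2$, or the separation condition $\Omega\cap\cup_{i>N_1}\Bhat_i=\emptyset$; the paper's ratio form requires checking that the denominator stays $\geq 1$ on $\hat\Omega$ and that the numerator's support clears $\partial\hat\Omega$. Your Step 1 is also the same as the paper's: invoking Lemma \ref{lemma: control nabla Rm by Rm and nabla nabla Ric} at scale $K_i$ to supply the $|\nabla Rm|\leq C K_i^{3/2}$ hypothesis of Lemma \ref{lemma: cut-off function on each ball}, as the paper does just before Lemma \ref{lemma: derivative estimats of chi}.
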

\begin{proof}
We can find a compact domain $\hat{\Omega}$ containing $\Omega$, such that, with possibly reordering the geodesic balls in the cover, we have two integers $N_1 < N_2$, such that

1. $\Omega \subset \cup_{i=1}^{N_1} B_i$, and $\cup_{i=1}^{N_1} \Bhat_i \subset \hat{\Omega}$;

2. $\hat{\Omega} \subset \cup_{i=1}^{N_2} B_i$, and $\partial \hat{\Omega} \subset \cup_{i=N_1+1}^{N_2} \Bhat_i$;

3. $ \Omega \cap \cup_{i=N_1+1}^{N_2} \Bhat_i =\emptyset$.

For each $\Bhat_i$ in the cover, let $\phi_i$ be the cut-off function on $\Bhat_i$ defined in Lemma \ref{lemma: cut-off function on each ball}, which equals $1$ in $B_i$. Define
\begin{equation}
\chi=\frac{\sum_{i=1}^{N_1} \phi_i}{\sum_{j=1}^{N_2} \phi_j}
\end{equation}
on $\hat{\Omega}$, and define it to be $0$ outside of $\hat{\Omega}$.
Then it is easy to check that $\chi$ has the desired properties.
\end{proof}

Now let's prove the theorem.
\begin{proof}[Proof of Theorem \ref{theorem: existence of ricci flow on noncompact manifolds}]
Let $\Omega_j, j=1,2,...,$ be a compact exhaustion of $M$, for each $\Omega_j$, let $\chi_j$ be the cut-off function constructed in Lemma \ref{lemma: construction of chi}. Existence of the local Ricci flow
\[\frac{\partial}{\partial t} g = -2 \chi_j^2 Ric, \quad g(0)=g\]
is due to D. Yang \cite{yang1992convergence}, see also G. Xu \cite{xu2013short} for a rigorous proof. By Proposition \ref{theorem: lifespan estimate - local ricci flow} the solution $g_j(t)$ exists on $[0, T_0]$, where $T_0>0$ is independent of $j$. And $g_j(t)$ are uniformly equivalent to the initial metric $g$, with curvature uniformly bounded on any compact subset of $M$.

On $\Omega_j$, $g_j(t)$ agrees with the Ricci flow. Hence, by passing to subsequences when necessary, $g_j(t)$ converges to a Ricci flow solution $g(t)$ on $M \times [0, T_0]$ in Cheeger-Gromov sense. The equivalence of metrics $g(t), t\in [0,T]$ and the curvature bound are clear from Definition \ref{definition: generalized doubling time}.
\end{proof}

Moreover, if we take $\epsilon < \frac{1}{9C_1(1+\Gamma)}$ in (\ref{equation: integral bounds on Ric and its derivatives}) in the proof of Proposition \ref{theorem: lifespan estimate - local ricci flow}, then the differential inequality (\ref{equation: inequality satisfied by Rm in the proof of lifespan of local Ricci flow}) implies that
\[
\begin{split}
|Rm|(x,t) \geq & |Rm|(x,0) -  C_1\int_0^t (K_j  |Ric| + \sqrt{K_j} \chi |\nabla Ric| + \chi^2 |\nabla^2 Ric| ) \\
              \geq &  |Rm|(x,0) - \frac{1}{3(1+\Gamma)} K_j,
\end{split}
\]
for all $(x,t) \in B_j \times [0,T_0]$. Since $K_j \leq K_i+\Gamma \leq (1+\Gamma)K_i$ whenever $\Bhat_j \cap \Bhat_i \neq \emptyset$, we have
\[ |Rm|(x,t) \geq |Rm|(x,0) -  \frac{1}{3} K_i\]
for all $(x,t) \in \Bhat_i$. If there is a point $x_0 \in \Bhat_i$ such that $|Rm|(x_0, 0) = K_i$, then
\[\sup_{\Bhat_i \times [0,T_0] }|Rm| \geq \frac{2}{3} K_i.\]
Hence we have Remark \ref{remark: preserve unboundedness of curvature}.

By the construction of the solution in Theorem \ref{theorem: existence of ricci flow on noncompact manifolds} and the proof of Proposition \ref{theorem: lifespan estimate - local ricci flow}, we have the following lemma which will be useful later.
\begin{lem}\label{lemma: barrier estimates of Ric and its derivatives}
Under a Ricci flow solution constructed by Theorem \ref{theorem: existence of ricci flow on noncompact manifolds}, we have
\[ |\nabla^m Ric|^2 \leq \Phi_m, \quad m=0,1,2, \]
where
\[\Phi_m(x,t) = \sum_{i=1}^\infty \phi_i(x) K_i^m P_i^2 e^{\Lambda K_i t}, \quad m=0,1,2,\]
$\phi_i$ is a cut-off function on $\Bhat_i$ which equals $1$ in $B_i$, $i=1,2,3,...$, and $\Lambda$ is a positive constant depending on $n, A, I, \Gamma$.
\end{lem}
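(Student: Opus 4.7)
The plan is to recognize that this lemma is essentially a distillation of the barrier-function estimates that were already produced, pointwise, inside the proof of Lemma \ref{lemma: estimates of Ricci curvature and its covariant derivatives - local ricci flow}, and then to transfer those estimates through the Cheeger--Gromov limit constructed in Theorem \ref{theorem: existence of ricci flow on noncompact manifolds}. So the work is bookkeeping of constants and a limit argument, not new analysis.

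First, I would fix an exhaustion $\Omega_j \nearrow M$ and the corresponding local Ricci flows $g_j(t)$ with cut-offs $\chi_j$ as in the proof of Theorem \ref{theorem: existence of ricci flow on noncompact manifolds}. For each such $g_j(t)$, the proof of Lemma \ref{lemma: estimates of Ricci curvature and its covariant derivatives - local ricci flow} constructed barrier functions of the shape $\Phi_m^{(j)}(x,t)=\sum_i \phi_i(x) K_i^m P_i^2 e^{\Lambda K_i t}$ (with $m=0,1$ appearing explicitly there, and with $m=2$ handled after Step 2 using the mean-value inequality applied to a higher-order multiple of the same exponential barrier) such that $|\nabla^m Ric_{g_j}|^2 \leq \Phi_m^{(j)}$ throughout $M\times[0,T_0]$, for a constant $\Lambda = \Lambda(n,A,I,\Gamma)$. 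The key point is that the $\phi_i$, $K_i$, $P_i$ depend only on the fixed initial cover, not on $j$; so $\Phi_m^{(j)}=\Phi_m$ is the same barrier for every $j$. For $m=2$ the uniform estimate produced by the argument came with an additional factor $\max\{A,1\}^2$, but Remark \ref{remark: handling A by e^{-A}} absorbs this into the exponential decay in $P_i^2$, possibly at the cost of enlarging $\Lambda$.

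Next, since the proof of Theorem \ref{theorem: existence of ricci flow on noncompact manifolds} gives $g_j(t) \to g(t)$ in Cheeger--Gromov sense with uniformly bounded curvature on compact subsets (and equivalent metrics on $[0,T_0]$), standard Shi-type regularity yields $C^\infty_{\text{loc}}$ convergence of the curvature tensors and hence of $|\nabla^m Ric_{g_j}|$ to $|\nabla^m Ric_{g}|$ on compact subsets of $M\times[0,T_0]$ for each $m=0,1,2$. Since the barriers $\Phi_m$ are independent of $j$, passing to the limit in the pointwise inequality $|\nabla^m Ric_{g_j}|^2(x,t) \leq \Phi_m(x,t)$ gives $|\nabla^m Ric_g|^2 \leq \Phi_m$ on $M\times[0,T_0]$, as claimed.

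The only nontrivial point to verify carefully is that the $m=2$ estimate proved in Step 2 of Lemma \ref{lemma: estimates of Ricci curvature and its covariant derivatives - local ricci flow} can be recast as a pointwise bound of precisely the stated form $K_i^2 P_i^2 e^{\Lambda K_i t}$ on each $B_i$, rather than the slightly messier bound appearing there. This is the only real obstacle, and it is resolved exactly as in Remark \ref{remark: handling A by e^{-A}}: the nuisance prefactor $\max\{A,1\}^2$ satisfies $\max\{A,1\}^2 e^{-AK_i/2}\leq C$, so by replacing the original exponent $\Lambda_1$ by $\Lambda=\Lambda_1+\tfrac{1}{2}$ (say) and redefining $P_i$ using the same $K_i e^{A(\bar K - K_i)}$ formula from Assumption \ref{initial good cover assumption}(c), the extra factor is absorbed into $e^{\Lambda K_i t}$. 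With that accounting done, the lemma follows directly from the already-established barrier inequality and the compact-subset convergence of the approximate flows.
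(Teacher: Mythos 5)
Your proposal reconstructs what the paper leaves implicit: the lemma is stated without proof, preceded only by the remark that it follows "by the construction of the solution in Theorem \ref{theorem: existence of ricci flow on noncompact manifolds} and the proof of Proposition \ref{theorem: lifespan estimate - local ricci flow}." Your two-step plan --- read off the barrier inequalities $|Ric|^2 \leq \Phi$, $|\nabla Ric|^2 \leq \Phi_1$, $\chi^2|\nabla^2 Ric| \leq (\text{barrier})$ from the proof of Lemma \ref{lemma: estimates of Ricci curvature and its covariant derivatives - local ricci flow} for each approximating local Ricci flow $g_j$, then pass to the Cheeger--Gromov limit, noting that the barrier data $(\phi_i, K_i, P_i)$ come from the fixed initial cover and are independent of $j$ --- is exactly the intended argument. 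Also worth making explicit, though you leave it implicit in "compact subsets": for $m=2$ the estimate in Step 2 of Lemma \ref{lemma: estimates of Ricci curvature and its covariant derivatives - local ricci flow} carries the weight $\chi_j^2$, so one must fix a compact $K$, take $j$ large enough that $\chi_j\equiv 1$ on $K$, and only then drop the weight before passing to the limit.

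One small caveat on your $m=2$ bookkeeping: the mechanism "replace $\Lambda_1$ by $\Lambda_1+\tfrac12$" does not literally absorb the prefactor $\max\{A,1\}^2 e^{4A\Gamma}$ produced in Step 2, since at $t=0$ an increase in $\Lambda$ changes nothing. What Remark \ref{remark: handling A by e^{-A}} actually trades is the exponential decay rate ($e^{-AK_i}\mapsto e^{-AK_i/2}$), so the $m=2$ bound, written in terms of the $P_i$ of Assumption \ref{initial good cover assumption}(c), still carries a constant prefactor $C(n,A,I,\Gamma)$. That said, this imprecision is already present in the lemma statement itself --- the way it is invoked in the proof of Theorem \ref{theorem: transfer rate estimate} immediately replaces the barrier by $C(n,\Omega_0,\alpha) K_i^{2+m} e^{-AK_i}$, confirming the constant-prefactor reading --- so your reconstruction matches the paper's level of rigor.
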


Now, as a corollary of Theorem \ref{theorem: existence of ricci flow on noncompact manifolds}, we can prove Theorem \ref{theorem: existence of ricci flow strong assumption}. Let's first state a lemma which is a direct consequence of Definition \ref{definition: second order curvature scale}.

\begin{lem}\label{lemma: no mutual inclusion under curvature scale}
Let $\rho_x$ be as in Definition \ref{definition: second order curvature scale}, for any two points $x \neq y$ in a Riemannian manifold $(M,g)$, $B_g(x,\rho_x)$ is not properly contained in $B_g(y, \rho_y)$ and vice versa.
\end{lem}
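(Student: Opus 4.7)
The plan is to argue by contradiction. Suppose $B_g(x,\rho_x)\subsetneq B_g(y,\rho_y)$ properly; the vice versa follows by exchanging the roles of $x$ and $y$.

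Since $B_g(x,\rho_x)\subseteq B_g(y,\rho_y)$, the defining inequalities at scale $\rho_y$ propagate to give $|Rm|\leq \rho_y^{-2}$ and $|\nabla^m Ric|\leq (n-1)\rho_y^{-2-m}$ ($m=1,2$) on $B_g(x,\rho_x)$. The maximality of $\rho_x$, combined with smoothness of $Rm$ and its Ricci derivatives, forces at least one of the three corresponding inequalities in Definition \ref{definition: second order curvature scale} to be saturated at some point $p\in\overline{B_g(x,\rho_x)}$, say $|Rm|(p)=\rho_x^{-2}$. Otherwise all three would be strict with a uniform positive margin on the compact closure, and a small enlargement of the radius would preserve them by continuity, contradicting maximality. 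Since $p\in\overline{B_g(x,\rho_x)}$, it is approachable through points $z\in B_g(x,\rho_x)\subseteq B_g(y,\rho_y)$ satisfying $|Rm|(z)\leq \rho_y^{-2}$; continuity of $|Rm|$ at $p$ then yields $\rho_x^{-2}=|Rm|(p)\leq \rho_y^{-2}$, i.e.\ $\rho_y\leq \rho_x$ (the same conclusion arises from whichever of the three inequalities is saturated).

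Combining $\rho_y\leq \rho_x$ with the original inclusion yields $B_g(x,\rho_y)\subseteq B_g(x,\rho_x)\subseteq B_g(y,\rho_y)$: a one-sided inclusion of two open geodesic balls of the common radius $\rho_y$ with distinct centers. To finish, extend a minimizing geodesic from $y$ through $x$ by an additional arclength $t\in(\rho_y-d(x,y),\rho_y)$; this interval is nonempty because $x\neq y$ gives $d(x,y)>0$ and $x\in B_g(y,\rho_y)$ gives $d(x,y)<\rho_y$. If the extension remains length-minimizing from $y$, the endpoint $z$ satisfies $d(x,z)=t<\rho_y$ (so $z\in B_g(x,\rho_y)$) while $d(y,z)=d(x,y)+t>\rho_y$ (so $z\notin B_g(y,\rho_y)$), contradicting the inclusion.

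The main obstacle is the geodesic-extension step: one must guarantee that the minimizing geodesic from $y$ through $x$ admits a length-minimizing continuation to distance just beyond $\rho_y$ from $y$, i.e.\ that the cut locus of $y$ does not obstruct within the relevant region. This is where the inherited bound $|Rm|\leq \rho_y^{-2}$ on $B_g(y,\rho_y)$ is used: Rauch comparison keeps the conjugate radius from $y$ above $\pi\rho_y$, and a Klingenberg-type argument rules out non-conjugate cut points within the needed range. The other technical point is ensuring the saturation point $p$ in the first paragraph genuinely exists and handles the three defining inequalities uniformly, which is standard given the smoothness of the curvature tensor and the compactness of $\overline{B_g(x,\rho_x)}$.
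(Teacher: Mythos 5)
Your first paragraph is correct and is in fact more carefully reasoned than the paper's own proof. You deduce from the inclusion and the maximality of $\rho_x$ that the saturation point $p\in\overline{B_g(x,\rho_x)}$ satisfies both $|Rm|(p)\geq\rho_x^{-2}$ and, by continuity through points of $B_g(x,\rho_x)\subseteq B_g(y,\rho_y)$, also $|Rm|(p)\leq\rho_y^{-2}$; hence $\rho_y\leq\rho_x$. This is the only inequality that follows from the saturation argument. The paper instead opens by asserting \emph{the opposite inequality} $\rho_x<\rho_y$ as an immediate consequence of proper containment, and then uses the strictness of $\rho_y^{-2}<\rho_x^{-2}$ to make the saturation point produce an instant contradiction. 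No justification is given there for $\rho_x<\rho_y$; it is exactly the metric-geometry fact (``a ball strictly contained in another ball with a different center must have strictly smaller radius'') that you are attempting to establish in your second step. So you and the paper go genuinely different routes: the paper front-loads the metric fact and lets the saturation finish; you extract from the saturation only what it actually yields ($\rho_y\leq\rho_x$) and then try to prove the metric fact explicitly.

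The gap is in that metric step. Your geodesic-extension argument requires the extension of the minimizing geodesic from $y$ through $x$ to remain minimizing from $y$ up to arclength just beyond $\rho_y$; equivalently, the cut locus of $y$ in that direction must lie beyond $\rho_y$. Rauch comparison, using $|Rm|\leq\rho_y^{-2}$ on $B_g(y,\rho_y)$, does give conjugate radius at least $\pi\rho_y$, but a Klingenberg-type argument does \emph{not} then rule out nearer cut points: Klingenberg's lemma controls the injectivity radius in terms of the conjugate radius \emph{and} the length of the shortest geodesic loop, and there is no hypothesis here (no injectivity-radius, volume, or diameter bound) excluding arbitrarily short non-contractible geodesic loops through $y$, which would place cut points of $y$ well inside $B_g(y,\rho_y)$. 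So the sentence ``a Klingenberg-type argument rules out non-conjugate cut points within the needed range'' is unsupported, and with it the claim $d(y,z)=d(x,y)+t$ for the extended endpoint. (A minor separate point: you only have $d(x,z)\leq t$, not $d(x,z)=t$, but $\leq$ suffices for that side.) As written, then, the second paragraph of your proof does not close; you would need either an additional argument controlling the cut locus of $y$ inside $B_g(y,\rho_y)$, or a different mechanism for extracting the final contradiction from $\rho_y\leq\rho_x$ and the strict inclusion.
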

\begin{proof}
Suppose $B_g(x,\rho_x) \subset B_g(y,\rho_y)$ properly, then $\rho_x < \rho_y$ and we have
\[ \sup_{B_g(x,\rho_x)} |Rm| \leq \frac{1}{\rho_y^2} < \frac{1}{\rho_x^2},\]
\[ \sup_{B_g(x,\rho_x)} |\nabla ^m Ric| \leq \frac{n-1}{\rho_y^{2+m}} < \frac{n-1}{\rho_x^{2+m}}, \quad m=1,2.\]
However, by definition of $\rho_x$, we must have a point $x_0 \in \overline{B_g(x,\rho_x)}$, such that either
\[|Rm|(x_0) = \frac{1}{\rho_x^2} \quad or \quad |\nabla Ric|(x_0) =\frac{n-1}{\rho_x^{2+m}} \quad for \quad m=1 \quad or \quad 2,\]
which leads to a contradiction.
\end{proof}

\begin{proof}[Proof of Theorem \ref{theorem: existence of ricci flow strong assumption}]
We only need to check that the initial manifold in the theorem satisfies Assumption \ref{initial good cover assumption}, then the result follows from Theorem \ref{theorem: existence of ricci flow on noncompact manifolds}.

Let $\Omega$ be a compact domain on $M$. Let $L=17 \sqrt{1+\gamma}$, and let $B_g(x_i, r_i/L)$, $i=1,2,...N$, be a maximal set of mutually disjoint geodesic balls with $x_i \in \Omega$ and $r_i=\min \{\frac{\rho_{x_i}}{16}, 1\}$. Such a maximal set must exist since there is no local blow-up of curvature. For convenience we denote $B_i=B_g(x_i, r_i)$ and $\Bhat_i=B_g(x_i, 16r_i)$.

\emph{Claim 1:}  When $\Bhat_i\cap \Bhat_j \neq \emptyset$, we have
\[\frac{r_j}{16\sqrt{1+\gamma}} < r_i < 16\sqrt{1+\gamma} r_j.\]

\emph{Proof of Claim 1: } If $\Bhat_i\cap \Bhat_j \neq \emptyset$, assumption (ii) implies $|\rho_{x_i}^2 -\rho_{x_j}^2|\leq \gamma$, hence
\[\rho_{x_j}\geq \frac{\rho_{x_i}}{\sqrt{1+\gamma\rho_{x_i}^2}}.\]
The right-hand side of the above inequality is nondecreasing in $\rho_{x_i}$, thus when $\rho_{x_i} \geq 1$ we have
\[\rho_{x_j} \geq \frac{1}{\sqrt{1+\gamma}},\]
hence
\[r_i \leq 1 \leq 16\sqrt{1+\gamma} r_j.\]
On the other hand, if $\rho_{x_i}<1$, then we have
\[\rho_{x_i} \leq \rho_{x_j}\sqrt{1+\gamma \rho_{x_i}^2} < \rho_{x_j}\sqrt{1+\gamma},\]
hence
\[r_i = \frac{\rho_{x_i}}{16} \leq \sqrt{1+\gamma} r_j.\]
Therefore Claim 1 is proved.

Now we claim that $\Omega$ is covered by $B_i=B_g(x_i, r_i)$, $i=1,2,...,N$. Indeed, for any $y\in \Omega$, denote $r_y=\min \{\frac{\rho_y}{16}, 1\}$, by the maximality, there must be some index $i$ such that $B_g(y, r_y/L) \cap B_g(x_i, r_i/L) \neq \emptyset$. Note that the proof of Claim 1 also yields
\[ r_y \leq 16 \sqrt{1+\gamma} r_i. \]
Thus
\[d(x_i, y) \leq (r_i + r_y)/L < r_i,\]
and consequently $y\in B_i$. Therefore the claim is proved.

Choose a compact exhaustion of $M$ by $\Omega_1 \subset \Omega_2 \subset ...\subset \Omega_i \subset ...$, and construct a cover of each $\Omega _i$ as above, then we can take a subsequential limit to find a cover of $M$ by $B_i=B_g(x_i, r_i)$, $i=1,2,...$ with $B_g(x_i, r_i/L)$ disjoint. Hence we have verified (a).

Let $K_i=\max \{(1+\gamma)\bar{r}^2/r_i^2, 1\}$, where $\bar{r}$ is the same constant as in Assumption \ref{initial good cover assumption}, then
\[\sup_{\Bhat_i} |Rm| \leq K_i.\]
Let $y$ be any point in $\Bhat_i$. By Lemma \ref{lemma: no mutual inclusion under curvature scale} we have
\[ \rho_y < 2 \rho_{x_i}.\]
If $\rho_{x_i} < 1$, then $K_i = 256 (1+\gamma) \bar{r}^2/ \rho_{x_i}^2$, and assumption (ii) implies
\[\rho_{x_i}\leq \rho_y \sqrt{1+\gamma \rho_{x_i}^2} < \rho_y\sqrt{1+\gamma}.\]
Hence for $m=0,1,2$, by assumption (i) we have
\[ |\nabla^m Ric|(y) \leq \frac{(1+\gamma)^{1+m/2}(n-1)}{\rho_{x_i}^{2+m}} e^{\beta -\alpha \rho_{x_i}^{-2}/4 } \leq (n-1)K_i^{1+m/2} e^{\beta - \alpha K_i /1024(1+\gamma)\bar{r}^2}.\]
If $\rho_{x_i} \geq 1$, then $K_i \leq 256 (1+\gamma)\bar{r}^2$, and
\[ |\nabla^m Ric|(y) \leq (n-1)K_i^{1+m/2} \leq (n-1)K_i^{1+m/2} e^{\alpha/4 -\alpha K_i/1024(1+\gamma)\bar{r}^2}.\]
Therefore we can take $A=\frac{\alpha}{1024(1+\gamma)\bar{r}^2}$ and $\bar{K}=\max\{\frac{\beta}{A}, \frac{\alpha}{4A} \}$, then (b) and (c) are verified.

Now take $\Gamma=256(1+\gamma)\gamma\bar{r}^2$, then (d) directly follows from assumption (ii).

To check (e), we need to compare $r_i$ and $r_j$ when $\Bhat_i\cap \Bhat_j \neq \emptyset$, which is provided by Claim 1.
The assumption (i) actually implies that $|Ric|$ is bounded uniformly by $(n-1)e^\beta\alpha^{-1}e^{-1}$.
Hence, by the volume comparison theorem and the disjointness of $B_g(x_i, r_i/3)$, $i=1,2,...$, standard arguments will show that the maximal intersection number $I$ is finite.

\end{proof}

\section{Transfer rate of the Ricci flow}\label{section: transfer rate of the ricci flow}

Recall that the heat kernel on the Euclidean space $\mathbb{R}^n$ is
\[K(x,y,t)=\frac{1}{(4\pi t)^{n/2}}e^{-|x-y|^2/4t}.\]
If the initial data is compactly supported, the heat solution will decay spatially at the approximate rate of $e^{-|x|^2/4t}$ for positive $t$. As a Ricci flow analogue, we prove Theorem \ref{theorem: transfer rate estimate}, which is a consequence of Theorem \ref{theorem: existence of ricci flow on noncompact manifolds} and the curvature estimates we obtained during its proof.

\begin{proof}[Proof of Theorem \ref{theorem: transfer rate estimate}]

We shall verify that under the assumptions of the theorem, we can construct a good cover of the initial manifold that satisfies Assumption \ref{initial good cover assumption}. On $\Omega_0$ the construction is simple since we can cover it by finitely many balls with uniform radius and uniform $K_i$. Therefore we can assume WLOG that $\Omega_0 =\emptyset$.

For any $x \in M$, let's define
\[K_x = 64\bar{r}^2(1+d(x,p))^2, \quad r_x = \frac{\bar{r}}{\sqrt{K_x}}.\]
Then
\[
\begin{split}
\sup_{y \in B(x, 16 r_x)} |Rm|\leq & \sup_{y \in B(x, 16 r_x)} (1+ d(x,p) +d(x,y))^{2} \\
\leq & 2(1+d(x,p))^{2} + 2 (16r_x)^2\\
\leq & 32^{-1} K_x + 2 (\frac{16\bar{r}}{\sqrt{K_x}})^2,
\end{split}
\]
using $K_x \geq 64\bar{r}^2$, note that $\bar{r} > 1$, we get
\[
\sup_{B(x, 16 r_x)} |Rm|\leq  K_x/32 + K_x/8 < K_x.
\]
For a compact domain $\Omega \subset M$, let $\{x_i \in \Omega, i=1,2,...,N\}$ be a maximal set of points in $\Omega$ such that $B(x_i, \frac{1}{3} r_{x_i})$, $i=1,2,...,N$ are disjoint. For convenience, we denote $r_i = r_{x_i}$ and $B_i =B(x,r_i)$.

We claim that $\Omega \subset \cup_{i=1}^N B_i $.
For any pair of points $x,y$ with distance $d(x,y) \leq \frac{1}{4}$, the triangle inequality implies
\[
K_y \leq  64\bar{r}^2(1+d(x,p) + d(x,y))^2 \leq  K_x + 4\bar{r}\sqrt{K_x}+4\bar{r}^2 \leq 2K_x,
\]
hence
\[\frac{1}{\sqrt{2}} r_y \leq r_x \leq \sqrt{2} r_y. \]
If there is a point $y\in \Omega$, and $y \notin \cup_{i=1}^N B_i$. Then $d(x_i,y) > r_i$, for all $i=1,2,...,N$. Since $r_x \leq \frac{1}{8}$ for any $x\in M$, we know $B(y, \frac{1}{3} r_y) \cap B(x_i, \frac{1}{3} r_i) =\emptyset$ when $d(y, x_i) > \frac{1}{4}$. When $d(y, x_i) \leq \frac{1}{4}$, the above analysis shows $\frac{1}{3}r_i + \frac{1}{3}r_y \leq \frac{1+\sqrt{2}}{3} r_i < r_i \leq d(y, x_i)$, so again we have $B(y, \frac{1}{3} r_y) \cap B(x_i, \frac{1}{3} r_i) =\emptyset$. But this contradicts with the maximality of the set $\{x_i \in \Omega, i=1,2,...,N\}$.

Then by working on a compact exhaustion of $M$ and taking subsequential limits, we can construct a good cover of $M$, which satisfies (a) and (b) of Assumption \ref{initial good cover assumption}. It is not hard to observe that
\begin{equation}\label{equation: equivalence of K and distance squared}
\bar{r}^2(1+d(y,p))^2 \leq K_x \leq 2000\bar{r}^2 (1+d(y,p))^2, \quad when \hspace{4pt} d(x,y) \leq 16r_x.
\end{equation}
Hence $(c)$ can be verified by taking $A =\frac{\alpha}{2000\bar{r}^2}$. ($\bar{K}$ will depend on $\Omega_0$ eventually, we can take $\bar{K}=0$ when $\Omega_0=\emptyset$.)

Now let $B_i$ and $B_j$ be adjacent in the sense that $\Bhat_i \cap \Bhat_j \neq \emptyset$, where $\Bhat_i = B(x_i, 16r_i)$. Then
\[d(x_i, x_j) \leq 16 r_i + 16 r_j \leq 4,\]
hence $x_i$ and $x_j$ can be connected by a piecewise geodesic with $15$ pieces, and each piece has length less than $\frac{1}{4}$, therefore
\[d(x_i, x_j) \leq (2+\sqrt{2})^{15} r_i.\]
Then the triangle inequality implies
\[
\begin{split}
K_j \leq & 64\bar{r}^2(1+d(x_i,p) + d(x_i,x_j))^2 \\
\leq & K_i + 16\bar{r} \sqrt{K_i} d(x_i, x_j) + 64\bar{r}^2 d(x_i, x_j)^2 \\
\leq & K_i + 16 (2+\sqrt{2})^{15} \bar{r}^2 + 1024\bar{r}^2.
\end{split}
\]
Let $\Gamma = (16 (2+\sqrt{2})^{15} + 1024)\bar{r}^2$, then we have verified (d) in Assumption \ref{initial good cover assumption}.

Then using the disjointness of $B(x_i, \frac{1}{3}r_i)$, $i=1,2,...$, (e) is verified by applying the volume comparison theorem.

Therefore,  by Theorem \ref{theorem: existence of ricci flow on noncompact manifolds} and Lemma \ref{lemma: barrier estimates of Ric and its derivatives}, there is a solution of the Ricci flow starting with $(M,g)$, satisfying
\[
|\nabla^m Ric|^2(x,t) \leq \sum_{i=1}^\infty \phi_i(x) K_i^{2+m} e^{A\bar{K}+\Lambda K_i t - 2AK_i}, \quad m=0,1,2.
\]
for $(x,t) \in M \times [0,T_0]$, where $T_0 >0$ depends on $\Omega_0, n, \alpha$. Hence
\[
|\nabla^m Ric|^2(x,t) \leq C(n,\Omega_0, \alpha) K_i^{2+m} e^{ - AK_i}, \quad m=0,1,2.
\]
when $0< t \leq \min \{T_0, \frac{A}{\Lambda}\}:=T_1$ and $x\in B_i$.

The proof can be finished  by using (\ref{equation: equivalence of K and distance squared}) and
choosing suitable constants $C_1$ and $C_2$.
\end{proof}

\bibliographystyle{plain}
\bibliography{ref}
\end{document}